\newcommand{\tensor}{\otimes}
\newtheorem{theorem}{Theorem}[section]
\newtheorem{lemma}{Lemma}[section]
\newtheorem{corollary}{Corollary}[section]
\newtheorem{proposition}{Proposition}[section]
\newtheorem*{definition}{Definition}
\newtheorem*{theorem*}{Theorem}
\theoremstyle{definition}
\numberwithin{equation}{subsection}
\newcommand{\ignore}[1]{}
\newcommand{\mynote}[1]{}
\title{\bf{On the Genus number of Algebraic groups}}
\author{ Anirban Bose\\ \small Indian Statistical Institute\\ \small 7, S.J.S. Sansanwal Marg, New Delhi-110016, India
\\ \small email: anirban.math@gmail.com}
\date{}
\begin{document}

\maketitle

\begin{abstract}
We compute the number of orbit types for simply connected simple algebraic groups over algebraically closed fields as well as for 
compact simply connected simple Lie groups. We also compute the number of orbit types for the adjoint action of these groups on their Lie 
algebras. We also prove that the genus number of a connected reductive algebraic group coincides with the genus number of its 
semisimple part.

\end{abstract}
\section{Introduction}
Let $G$ be a group acting on a set $M.$ Let $x\in M$ and $G_x$ denote the 
stabilizer of $x$ in $G$. Two elements $x,y\in M$ are said to have the same orbit type if the orbits of $x$ and $y$ are $G$-isomorphic,
 which is equivalent to saying
 $G_x$ is conjugate to $G_y$ in $G.$
 In the 1950s Mostow 
proved that for a compact Lie group acting on a compact manifold the number of orbit types is finite
\textbf{[M]}, which was initially conjectured by Montgomery (\textbf{[E]}, problem 45). The number of conjugacy
classes of centralizers of elements in a reductive algebraic group $G$ over an algebraically closed field, with char $G$ good, is finite
(\textbf{[St]}, Corollary 1 of Theorem 2, Chapter 3). 
Semisimple conjugacy classes for finite groups of Lie type have been studied by Fleischmann and Carter (see \textbf{[F],[C1]}). 
K. Gongopadhyay and R. Kulkarni have computed the number of conjugacy classes of centralizers in $I(\mathbb{H}^n)$
(the group of isometries of the hyperbolic $n-$space) \textbf{[GK]}. See \textbf{[K]}, 
where the author discusses a related notion of $z-$classes.  
Conjugacy 
classes  of centralizers in anisotropic groups of type $G_2$ over $\mathbb{R},$ have been explicitly calculated by 
A. Singh in \textbf{[S]}.
In this paper we compute the number of orbit types for the action of a 
compact simply connected simple Lie group on itself as well as for a simply connected 
simple algebraic group acting on its semisimple elements by conjugation. We also
compute the number 
of orbit types of the adjoint action of $G$ on its Lie algebra $\mathfrak{g}.$ 
We mainly do this for all classical groups and for $G_2$ and $F_4.$ Conjugacy classes of centralizers play an important role in the study
of characters of finite group of Lie type (see \textbf{[F], [C1]}). It seems natural that, an explicit knowledge of number of conjugacy 
classes of centralizers will be equally important. The remaining cases of exceptional groups will be handled in a sequel.
\section{Preliminaries}
The reader may refer to \textbf{[H2]} for basic results on algebraic groups and \textbf{[BD]} for the theory of compact connected 
Lie groups.

Let $G$ denote a compact simply connected Lie group or a simply connected algebraic group over an algebraically closed field 
and $T\subset G$ be a maximal torus of $G$. Let $W$ be the Weyl group of 
$G$ with respect to $T$, i.e. $W = N_G (T)/ T,$ where $N_G(T)$ denotes the normalizer of $T$ in $G.$ 
Conjugation induces an action of $W$ on $T$. For $x\in T $ let $W_x$ 
denote the stabilizer of $x$ in $W$ for this action i.e. $W_x = \{g\in W \colon gxg^{-1} =x \}$.
The cardinality of the set $\{[Z_G(x)]:x\in G, x\quad \rm{semisimple}\},$ where $Z_G(x)$ is the centralizer of $x$ in $G,$ 
is defined as the \textbf{semisimple genus number} of $G.$ 
Since we shall deal with only semisimple elements, we call this number simply as the \textbf{genus number } of $G.$ If $G$ is 
not simply connected, then the cardinality of the set $\{[Z_G(x)^\circ]:x\in G, x\quad \rm{semisimple}\},$ $Z_G(x)^\circ$ is the 
connected component of $Z_G(x)$ at the identity,
 is called the 
\textbf{connected genus number} of $G.$ 
  The following results are known:

\begin{proposition}{\label{conncent1}}(\textbf{[B]}, Theorem 3.4)
 Let $G$ be a simply connected compact Lie group and $\sigma\in Aut(G).$ Then the set $F$ of all fixed points of $\sigma$ in $G$
is connected. In particular, if $\sigma$ is the inner conjugation by an element $x\in G,$ then the centralizer $Z_G(x)$ is connected.
\end{proposition}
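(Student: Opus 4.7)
My plan is to first settle the ``in particular'' assertion about inner conjugation---equivalently, to prove connectedness of centralizers of (necessarily semisimple) elements---since the root-theoretic analysis in that case is the engine that drives everything, and then to bootstrap the general automorphism statement from it. So fix $x \in G$ and choose a maximal torus $T$ containing $x$, which exists because every element of a compact connected Lie group lies in some maximal torus. Then $T \subseteq Z_G(x)$, and the root-space decomposition of $\mathfrak{g}$ with respect to $T$ shows that the Lie algebra of $Z_G(x)^{\circ}$ is $\mathfrak{t} \oplus \bigoplus_{\alpha(x)=1} \mathfrak{g}_\alpha$; consequently $Z_G(x)^{\circ}$ is generated by $T$ together with the rank-one subgroups attached to the roots $\alpha$ killing $x$, and its Weyl group relative to $T$ is $W' := \langle s_\alpha : \alpha(x) = 1 \rangle$.

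Any $g \in Z_G(x)$ conjugates $T$ to another maximal torus of $Z_G(x)^{\circ}$, so after multiplying by some element of $Z_G(x)^{\circ}$ we may take $g \in N_G(T) \cap Z_G(x)$; this gives an injection $Z_G(x)/Z_G(x)^{\circ} \hookrightarrow W_x/W'$, where $W_x$ is the $W$-stabilizer of $x$. Thus everything hinges on showing $W_x = W'$, and this is exactly where simple-connectedness enters. Writing $T = \mathfrak{t}/\Lambda$, the hypothesis forces $\Lambda$ to coincide with the coroot lattice $Q^\vee$; a theorem of Steinberg then asserts that any $w \in W$ fixing $x$ is a product of reflections $s_\alpha$ with $\alpha(x) = 1$. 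The proof of this lattice-theoretic fact---which I regard as the one genuinely ``hard'' step---proceeds by lifting $x$ to some $\tilde{x} \in \mathfrak{t}$, rephrasing the question as a statement about the affine Weyl group $W \ltimes Q^\vee$, and observing that the stabilizer of any point in the fundamental alcove is generated by the reflections in the walls through it.

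For a general $\sigma \in \Aut(G)$, the cleanest route is to reduce to the inner case inside an auxiliary group. The closure $A = \overline{\langle \sigma \rangle} \subseteq \Aut(G)$ is a compact abelian Lie group, so $H := G \rtimes A$ is a compact Lie group in which $F = Z_H(\sigma) \cap G$. The obstacle here is that $H$ need not be connected nor simply connected, so the previous argument does not apply verbatim; one gets around this by choosing a $\sigma$-stable maximal torus of $G$ (produced by a fixed-point argument for $A$ acting on the compact homogeneous space of maximal tori) and running the root-theoretic analysis equivariantly with respect to $\sigma$, via the $\sigma$-twisted analogue of Steinberg's theorem. The ultimate input remains the same---the cocharacter lattice of $T$ equals the coroot lattice---so simple-connectedness of $G$ once again closes the argument and forces $F$ to equal its own identity component.
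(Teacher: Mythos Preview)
The paper does not supply its own proof of this proposition; it is quoted verbatim as Theorem~3.4 of Borel's paper \textbf{[B]} and used as a black box. So there is no in-paper argument to compare your proposal against.

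That said, your treatment of the inner case is the standard Steinberg-type argument and is correct in substance: the crux is exactly that for a simply connected group the stabilizer $W_x$ coincides with the reflection subgroup $\langle s_\alpha : \alpha(x)=1\rangle$, and your reduction via the affine Weyl group and the fundamental alcove is the usual route to that fact. Indeed, the paper itself reproves the implication $w_\alpha\in W_x \Rightarrow \alpha(x)=1$ in Lemma~\ref{root} for the algebraic-group setting, which is morally the same content.

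Your bootstrap to an arbitrary automorphism $\sigma$ is considerably sketchier. Forming $H=G\rtimes A$ and appealing to a ``$\sigma$-twisted analogue of Steinberg's theorem'' is a legitimate strategy, but you have not actually carried it out: producing a $\sigma$-stable maximal torus, identifying the folded root system and the relative Weyl group, and proving the twisted stabilizer statement all require genuine work that your final paragraph only names. As written, the general case is an outline rather than a proof; for the purposes of this paper that is harmless, since the result is only invoked for inner automorphisms (centralizers), and your argument for that case is complete.
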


\begin{proposition}{\label{conncent2}}(\textbf{[H1]}, Theorem 2.11)
 If $G$ be a simply connected algebraic group over an algebraically closed field, the centralizer of any semisimple element of $G$ is
connected.
\end{proposition}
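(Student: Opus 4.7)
The plan is to show that the component group $Z_G(x)/Z_G(x)^{\circ}$ is trivial. Since $x$ is semisimple it lies in some maximal torus $T$ of $G$, and I would fix such a $T$; automatically $T\subseteq Z_G(x)$.

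First I would identify the connected centralizer via the root space decomposition. The Lie algebra of $Z_G(x)^{\circ}$ is $\mathfrak{t}\oplus\bigoplus_{\alpha(x)=1}\mathfrak{g}_{\alpha}$, and correspondingly $Z_G(x)^{\circ}$ is the subgroup generated by $T$ together with the root subgroups $U_{\alpha}$ for the roots $\alpha$ satisfying $\alpha(x)=1$. This is a connected reductive subgroup whose Weyl group with respect to $T$ is the reflection subgroup $W_x':=\langle s_{\alpha}:\alpha(x)=1\rangle$.

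Next I would reduce the connectivity question to a statement about $W$. Given any $g\in Z_G(x)$, both $T$ and $gTg^{-1}$ are maximal tori of the connected reductive group $Z_G(x)^{\circ}$, so by conjugacy of maximal tori in $Z_G(x)^{\circ}$ there is $h\in Z_G(x)^{\circ}$ with $hg\in N_G(T)\cap Z_G(x)$. Writing $W_x:=\{w\in W:w\cdot x=x\}$, a short identification then gives
\[
Z_G(x)/Z_G(x)^{\circ} \;\cong\; W_x / W_x'.
\]

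The main obstacle, and the step where simple connectivity enters decisively, is proving $W_x = W_x'$. I would invoke Steinberg's theorem on Weyl-group isotropy: for $G$ simply connected, the cocharacter lattice $X_{*}(T)$ coincides with the coweight lattice $P^{\vee}$, and a standard argument with the affine Weyl group acting on $X_{*}(T)\otimes\mathbb{R}$ shows that the $W$-stabilizer of any point of $T$ is generated by the reflections at roots vanishing on it. Simple connectivity is essential at this step; in adjoint types one sees explicit counterexamples (for instance certain regular elements of $\mathrm{PGL}_n$ whose centralizers have a cyclic component group), so the proof cannot avoid some input reflecting the structure of the fundamental group, which the hypothesis trivializes.
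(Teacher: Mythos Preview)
The paper does not prove this proposition at all; it is quoted from \textbf{[H1]} (Humphreys, Theorem~2.11) and used as a black box. Your outline is essentially the standard Steinberg argument behind that reference, and the reduction $Z_G(x)/Z_G(x)^{\circ}\cong W_x/W_x'$ via conjugacy of maximal tori in $Z_G(x)^{\circ}$ is correct.

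There is, however, a genuine slip at the decisive step. For $G$ simply connected the cocharacter lattice $X_*(T)$ is the \emph{coroot} lattice $Q^{\vee}=\mathbb{Z}\Phi^{\vee}$, not the coweight lattice $P^{\vee}$; it is the adjoint group whose cocharacter lattice is $P^{\vee}$. This is not cosmetic: the whole point is that $W\ltimes Q^{\vee}$ is the affine Weyl group, a genuine Coxeter group, so point-stabilizers are parabolic and hence generated by the affine reflections through the point, which project to the $s_\alpha$ with $\alpha(x)=1$. With $P^{\vee}$ in place of $Q^{\vee}$ one lands in the \emph{extended} affine Weyl group, which is not Coxeter, and the conclusion fails --- exactly the phenomenon your $\mathrm{PGL}_n$ example illustrates. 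So your identification of where simple connectivity enters is right, but the lattice is misnamed, and as written the sentence asserts the opposite of what you need. A second caveat: the picture of $W_{\mathrm{aff}}$ acting on $X_*(T)\otimes\mathbb{R}$ is literally correct for compact groups and in characteristic zero; over an algebraically closed field of arbitrary characteristic the argument has to be rephrased (as in \textbf{[SSt]}, II~\S3--4, or Steinberg's \emph{Endomorphisms}), though the outcome $W_x=W_x'$ is the same.
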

For a compact connected Lie group $G$ with maximal torus $T$ and Weyl group $W,$ define the following subsets with respect to a reflection
$s\in W:$
$T^{s}$ is the the subset of $T$ fixed by the action of $s\in W$ and $(T^{s})^\circ$ is the connected component at the identity of $T^{s}.$
Let $K(s)=\{x^2\in T | x\in N_G(T),\quad xT=s\in W\}$ and $\sigma(s)=(T^{s})^\circ\cup K(s).$ Then we have,  

\begin{proposition}{\label{centre}}(\textbf{[DW]}, Theorem 8.2)
 Suppose that $G$ is a compact connected Lie group with maximal torus $T$ and Weyl group $W.$ Then the centre of $G$ is equal to the 
intersection $\bigcap_{s}\sigma(s),$ where $s$ runs through the reflections in $W.$
\end{proposition}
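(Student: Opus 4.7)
The plan is to reduce to a rank-one analysis by associating, to each reflection $s = s_\alpha$, the Levi subgroup $L_\alpha := Z_G(T_\alpha)$, where $T_\alpha := (\ker\alpha)^\circ$ is the codimension-one subtorus of $T$ fixed pointwise by $s_\alpha$. Since $G$ is compact and connected, the centralizer of a torus is connected, so $L_\alpha$ is a connected reductive subgroup containing $T$ as a maximal torus, with root system $\{\pm\alpha\}$. I would then invoke the identity $Z(G) = \bigcap_\alpha Z(L_\alpha)$: the containment $\subseteq$ is immediate, and conversely any element of $T$ centralizing every $L_\alpha$ commutes with $T$ and with each rank-one semisimple subgroup $H_\alpha \subseteq L_\alpha$, and these together generate $G$. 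So it suffices to prove $\sigma(s_\alpha) = Z(L_\alpha)$ for every root $\alpha$.

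For the first summand of $\sigma(s_\alpha)$ one has $(T^{s_\alpha})^\circ = T_\alpha$, since both are connected subtori of $T$ of codimension one with $T_\alpha \subseteq T^{s_\alpha}$. For the target, I would use the decomposition $L_\alpha = T_\alpha \cdot H_\alpha$, in which $H_\alpha$ is a semisimple rank-one subgroup, necessarily isomorphic to $SU(2)$ or $SO(3)$; since $T_\alpha$ is central in $L_\alpha$, this gives $Z(L_\alpha) = T_\alpha \cdot Z(H_\alpha)$. Moreover $Z(H_\alpha)$ is cyclic, generated by $w_\alpha^2$ for any representative $w_\alpha \in H_\alpha \cap N_G(T)$ of $s_\alpha$ (equal to $-I$ in the $SU(2)$ case and to $1$ in the $SO(3)$ case).

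The crucial computation is then to determine $K(s_\alpha)$. Any representative of $s_\alpha$ in $N_G(T)$ has the form $w_\alpha t$ with $t \in T$. Writing $t = ab$ with $a \in T_\alpha$ and $b \in \alpha^\vee(S^1)$, one has $s_\alpha(a) = a$ and $s_\alpha(b) = b^{-1}$, hence
\[
(w_\alpha t)^2 \;=\; (w_\alpha t w_\alpha^{-1})\cdot w_\alpha^2 \cdot t \;=\; s_\alpha(t)\, t\, w_\alpha^2 \;=\; a^2\, w_\alpha^2.
\]
As $t$ varies over $T$ the element $a$ varies over $T_\alpha$, and since squaring is surjective on the torus $T_\alpha$, this yields $K(s_\alpha) = T_\alpha \cdot w_\alpha^2$, whence
\[
\sigma(s_\alpha) \;=\; T_\alpha \,\cup\, T_\alpha\, w_\alpha^2 \;=\; T_\alpha \cdot Z(H_\alpha) \;=\; Z(L_\alpha).
\]
Intersecting over the reflections produces $\bigcap_s \sigma(s) = \bigcap_\alpha Z(L_\alpha) = Z(G)$.

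The main obstacle is the rank-one bookkeeping: identifying $H_\alpha$ correctly (as $SU(2)$ or $SO(3)$, according to whether the cocharacter $\alpha^\vee$ is primitive in $X_*(T)$) and verifying that $w_\alpha^2$ generates $Z(H_\alpha)$ in each case. Once the decomposition $t = ab$ and the choice of $w_\alpha \in H_\alpha$ are fixed, the remaining calculation is formal.
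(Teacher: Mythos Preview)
The paper does not supply a proof of this proposition: it is quoted verbatim from \textbf{[DW]}, Theorem~8.2, and used as a black box in the injectivity argument for Theorem~\ref{genus}. There is therefore no in-paper proof to compare your attempt against.

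That said, your reconstruction is correct and is essentially the rank-one reduction underlying the Dwyer--Wilkerson result. The three identifications you make are all sound: $(T^{s_\alpha})^\circ = T_\alpha$ because both are codimension-one subtori with one contained in the other; the computation $(w_\alpha t)^2 = s_\alpha(t)\,t\,w_\alpha^2 = a^2 w_\alpha^2$ together with surjectivity of squaring on $T_\alpha$ gives $K(s_\alpha) = T_\alpha\,w_\alpha^2$; and the decomposition $L_\alpha = T_\alpha \cdot H_\alpha$ with $T_\alpha$ central yields $Z(L_\alpha) = T_\alpha \cdot Z(H_\alpha)$. The rank-one fact that $w_\alpha^2$ generates $Z(H_\alpha)$ is independent of the choice of representative $w_\alpha \in H_\alpha \cap N_G(T)$, since any two such representatives differ by an element $s$ of the maximal torus of $H_\alpha$ and $(w_\alpha s)^2 = s_\alpha(s)\,s\,w_\alpha^2 = w_\alpha^2$. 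The global step $Z(G) = \bigcap_\alpha Z(L_\alpha)$ is justified exactly as you indicate: each $Z(L_\alpha)$ lies in $T$, and an element of $T$ centralizing every $H_\alpha$ centralizes $\langle T, H_\alpha : \alpha \in \Phi\rangle = G$. One small point worth making explicit is that even when $w_\alpha^2$ happens to lie in $T_\alpha$ (which can occur), both sides of $\sigma(s_\alpha) = T_\alpha \cdot Z(H_\alpha)$ collapse to $T_\alpha$, so the identity survives.
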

 We have the following basic result:
\begin{theorem}{\label{genus}} For a simply connected compact Lie group $G$ with maximal torus $T$ and Weyl group $W,$
there exists a bijection 
$$
\{ [Z_G (x)]\colon x\in T\}\longrightarrow \{ [W_x]\colon x\in T\}
$$
 given by
$$
[Z_G (x)]\longmapsto[W_x]
$$
Here $[Z_G (x)]$ and $[W_x]$ respectively denote the conjugacy class of the centralizer of $x$ in $G$ and the conjugacy class of the 
stabilizer of $x$ in $W$.
\end{theorem}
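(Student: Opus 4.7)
The key observation is that for $x\in T$, the torus $T$ lies in $Z_G(x)$ and is maximal there, and by Proposition \ref{conncent1} the centralizer $Z_G(x)$ is connected. Hence $N_{Z_G(x)}(T)=N_G(T)\cap Z_G(x)$, and the Weyl group of $Z_G(x)$ with respect to $T$ equals $N_{Z_G(x)}(T)/T=W_x$. Since both sides of the purported bijection are quotients of $T$ and the map is induced by the identity on $T$, what really needs to be shown is that $[Z_G(x)]=[Z_G(y)]$ if and only if $[W_x]=[W_y]$; surjectivity is then automatic.

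For the forward implication, assume $Z_G(y)=gZ_G(x)g^{-1}$ for some $g\in G$. Both $T$ and $gTg^{-1}$ are maximal tori of the connected compact group $Z_G(y)$, so there is $h\in Z_G(y)$ with $(hg)T(hg)^{-1}=T$. Set $n:=hg\in N_G(T)$; then $nZ_G(x)n^{-1}=hZ_G(y)h^{-1}=Z_G(y)$. Conjugation by $n$ preserves $N_G(T)$ and carries $Z_G(x)$ to $Z_G(y)$, hence carries $N_{Z_G(x)}(T)=N_G(T)\cap Z_G(x)$ to $N_{Z_G(y)}(T)$, yielding $\bar n W_x\bar n^{-1}=W_y$ in $W$.

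For the reverse implication, assume $W_y=\bar n W_x\bar n^{-1}$ for some $n\in N_G(T)$. A direct check shows $\bar n W_x\bar n^{-1}=W_{nxn^{-1}}$, so after replacing $x$ by $nxn^{-1}$ (which only modifies $Z_G(x)$ by an inner automorphism of $G$) we reduce to $W_x=W_y$. Since $Z_G(x)$ is a connected compact Lie group with maximal torus $T$, its Weyl group $W_x$ is a true reflection group: it is generated by the reflections it contains, and these reflections are precisely $\{s_\alpha:\alpha(x)=1\}$. Hence $W_x$ determines the subroot system $R_x=\{\alpha\in R:\alpha(x)=1\}$, and $Z_G(x)$ is in turn generated by $T$ together with the root $SU(2)$-subgroups $G_\alpha$ for $\alpha\in R_x$. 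Thus $W_x=W_y$ forces $R_x=R_y$ and so $Z_G(x)=Z_G(y)$.

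The step I expect to be the main obstacle is the injectivity, because it rests on recovering the root data of $Z_G(x)$ from the abstract subgroup $W_x\subseteq W$. This recovery uses critically that $W_x$ coincides with the reflection subgroup $\langle s_\alpha:\alpha(x)=1\rangle$, which in turn requires $Z_G(x)$ to be connected; without the simple-connectedness hypothesis (and Proposition \ref{conncent1}), $W_x$ could strictly contain the reflection subgroup and the argument would collapse.
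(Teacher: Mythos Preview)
Your proof is correct. The forward implication (well-definedness) matches the paper's argument essentially verbatim. For injectivity, however, you and the paper take genuinely different routes.

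The paper argues via Proposition~\ref{centre} (the Dwyer--Wilkerson description of the centre as $\bigcap_s\sigma(s)$): from $W_a=W_y$ it deduces $Z(Z_G(a))=Z(Z_G(y))$, and then uses that a connected centralizer is the union of the maximal tori it contains to conclude $Z_G(a)=Z_G(y)$. No root-system analysis appears.

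You instead recover the subroot system $R_x=\{\alpha:\alpha(x)=1\}$ directly from the reflections lying in $W_x$, using that $W_x$ is the Weyl group of the connected group $Z_G(x)$ (hence a genuine reflection group) and that in the simply connected case $s_\alpha\in W_x$ forces $\alpha(x)=1$. This is precisely the strategy the paper adopts for the \emph{algebraic} analogue (Theorem~\ref{genus2}), where Lemma~\ref{root} plays the role of your implication $s_\alpha\in W_x\Rightarrow\alpha(x)=1$, and Proposition~\ref{centralizer1} supplies the description of $Z_G(x)$ in terms of $T$ and the root subgroups. So your argument unifies the two cases under a single mechanism, at the cost of invoking (implicitly) the compact counterpart of Lemma~\ref{root}; the paper's proof of Theorem~\ref{genus} avoids roots entirely but requires the somewhat less standard Proposition~\ref{centre}.
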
 

\begin{proof}First we show that the map is well-defined.
\newline
Let $x,y \in T$ such that $[Z_G (x)]=[Z_G (y)]$ i.e. there exists some $g\in G $ such that $gZ_G (x)g^{-1} =Z_G (y)$. Since $T$ is 
a maximal torus in $Z_G (x)$ containing $x$, $gTg^{-1}\subset Z_G (y)$ and also $T\subset Z_G (y).$ Hence there exists
 $g_1 \in Z_G (y)$ such that $g_1gTg^{-1}g_1^{-1} = T$. Let $g_1g = h \in G$. Then $[h]=hT \in W$ and
 $[h]W_x[h^{-1}] =W_y$
since, for $[h_1] \in W_x$ we have
\begin{align*}
 (hh_1h^{-1})y(hh_1^{-1}h^{-1})
=&(g_1gh_1g^{-1}g_1^{-1})y(g_1gh_1^{-1}g^{-1}g_1^{-1})\\
=&(g_1(gh_1g^{-1})g_1^{-1})y(g_1(gh_1^{-1}g^{-1})g_1^{-1})\\
=&y,
\end{align*}
since $h_1\in Z_G(x)$ and $gZ_G(x)g^{-1}=Z_G(y).$ Hence $gh_1g^{-1}\in Z_G(y).$ Also, $g_1\in Z_G(y).$
Therefore $[hh_1h^{-1}]\in W_y.$
Similarly we have the other inclusion. Thus the given map is well defined.

Surjectivity of the map is clear from the definition. Hence we only need to check injectivity.

Let $x,y\in T$ such that $W_x$ is conjugate to $W_y$, i.e. for some $ [h]\in W,$  $[h]W_x[h^{-1}] = W_y,$ i.e. 
$W_{hxh^{-1}}=W_y,$ where $h\in N_G(T)$ is a representative of $[h]\in W.$ We denote $hxh^{-1}\in T$ by $a.$ We intend to show that
$Z_G(a)=Z_G(y).$ Clearly for any element $x\in T,$ $W_x=N_{Z_G(x)}(T)/T.$ Therefore by Proposition \ref{centre}, 
$Z(Z_G(a))=\bigcap_{s\in W_a}\sigma(s) $ and $Z(Z_G(y))=\bigcap_{s\in W_y}\sigma(s).$ Since $W_a=W_y,$ we have 
$$Z(Z_G(a))=Z(Z_G(y))..........(\ast).$$
Observe that for any $x\in T,$ $Z_G(x)$ is the union of all maximal tori of $G$ containing $x.$ So let $T_1$ be any maximal torus
in $Z_G(a).$ Since $y\in Z(Z_G(a))$ by $(\ast),$ $y\in T_1, $ which implies $T_1\subset Z_G(y).$ Similarly any maximal torus of $Z_G(y)$
is contained in $Z_G(a).$ Therefore $Z_G(y)=Z_G(a)=Z_G(hxh^{-1})=hZ_G(x)h^{-1}.$ This shows that the map is injective. 
                                                                                   
\end{proof}
Next we prove an analogue of Theorem \ref{genus} for simply connected algebraic groups over an algebraically closed field. But before 
that, we note the following results:
\begin{proposition}{\label{centralizer1}}(\textbf{[C2]}, Theorems 3.5.3 and 3.5.4)
Let G be a connected reductive algebraic group, with maximal torus $T,$ Weyl group $W$ and root system $\Phi,$
then, for a semisimple element $x\in G,$ $Z_G(x)^\circ$ is  a reductive group and
\newline
$Z_G(x)^\circ=<T,U_{\alpha}, \alpha(x)=1>,$ where $\alpha\in \Phi$ and $U_\alpha$ is the root subgroup corresponding to $\alpha.$
\newline
The root system of $Z_G(x)^\circ$ is $\Phi_1=\{\alpha\in \Phi |\alpha(x)=1\}.$
\newline
The Weyl group of $Z_G(x)^\circ$ is $W_1=<w_{\alpha} |\alpha\in \Phi_1>,$ where $w_{\alpha}$ is the reflection at $\alpha.$
\end{proposition}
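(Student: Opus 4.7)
The plan is to reduce everything to a computation inside a maximal torus and then compare Lie algebras. Since every semisimple element of $G$ lies in some maximal torus and all maximal tori are conjugate, I would first replace $x$ by a conjugate so that $x\in T$; this loses no generality because the assertion is about $Z_G(x)^\circ$ up to isomorphism of structure. With $x\in T$, the inclusion $T\subseteq Z_G(x)$ is immediate from commutativity of $T$, and for each root $\alpha$ with $\alpha(x)=1$ the identity
$$x\, u_\alpha(s)\, x^{-1}=u_\alpha(\alpha(x)\, s)$$
shows $U_\alpha\subseteq Z_G(x)$. Hence the closed connected subgroup
$$H:=\langle T,\; U_\alpha : \alpha\in\Phi,\;\alpha(x)=1\rangle$$
sits inside $Z_G(x)^\circ$.

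The second step is to prove the reverse inclusion by a Lie algebra argument. The differential of conjugation by $x$ is $\textup{Ad}(x)$, and because $x$ is semisimple, $\textup{Ad}(x)$ is a semisimple automorphism of $\mathfrak g$ acting on the root space decomposition $\mathfrak g=\mathfrak t\oplus\bigoplus_{\alpha\in\Phi}\mathfrak g_\alpha$ by the scalar $\alpha(x)$ on $\mathfrak g_\alpha$ and trivially on $\mathfrak t$. Semisimplicity ensures that the morphism $g\mapsto xgx^{-1}g^{-1}$ is separable at the identity, so
$$\textup{Lie}(Z_G(x))=\ker(\textup{Ad}(x)-1)=\mathfrak t\oplus\bigoplus_{\alpha(x)=1}\mathfrak g_\alpha,$$
which is visibly $\textup{Lie}(H)$. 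Since $H$ and $Z_G(x)^\circ$ are both connected closed subgroups of $G$ with the same Lie algebra, they coincide.

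Once $Z_G(x)^\circ=H$ is established, the remaining assertions are consequences of the general theory of subgroups of reductive groups generated by a maximal torus and a collection of root subgroups: the set $\Phi_1=\{\alpha\in\Phi:\alpha(x)=1\}$ is closed under the reflections $w_\alpha$ for $\alpha\in\Phi_1$ (because these reflections act on $X^*(T)$ by $\mu\mapsto\mu-\langle\mu,\alpha^\vee\rangle\alpha$, and evaluation at $x$ sends $\alpha$ to $1$), so $\Phi_1$ is itself a root system in the subspace of $X^*(T)\otimes\mathbb{R}$ it spans. The subgroup $H$ then has $T$ as a maximal torus, $\Phi_1$ as root system, and is reductive because it admits a Bruhat-type decomposition coming from $\Phi_1$ with no positive-dimensional unipotent radical; its Weyl group $N_H(T)/T$ is generated by the reflections $w_\alpha$, $\alpha\in\Phi_1$.

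The main obstacle I anticipate is the separability input that justifies $\textup{Lie}(Z_G(x))=\ker(\textup{Ad}(x)-1)$ in arbitrary characteristic. This step uses crucially that $x$ is semisimple, so that $\textup{Ad}(x)-1$ acts semisimply on $\mathfrak g$ and the scheme-theoretic fixed points of the inner automorphism agree with $Z_G(x)$ as a reduced subgroup; in positive characteristic one must either invoke Steinberg's fixed-point theorem for semisimple automorphisms or argue directly via the slice representation. Everything else is either the root space calculation above or a standard recognition theorem for reductive subgroups of maximal rank, so this Lie-algebra step is where the real content lies.
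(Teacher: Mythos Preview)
The paper does not give its own proof of this proposition; it is quoted verbatim as a known result from Carter \textbf{[C2]}, Theorems~3.5.3 and~3.5.4, and is used as a black box in the proof of Theorem~\ref{genus2}. So there is nothing in the paper to compare your argument against.

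That said, your outline is essentially the standard proof one finds in the literature (Carter, Borel, Springer). The inclusion $H\subseteq Z_G(x)^\circ$ via $x\,u_\alpha(s)\,x^{-1}=u_\alpha(\alpha(x)s)$ is correct, and the Lie algebra computation $\mathrm{Lie}(Z_G(x))=\mathfrak t\oplus\bigoplus_{\alpha(x)=1}\mathfrak g_\alpha$ is the heart of the matter. You are right that the separability input $\mathrm{Lie}(Z_G(x))=\ker(\mathrm{Ad}(x)-1)$ is exactly where semisimplicity of $x$ is used and where care is needed in positive characteristic; this is the content of, e.g., Borel's \emph{Linear Algebraic Groups}, 9.1. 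Once you have $H\subseteq Z_G(x)^\circ$ and equal Lie algebras, equality of the groups follows by dimension.

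The one place your sketch is thin is reductivity of $H$: saying it ``admits a Bruhat-type decomposition with no positive-dimensional unipotent radical'' is not quite an argument. A clean way to finish: observe that $\Phi_1=-\Phi_1$ since $(-\alpha)(x)=\alpha(x)^{-1}=1$. The unipotent radical $R_u(H)$ is connected, $T$-stable, and unipotent, hence its Lie algebra is a sum of root spaces $\mathfrak g_\beta$ with $\beta\in\Phi_1$; but then $\mathfrak g_{-\beta}\subseteq\mathrm{Lie}(H)$ as well, and $\langle U_\beta,U_{-\beta}\rangle$ is a reductive rank-one group, forcing $\beta\notin$ the roots of $R_u(H)$. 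Thus $R_u(H)=1$. With that addition your proof is complete.
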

\begin{lemma}{\label{root}}
 Let $G$ be a simply connected algebraic group with maximal torus $T$ and Weyl group $W.$ If $w_\alpha$ be a reflection in $W,$ such
that $w_\alpha\in W_x,$ where
$x\in T$ and $\alpha\in \Phi,$ the root system of $G,$ then $\alpha(x)=1.$ 
\end{lemma}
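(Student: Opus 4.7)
The plan is to translate the condition $w_\alpha\in W_x$ into an explicit equation in $T$ via the standard formula for the action of a reflection on a torus, and then use simple connectedness to cancel the coroot.

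First I would write down the formula
$$w_\alpha(t) \;=\; t\cdot \alpha^\vee(\alpha(t))^{-1} \quad (t\in T),$$
where $\alpha^\vee\colon\mathbb{G}_m\to T$ is the coroot associated with $\alpha$. This can be checked by evaluating an arbitrary $\chi\in X^*(T)$ on both sides, using the definition $w_\alpha\chi=\chi-\langle\chi,\alpha^\vee\rangle\alpha$ together with the identity $\chi(\alpha^\vee(a))=a^{\langle\chi,\alpha^\vee\rangle}$. Specialising to $t=x$ and using the assumption $w_\alpha(x)=x$ immediately gives $\alpha^\vee(\alpha(x))=1$ in $T$.

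The remaining task is to show that the coroot map $\alpha^\vee\colon \mathbb{G}_m\to T$ is injective, for then $\alpha^\vee(\alpha(x))=1$ forces $\alpha(x)=1$. The kernel of $\alpha^\vee$ is cut out by the equations $a^{\langle\chi,\alpha^\vee\rangle}=1$ for $\chi\in X^*(T)$, so injectivity is equivalent to the integers $\{\langle\chi,\alpha^\vee\rangle:\chi\in X^*(T)\}$ generating all of $\mathbb{Z}$. This is exactly where the hypothesis of simple connectedness is used: it is equivalent to $X^*(T)$ being the full weight lattice $P$. Since every root is Weyl-conjugate to a simple root $\alpha_i$, and the fundamental weight $\omega_i$ pairs to $1$ with $\alpha_i^\vee$, some character pairs to $1$ with $\alpha^\vee$ as well, giving injectivity.

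The only delicate point is this last step; indeed the conclusion genuinely fails outside the simply connected setting (for instance, in $\mathrm{PGL}_2$ the non-identity element of order two in the maximal torus is fixed by the non-trivial reflection, yet the unique positive root does not evaluate to $1$ on it). So the structure of the proof is: kinematics of reflections on $T$ to reduce the statement to injectivity of $\alpha^\vee$, then simple connectedness to secure that injectivity.
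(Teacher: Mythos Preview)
Your proof is correct and essentially the same as the paper's: both hinge on the reflection formula together with the existence of a character pairing to $1$ with $\alpha^\vee$, namely the fundamental weight dual to a simple root Weyl-conjugate to $\alpha$, which exists precisely because $G$ is simply connected. The only difference is packaging---you first isolate $\alpha^\vee(\alpha(x))=1$ and then phrase the remaining step as injectivity of $\alpha^\vee$, whereas the paper conjugates $\alpha$ to a simple root at the outset and applies the corresponding fundamental weight directly to the fixed-point equation.
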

\begin{proof}
 Let $(X(T),\Phi,Y(T),\Phi^\ast)$ be the root datum for $G.$ Since $G$ is simply connected, \\
$X(T)=Hom(\mathbb{Z}\Phi^\ast, \mathbb{Z})$
and $Y(T)=\mathbb{Z}\Phi^\ast.$ Therefore for a system of simple roots $\{\alpha_i\}$ of $G,$ there exists a basis
 $\{\lambda_j\}$ of $X(T)$ such
that $<\lambda_i, \alpha_j^\ast>=\delta_{ij},$ $\alpha_j^\ast$ being the coroot corresponding to $\alpha_j$ (
see \textbf{[SSt]}, Chapter 2, Section 2.) 

Now let $w_\alpha \in W$ be a reflection such that, $w_\alpha\in W_x,$ i.e. $w_\alpha(x)=x.$ 
There exists $s\in W$ such that $s(\alpha)$ is a simple root. Consider $\lambda \in X(T)$ such that $<\lambda, s(\alpha)^\ast>=1.$ 
Note that,
$$w_{s(\alpha)}(s(x))=sw_\alpha s^{-1}(s(x))=sw_\alpha(x)=s(x)..........(1).$$
Applying $\lambda$ to equation ($1$) we get,
\begin{align*}
 &\lambda (w_{s(\alpha)}(s(x)))=\lambda(s(x))\\
\Rightarrow&(w_{s(\alpha)}\lambda)(s(x))=\lambda(s(x))\\
\Rightarrow&(\lambda-<\lambda,s(\alpha)^\ast>s(\alpha))(s(x))=\lambda(s(x))\\
\Rightarrow&\lambda(s(x))s(\alpha)(s(x))^{-1}=\lambda(s(x))\\
\Rightarrow&s(\alpha)(s(x))=1\\
\Rightarrow&\alpha(s^{-1}(s(x))=1\\
\Rightarrow&\alpha(x)=1.
\end{align*}

\end{proof}

\begin{theorem}{\label{genus2}}
For simply connected algebraic group $G$ over an algebraically closed field, with maximal torus $T$ and Weyl group $W,$
there exists a bijection 
$$
\{ [Z_G (x)]\colon x\in T\}\longrightarrow \{ [W_x]\colon x\in T\}
$$
 given by
$$
[Z_G (x)]\longmapsto[W_x]
$$
Here $[Z_G (x)]$ and $[W_x]$ respectively denote the conjugacy class of the centralizer of $x$ in $G$ and the conjugacy class of the 
stabilizer of $x$ in $W$.
\end{theorem}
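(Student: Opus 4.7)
The plan is to adapt the proof of Theorem \ref{genus} to the algebraic setting, with the key difference that we will use Proposition \ref{centralizer1} and Lemma \ref{root} in place of Proposition \ref{centre}, which is not available in this generality.

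Well-definedness and surjectivity should go through essentially verbatim. Given $g Z_G(x) g^{-1} = Z_G(y)$, both $T$ and $g T g^{-1}$ are maximal tori of $Z_G(y)$, which is connected and reductive by Propositions \ref{conncent2} and \ref{centralizer1}. Hence there exists $g_1 \in Z_G(y)$ with $g_1 g T (g_1 g)^{-1} = T$, and $h := g_1 g$ represents an element of $W$ that conjugates $W_x$ to $W_y$ by the same calculation as in Theorem \ref{genus}. Surjectivity is immediate from the definition of the map.

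For injectivity, I would first reduce to the case $W_x = W_y$ by replacing $x$ with $h x h^{-1}$ for a representative $h \in N_G(T)$ of an element of $W$ conjugating $W_x$ to $W_y$. The strategy is then to recover $\Phi_1(x) := \{\alpha \in \Phi : \alpha(x) = 1\}$ directly from $W_x$ via the equality $\Phi_1(x) = \{\alpha \in \Phi : w_\alpha \in W_x\}$. The inclusion ``$\supseteq$'' is precisely Lemma \ref{root}. For ``$\subseteq$'', Proposition \ref{centralizer1} shows that if $\alpha(x) = 1$ then $w_\alpha$ lies in the Weyl group $N_{Z_G(x)^\circ}(T)/T$ of $Z_G(x)^\circ$, and any representative may be chosen in $Z_G(x)^\circ \subset Z_G(x)$, so $w_\alpha$ fixes $x$. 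Granting this identification, $W_x = W_y$ forces $\Phi_1(x) = \Phi_1(y)$, and then Proposition \ref{centralizer1} together with Proposition \ref{conncent2} yields $Z_G(x) = Z_G(x)^\circ = \langle T, U_\alpha : \alpha \in \Phi_1(x)\rangle = \langle T, U_\alpha : \alpha \in \Phi_1(y)\rangle = Z_G(y)^\circ = Z_G(y)$.

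The main technical ingredient is Lemma \ref{root}, which plays the role that Proposition \ref{centre} played in the compact setting: it guarantees that the reflections lying in $W_x$ already force the corresponding roots to vanish on $x$, so that $W_x$ remembers the whole root subsystem $\Phi_1(x)$. With that lemma and the structural Propositions \ref{conncent2} and \ref{centralizer1} in hand, the proof is essentially a recombination of known facts about centralizers in reductive groups.
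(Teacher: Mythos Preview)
Your proposal is correct and follows essentially the same approach as the paper: well-definedness and surjectivity are handled exactly as in Theorem~\ref{genus}, and for injectivity you reduce to $W_x=W_y$, use Lemma~\ref{root} together with Proposition~\ref{centralizer1} to show that $W_x$ determines the root subsystem $\Phi_1(x)=\{\alpha:\alpha(x)=1\}$, and then conclude $Z_G(x)=Z_G(y)$ via Propositions~\ref{conncent2} and~\ref{centralizer1}. The only cosmetic difference is that you phrase the key step as the set-theoretic equality $\Phi_1(x)=\{\alpha:w_\alpha\in W_x\}$, whereas the paper argues the two inclusions $\Phi_a\subset\Phi_y$ and $\Phi_y\subset\Phi_a$ directly.
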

\begin{proof}
 The proof of well-definedness and surjectivity of the map is same as that in Theorem \ref{genus}. We prove that this map is injective.

Let $x,y\in T$ such that $W_x$ is conjugate to $W_y$, i.e. for some $ [h]\in W,$  $[h]W_x[h^{-1}] = W_y,$ i.e. 
$W_{hxh^{-1}}=W_y,$ where $h\in N_G(T)$ is a representative of $[h]\in W.$ We denote $hxh^{-1}\in T$ by $a.$ We intend to show that
$Z_G(a)=Z_G(y).$ To achieve this, we first show that $Z_G(a)$ and $Z_G(y)$ have the same roots. Let $\Phi_a$ and $\Phi_y$ respectively 
denote the root systems of $Z_G(a)$ and $Z_G(y)$ with respect to the common maximal torus $T.$ Since $G$ is simply connected,
 by Proposition \ref{conncent2}, both $Z_G(a)$ and $Z_G(y)$ are connected. Hence by Proposition \ref{centralizer1}, we have,
$\Phi_a=\{\alpha\in \Phi |\alpha(a)=1\}$ and $\Phi_y=\{\beta\in \Phi |\beta(y)=1\}.$ 

Let $\alpha\in \Phi_a.$ Hence $w_\alpha \in W_a=W_y. $ Therefore by Lemma \ref{root}, $\alpha(y)=1$ which implies $\alpha\in \Phi_y.$
This shows that $\Phi_a\subset \Phi_y.$ Similarly the other inclusion. Hence $\Phi_a=\Phi_y$ which implies $Z_G(a)=Z_G(y)$ by Proposition
\ref{centralizer1}. 
\end{proof}

\begin{corollary}
 Let $G$ be a compact simply connected Lie group (resp. a simply connected algebraic group over an algebraically closed field),
 $T\subset G$ a maximal torus. The genus number (resp. semisimple genus number) of $G$ equals the number of orbit types of the 
action of $W(G,T)$ on $T.$
\end{corollary}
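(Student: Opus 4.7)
The plan is to reduce the claim directly to Theorems \ref{genus} and \ref{genus2}. By definition, the number of orbit types of the $W$-action on $T$ is precisely the cardinality of $\{[W_x] : x \in T\}$, where $[W_x]$ denotes the $W$-conjugacy class of the stabilizer $W_x$. The two theorems above already provide a bijection
\[
\{[Z_G(x)] : x \in T\} \longleftrightarrow \{[W_x] : x \in T\}.
\]
So the corollary will follow once I verify that the left-hand set above coincides with the full collection $\{[Z_G(x)] : x \in G,\; x\ \text{semisimple}\}$ that defines the (semisimple) genus number of $G$.

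To establish this last equality, I would argue as follows. The containment of the torus-indexed set in the semisimple-indexed set is immediate. For the reverse containment, let $x \in G$ be any semisimple element. In the compact case, every element of $G$ lies in some maximal torus, so in particular $x$ does; in the algebraic case, every semisimple element of a connected reductive group lies in a maximal torus. Since any two maximal tori of $G$ are conjugate, there exists $g \in G$ with $gxg^{-1} \in T$. Conjugation gives $Z_G(gxg^{-1}) = g\, Z_G(x)\, g^{-1}$, so $[Z_G(x)] = [Z_G(gxg^{-1})]$ lies in the set indexed by $T$.

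Combining the two observations, the genus number (respectively the semisimple genus number) of $G$ equals $|\{[Z_G(x)] : x \in T\}|$, which by Theorem \ref{genus} (resp.\ Theorem \ref{genus2}) equals $|\{[W_x] : x \in T\}|$, which is exactly the number of orbit types of the $W$-action on $T$. There is no real obstacle here: the only non-formal ingredient is the standard fact that every semisimple element of $G$ is conjugate into $T$, which is available from the basic theory of compact Lie groups and of reductive algebraic groups referenced in the preliminaries.
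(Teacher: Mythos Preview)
Your proof is correct and follows essentially the same approach as the paper's own argument: invoke Theorems \ref{genus} and \ref{genus2} for the bijection over $T$, then use conjugacy of maximal tori to show every semisimple element is orbit-equivalent to one in $T$. If anything, you are a bit more explicit than the paper in spelling out why the $T$-indexed and $G$-indexed sets of centralizer classes coincide.
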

\begin{proof}
By Theorem \ref{genus} and Theorem \ref{genus2}, 
the number of orbit types of elements belonging to a fixed maximal torus $T$ is equal to the number of orbit types 
of elements from $T$ in
the Weyl group. 
Any (semisimple) element $x\in G$ is contained in some maximal torus of $G$.
Let $y\in G$ be any other (semisimple) element and 
let $T^{\prime}$ be a maximal torus of $G$ such that $y\in T^{\prime}$. Now $T$ is conjugate to $ T^{\prime},$ i.e.
$\exists~ g\in G$ such that $gTg^{-1} = T^{\prime}$. Therefore $Z_G(y)$ is conjugate to $Z_G(x),$ where $x=g^{-1}yg\in T.$
Hence each (semisimple) element of $G$ is orbit equivalent to an element of $T.$ The result now follows.
\end{proof}
Next we want to investigate connected groups which are not necessarily simply connected. It turns out that the  connected genus number of
a connected semisimple group is equal to the genus number of its simply connected cover, which we shall see (Theorem 2.3).
We note the following two results, which are known:
\begin{proposition}(\textbf{[BD]}, Chapter 4,Theorem 2.9)
 Let $f;G\rightarrow H$ be a surjective homomorphism of compact Lie groups. If $T\subset G$ is a maximal torus, then $f(T)\subset
H$ is a maximal torus. Furthermore, ker$(f)\subset T$ iff ker$(f)\subset Z(G)$. In this case $f$ induces an isomorphism of Weyl groups.
\end{proposition}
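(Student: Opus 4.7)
The statement has three parts, and I would handle them in the stated order, working throughout under the tacit connectedness hypothesis in the BD setting.

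First, to establish that $f(T)$ is a maximal torus of $H$, I would note that $f(T)$ is the continuous image of a compact connected abelian group, hence is a torus in $H$. Maximality I would deduce from the conjugacy theorem for maximal tori in compact connected Lie groups: every element of $G$ is conjugate into $T$, so by surjectivity of $f$ every element of $H$ is conjugate into $f(T)$. If $S \supset f(T)$ were a strictly larger torus in $H$, choose $s \in S$ not in $f(T)$; then $s$ is conjugate in $H$ to some $f(t) \in f(T)$, which forces $f(T)$ and $s$ to lie in a common maximal torus of $H$, and a dimension/containment argument yields $S = f(T)$.

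Second, for the equivalence $\ker f \subset T \iff \ker f \subset Z(G)$, the $\Leftarrow$ direction is immediate from $Z(G) \subset T$, which in turn follows from the description
\[
Z(G) = \bigcap_{g\in G} gTg^{-1}
\]
valid for a compact connected Lie group. The direction $\Rightarrow$ is the substantive content, and I would exploit the same description: since $\ker f$ is a closed normal subgroup of $G$, one has $\ker f = g\,(\ker f)\,g^{-1} \subset gTg^{-1}$ for every $g \in G$, and intersecting over all $g \in G$ places $\ker f$ inside $Z(G)$.

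Third, to produce the Weyl group isomorphism, I would begin by noting that $f(N_G(T)) \subset N_H(f(T))$ (because conjugation by $f(n)$ preserves $f(T)$ whenever conjugation by $n$ preserves $T$), so $f$ descends to a homomorphism $W(G,T) \to W(H, f(T))$. For surjectivity, I would lift a representative $n \in N_H(f(T))$ to $\tilde n \in G$; then $f(\tilde n T \tilde n^{-1}) = f(T)$, so $\tilde n T \tilde n^{-1} \subset f^{-1}(f(T))$. The crucial point is that the hypothesis $\ker f \subset T$ gives $f^{-1}(f(T)) = T \cdot \ker f = T$, so $\tilde n T \tilde n^{-1}$ is a maximal torus of $G$ contained in $T$, hence equal to $T$, meaning $\tilde n \in N_G(T)$. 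For injectivity, if $f(n) \in f(T)$ with $n \in N_G(T)$, write $f(n) = f(t)$ for some $t \in T$; then $nt^{-1} \in \ker f \subset T$, whence $n \in T$.

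The main obstacle, as I see it, is the forward direction of the iff statement, which rests on the centre-as-intersection-of-maximal-tori identity; and the delicate equality $f^{-1}(f(T)) = T$ in the surjectivity step, which is precisely where the hypothesis $\ker f \subset T$ is used. Once these two inputs are cleanly invoked, the remaining verifications are routine manipulations with conjugation and quotients.
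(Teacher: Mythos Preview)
The paper does not supply a proof of this proposition: it is quoted verbatim as a known result from Br\"ocker--tom Dieck (Chapter~4, Theorem~2.9) and used as input for the subsequent discussion of covering maps. There is therefore no ``paper's own proof'' against which to compare your attempt.

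That said, your proposal is essentially the standard textbook argument and is correct in outline. One small imprecision is worth flagging: in your maximality argument for $f(T)$, choosing an arbitrary $s \in S \setminus f(T)$ and observing that $s$ is conjugate to some $f(t)$ does not by itself force $s$ and $f(T)$ into a common maximal torus. The clean way to finish is to take $s$ to be a \emph{topological generator} of $S$; then $s = h f(t) h^{-1}$ for some $h \in H$, $t \in T$, whence $S = \overline{\langle s \rangle} \subseteq h\, f(T)\, h^{-1}$, giving $\dim S \le \dim f(T)$ and hence $S = f(T)$. Your phrase ``a dimension/containment argument'' suggests you have this in mind, but the preceding sentence should be adjusted. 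The remaining two parts (the normality argument placing $\ker f$ inside $\bigcap_g gTg^{-1} = Z(G)$, and the Weyl-group isomorphism via $f^{-1}(f(T)) = T$) are handled correctly.
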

A similar result holds for algebraic groups also, which we now quote (\textbf{[H2]}, Chapter 9, Proposition B),
\begin{proposition}
 Let $\phi:G\rightarrow G^\prime$ be an epimorphism of connected algebraic groups,
 with $T $ and $T^\prime=\phi(T)$ respective maximal tori. Then
$\phi$ induces a surjective map $WG\rightarrow WG^\prime,$ which is also injective in case Ker $\phi$ lies in all Borel subgroups of $G.$
Here, $WG$ and $WG^{\prime}$ denote the Weyl groups of $G$ and $G^{\prime}$ respectively.
\end{proposition}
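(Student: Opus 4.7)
My plan is to construct the induced map $\bar\phi$ explicitly from $\phi$, establish surjectivity by a lifting-and-torus-conjugacy argument, and then obtain injectivity by exploiting the Borel hypothesis to force $\ker\phi$ into $T$.

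First I would check that $\phi$ carries $N_G(T)$ into $N_{G'}(T')$: for $n \in N_G(T)$ one has $\phi(n)T'\phi(n)^{-1} = \phi(nTn^{-1}) = \phi(T) = T'$. Combined with $\phi(T) = T'$, this means the restriction of $\phi$ to $N_G(T)$ descends to a well-defined homomorphism $\bar\phi : WG \to WG'$.

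For surjectivity, given $w' \in WG'$ represented by $n' \in N_{G'}(T')$, I would pick $g \in G$ with $\phi(g) = n'$ using the surjectivity of $\phi$. Then $\phi(gTg^{-1}) = n'T'(n')^{-1} = T'$, so both $T$ and $gTg^{-1}$ are maximal tori of the closed subgroup $H := \phi^{-1}(T')$, and hence of its identity component $H^\circ$. Conjugacy of maximal tori of a connected algebraic group then supplies $h \in H^\circ$ with $hgTg^{-1}h^{-1} = T$. Setting $n := hg \in N_G(T)$ gives $\phi(n) = \phi(h)\,n' \in T' n'$, so $\bar\phi(nT) = w'$.

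For injectivity, suppose $n \in N_G(T)$ satisfies $\phi(n) \in T'$. Since the restriction of $\phi$ to $T$ surjects onto $T'$, I can choose $t \in T$ with $\phi(t) = \phi(n)$, whence $k := t^{-1}n$ lies in $\ker\phi \cap N_G(T)$, and proving $n \in T$ reduces to showing $\ker\phi \subset T$. For this I would exploit that $\ker\phi$ is a closed normal subgroup of $G$ lying in every Borel, in particular in the intersection of all Borels containing $T$. After reducing modulo the unipotent radical, this intersection becomes the centre of the reductive quotient, which lies in the image of $T$; combining this with normality of $\ker\phi$ and the identity $\bigcap_B B = Z(G) \subset T$ in the reductive case should push $\ker\phi$ into $T$, and the injectivity follows.

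The main obstacle is this last step: constructing $\bar\phi$ and proving surjectivity are essentially formal, but extracting the clean inclusion $\ker\phi \subset T$ from the Borel hypothesis is delicate in the non-reductive setting, since the intersection of all Borels may carry a nontrivial unipotent part and one must argue carefully through the unipotent radical.
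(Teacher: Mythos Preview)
The paper does not supply a proof of this proposition; it is quoted from Humphreys' \textit{Linear Algebraic Groups} (reference \textbf{[H2]}), so there is no in-paper argument to compare yours against.

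On its own merits, your construction of $\bar\phi$ and your surjectivity argument (lift an element of $N_{G'}(T')$, then conjugate inside $\phi^{-1}(T')^\circ$) are correct and are essentially the textbook proof. For injectivity your reduction to $\ker\phi\subset T$ is valid, and in the reductive case your use of $\bigcap_B B = Z(G)\subset T$ (or, more directly, $B\cap B^{-}=T$ for opposite Borels through $T$) finishes the job.

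The non-reductive case you worry about has a cleaner fix than passing to the reductive quotient. For general connected $G$ the Weyl group should be taken as $WG=N_G(T)/C_G(T)$ rather than $N_G(T)/T$, and with that convention one only needs $\ker\phi\cap N_G(T)\subset C_G(T)$. This follows from a single structural fact about connected solvable groups: if $B$ is connected solvable with maximal torus $T$, then $N_B(T)=C_B(T)$. Granting this, any $k\in\ker\phi\cap N_G(T)$ lies (by hypothesis) in a Borel $B\supset T$ and normalises $T$, hence $k\in N_B(T)=C_B(T)\subset C_G(T)$, and injectivity follows. So the obstacle you flag is genuine for the definition $W=N/T$ when $G$ is not reductive, but it disappears once the Weyl group is defined appropriately for that generality; your idea of reducing modulo the unipotent radical can be made to work, but the route through $N_B(T)=C_B(T)$ is shorter.
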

Let $G$ be a compact connected semisimple Lie group or a
 connected semisimple algebraic group over an algebraically closed field. Let $\widetilde{G}$ be the simply connected cover of $G$ with 
the covering map,
$$\rho:\widetilde{G}\longrightarrow G.$$ Then, for a maximal torus
 $\widetilde{T}\subset \widetilde{G},$ $\rho(\widetilde{T})=T$ is a maximal torus in $G.$ Since $ker\rho$ is contained in all the maximal 
tori of $\widetilde{G},$ $\rho$ induces an isomorphism of $W\widetilde{G}$ and $WG$ by the above cited propositions.

Let $(X(T), \Phi, Y(T), \Phi^{\ast}) $ be the root datum of $G.$ Let  $V:= (Y(T)\tensor \mathbb{R})$ and 
$\overline{Y(T)}:= \{v\in V: \alpha(v)\in \mathbb{Z},\quad \forall \alpha\in \Phi\}.$
  We associate a finite group $C:= \overline{Y(T)}/\mathbb{Z}\Phi^{\ast}$ with the isogeny class of $G.$ Then $C$ is a
 finite abelian group. Let $C^{\prime}(G):=Y(T)/\mathbb{Z}\Phi^{\ast}\subset C.$ It can be shown that any subgroup of $C$ is of the form 
$C^{\prime}(H), $
for some group $H$ belonging to the isogeny class of $G.$  (see \textbf{[T]}, Section 1.5) 

We first make the following observation:
\begin{lemma}
 Let $G$ be a compact connected semisimple Lie group or a connected semisimple algebraic group over an algebraically closed field $K$
and $\widetilde{G}$
be its simply connected cover. Let $\rho: \widetilde{G}\rightarrow G$ be the covering map. Assume that, $char(K)$ does not divide the order
of $ C(G).$
 Then 
$\rho(Z_{\widetilde{G}}(\tilde{x}))=Z_G(x)^\circ,$ where $\tilde{x}\in \widetilde{T},$ a fixed maximal torus in $\widetilde{G}$ and 
$x=\rho(\tilde{x}).$
\end{lemma}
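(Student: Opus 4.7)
The plan is to prove the two inclusions $\rho(Z_{\widetilde{G}}(\tilde{x})) \subseteq Z_G(x)^{\circ}$ and $Z_G(x)^{\circ} \subseteq \rho(Z_{\widetilde{G}}(\tilde{x}))$ separately. The first is essentially formal: because $\widetilde{G}$ is simply connected, Propositions \ref{conncent1} and \ref{conncent2} give that $Z_{\widetilde{G}}(\tilde{x})$ is connected, so its image under $\rho$ is a connected subgroup of $G$ contained in $Z_G(x)$ and containing the identity, and therefore lies in $Z_G(x)^{\circ}$.

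For the reverse inclusion, my strategy is to write down explicit generating sets for the two centralizers via Proposition \ref{centralizer1} and then push one through $\rho$ to obtain the other. The pullback $\rho^{\ast}\colon X(T) \hookrightarrow X(\widetilde{T})$ identifies the root systems of $G$ and $\widetilde{G}$, and the compatibility $\alpha(x) = \alpha(\rho(\tilde{x})) = (\rho^{\ast}\alpha)(\tilde{x})$ shows that the set $\{\alpha : \alpha(x) = 1\}$ corresponds to $\{\alpha : \alpha(\tilde{x}) = 1\}$ under this identification. Proposition \ref{centralizer1}, combined with Proposition \ref{conncent2} on the $\widetilde{G}$ side to drop the connected-component superscript, then yields
\[
Z_{\widetilde{G}}(\tilde{x}) = \langle \widetilde{T},\, \widetilde{U}_{\alpha} : \alpha(\tilde{x}) = 1 \rangle, \qquad Z_G(x)^{\circ} = \langle T,\, U_{\alpha} : \alpha(x) = 1 \rangle,
\]
and applying $\rho$ to the first expression reduces the claim to verifying $\rho(\widetilde{T}) = T$ and $\rho(\widetilde{U}_{\alpha}) = U_{\alpha}$ for each root $\alpha$ with $\alpha(\tilde{x}) = 1$.

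The main obstacle is this last root-subgroup identity, which is exactly what the characteristic hypothesis is there to supply. The kernel of $\rho$ is a finite central subgroup scheme of $\widetilde{G}$ whose order divides $|C(G)|$, so the assumption $\mathrm{char}(K) \nmid |C(G)|$ forces $\ker \rho$ to be \'etale and hence $\rho$ to be a separable isogeny; by equivariance with respect to the torus actions, such an isogeny restricts to an isomorphism on each one-dimensional unipotent root subgroup, which closes the argument. In the compact Lie group case the separability issue does not arise, and the same strategy can be carried out either by complexifying or by working directly with the Lie-algebra root-space decomposition of $\widetilde{\mathfrak{g}}$ and $\mathfrak{g}$.
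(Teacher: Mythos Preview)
Your proof is correct but takes a genuinely different route from the paper's. The paper argues by dimension: separability of $\rho$ makes $d\rho\colon \mathbf{L}(\widetilde{G})\to\mathbf{L}(G)$ an isomorphism; one then observes that $d\rho^{-1}$ carries $\mathbf{L}(Z_G(x)^\circ)$ into the $Ad_{\tilde{x}}$-fixed subspace, which is $\mathbf{L}(Z_{\widetilde{G}}(\tilde{x}))$, giving $\dim Z_G(x)^\circ\le \dim Z_{\widetilde{G}}(\tilde{x})=\dim\rho(Z_{\widetilde{G}}(\tilde{x}))$, and equality follows from the easy inclusion. No explicit generators are ever written down.

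Your argument instead invokes Proposition~\ref{centralizer1} to exhibit both centralizers as $\langle T,\,U_\alpha:\alpha(x)=1\rangle$ and then matches the generating sets through $\rho$. This is more structural and makes transparent exactly which subgroups are involved, but it costs you something in the compact case: Proposition~\ref{centralizer1} as stated in the paper is for algebraic groups only, so you are forced to complexify or to reprove the analogous structure theorem for compact groups, whereas the paper's Lie-algebra argument runs uniformly in both settings. One further remark: the identity $\rho(\widetilde{U}_\alpha)=U_\alpha$ actually holds for \emph{any} central isogeny, separable or not, since $\ker\rho$ sits inside the torus and hence meets each unipotent root subgroup trivially; so your argument does not genuinely consume the characteristic hypothesis at that step, although the paper's argument does need it for $d\rho$ to be an isomorphism.
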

\begin{proof}
For an algebraic group or a Lie group $G,$ let us denote the corresponding Lie algebra by $\mathbf{L}(G).$
 Since $char(K)$ does not divide the order of $C^{\prime}(G),$ $\rho$ is a separable morphism. Hence, the differential
$d\rho:\mathbf{L}(\widetilde{G})\rightarrow \mathbf{L}(G),$ is an isomorphism of Lie algebras.
Since $Z_{\widetilde{G}}(\tilde{x})$ is connected, $\rho(Z_{\widetilde{G}}(\tilde{x}))\subset Z_G(x)^\circ.$
 If we show that the dimensions
are equal, we would be through. For this, we look at the corresponding Lie algebras. 
 Now since $Ad_{x}v=v$ for all 
$v\in\mathbf{L}(Z_G(x)^\circ), $ $d\rho Ad_{\tilde{x}}d\rho^{-1}v=Ad_{x}v=v.$  Therefore, 
for every $v\in \mathbf{L}(Z_G(x)^\circ),$ $Ad_{\tilde{x}}d\rho^{-1}v=d\rho^{-1}v.$ 
Hence $d\rho^{-1}(\mathbf{L}(Z_G(x)^\circ))\subset \mathbf{L}(\rho(Z_{\widetilde{G}}(\tilde{x}))) .$ Since $d\rho$ is an isomorphism,
we have $dim(\mathbf{L}(Z_G(x)^\circ))\leq dim (\mathbf{L}(\rho(Z_{\widetilde{G}}(\tilde{x})))).$
Therefore $dim( Z_G(x)^\circ)\leq dim( \rho(Z_{\widetilde{G}}(\tilde{x}))). $ Hence the equality.

\end{proof}
\noindent\textbf{Remark:} Note that, the covering map $\rho:SL_2(K)\longrightarrow PSL_2(K),$ is not separable if $char(K)=2,$ 
since $C^{\prime}(PSL_2(K))= \mathbb{Z}_2.$ Hence in this case, $d\rho$ is not an isomorphism.

\begin{theorem}
 Let $G$ be a compact connected semisimple Lie group or a connected semisimple algebraic group over an algebraically closed field $k.$
Let $\widetilde{G}$ be the simply connected cover of $G$ with the covering map $\rho.$ Fix  a maximal torus
$\widetilde{T}$ in $\widetilde{G}.$ Then the map,
$$\{[Z_{\widetilde{G}}(\tilde{t})]:\tilde{t}\in\widetilde{T}\}\rightarrow\{[Z_G(x)^\circ]:x\in T\}$$
defined by,
$$[Z_{\widetilde{G}}(\tilde{t})]\mapsto [Z_G(\rho(\tilde{t}))^\circ],$$
is a bijection. Here $T=\rho(\widetilde{T})\subset G$ is a maximal torus.
\end{theorem}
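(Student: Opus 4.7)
The plan is to leverage the previous lemma, which tells us that $\rho$ maps centralizers in $\widetilde{G}$ onto connected centralizers in $G$, i.e.\ $\rho(Z_{\widetilde{G}}(\tilde{t})) = Z_G(\rho(\tilde{t}))^\circ$. Combined with the fact that $\ker\rho$ is central in $\widetilde{G}$, this essentially forces the stated map to be a bijection. Throughout, I take the characteristic hypothesis of the preceding lemma to be in force.

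For well-definedness, suppose $\tilde{g}\,Z_{\widetilde{G}}(\tilde{t}_1)\,\tilde{g}^{-1} = Z_{\widetilde{G}}(\tilde{t}_2)$ for some $\tilde{g}\in\widetilde{G}$. Applying $\rho$ to both sides and invoking the lemma, we obtain $\rho(\tilde{g})\,Z_G(\rho(\tilde{t}_1))^\circ\,\rho(\tilde{g})^{-1} = Z_G(\rho(\tilde{t}_2))^\circ$, so the target conjugacy class depends only on $[Z_{\widetilde{G}}(\tilde{t}_1)]$. Surjectivity is immediate: any $x\in T$ admits a preimage $\tilde{t}\in\widetilde{T}$ since $\rho(\widetilde{T})=T$, and $[Z_{\widetilde{G}}(\tilde{t})]$ then maps to $[Z_G(x)^\circ]$.

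The substantive step is injectivity. Assume $[Z_G(\rho(\tilde{t}_1))^\circ]=[Z_G(\rho(\tilde{t}_2))^\circ]$, so that $g\,Z_G(\rho(\tilde{t}_1))^\circ\,g^{-1} = Z_G(\rho(\tilde{t}_2))^\circ$ for some $g\in G$. Choose any lift $\tilde{g}\in\rho^{-1}(g)$ and consider the subgroups
\[
A := \tilde{g}\,Z_{\widetilde{G}}(\tilde{t}_1)\,\tilde{g}^{-1} = Z_{\widetilde{G}}(\tilde{g}\tilde{t}_1\tilde{g}^{-1}), \qquad B := Z_{\widetilde{G}}(\tilde{t}_2).
\]
The lemma gives $\rho(A)=\rho(B)$. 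Since $\ker\rho\subset Z(\widetilde{G})$ is contained in every centralizer in $\widetilde{G}$, both $A$ and $B$ contain $\ker\rho$. A subgroup of $\widetilde{G}$ containing $\ker\rho$ equals the full $\rho$-preimage of its image, so $A=\rho^{-1}(\rho(A))=\rho^{-1}(\rho(B))=B$, giving $[Z_{\widetilde{G}}(\tilde{t}_1)] = [Z_{\widetilde{G}}(\tilde{t}_2)]$.

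The only anticipated obstacle is checking that the lemma's separability hypothesis on $\rho$ is in force in the algebraic-group case (the remark after the lemma shows this is a genuine constraint); in the compact Lie setting the characteristic issue does not arise. Once the lemma applies, the bijection is essentially formal, driven by the centrality of $\ker\rho$.
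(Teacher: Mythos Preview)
Your proof is correct and follows essentially the same approach as the paper: both rely on Lemma~2.2 (that $\rho(Z_{\widetilde{G}}(\tilde{t}))=Z_G(\rho(\tilde{t}))^\circ$) together with the centrality of $\ker\rho$. Your injectivity argument via the general fact that a subgroup containing $\ker\rho$ equals the full preimage of its image is a cleaner packaging of what the paper does elementwise (lifting an element, comparing two lifts, and using that their ratio lies in the center), and your explicit acknowledgment of the characteristic hypothesis needed for Lemma~2.2 is appropriate, since the paper's theorem statement silently inherits it.
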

\begin{proof}
If $\tilde{g}\in \widetilde{G},$ then we shall denote $\rho({\tilde{g}})$ by $g.$

  We first show that the map is well-defined. So let, $[Z_{\widetilde{G}}(\tilde{t})]=[Z_{\widetilde{G}}(\tilde{t_1})]$ with 
$\tilde{t},\tilde{t_1}\in \widetilde{T}.$ Therefore there exists $\tilde{g}\in \widetilde{G}$ such that,
$Z_{\widetilde{G}}(\tilde{t})=\tilde{g}Z_{\widetilde{G}}(\tilde{t_1})\tilde{g}^{-1}=Z_{\widetilde{G}}(\tilde{g}\tilde{t_1}\tilde{g}^{-1}).$
Take $a\in Z_G(t)^\circ,$ where $\rho(\tilde{t})=t.$ Consider any lift $\tilde{a}\in Z_{\widetilde{G}}(\tilde{t})$ of $a$
(such a lift exists by Lemma 2.2). Therefore,
$\tilde{a}\tilde{g}\tilde{t_1}\tilde{g}^{-1}\tilde{a}^{-1}=\tilde{g}\tilde{t_1}\tilde{g}^{-1}.$ Applying $\rho $ on both sides we get,
$agt_1g^{-1}a^{-1}=gt_1g^{-1}.$ Thus, $Z_G(t)^\circ \subset Z_G(gt_1g^{-1})^\circ.$ Similarly $Z_G(gt_1g^{-1})^\circ \subset Z_G(t)^\circ.$

That the map is onto is clear from the definition.

To prove that the map is injective, let $Z_G(t_1)^\circ=gZ_G(t_2)^\circ g^{-1}=Z_G(gt_2g^{-1})^\circ$ for some $g\in G.$ If $\tilde{a}
\in Z_{\widetilde{G}}(\tilde{t_1}),$ the $a=\rho(\tilde{a})\in Z_G(t_1)^\circ=Z_G(gt_2g^{-1})^\circ.$ Therefore,
$agt_2g^{-1}a^{-1}=gt_2g^{-1}.$ If we show that $\tilde{a}\in Z_{\widetilde{G}}(\tilde{g}\tilde{t_2}\tilde{g}^{-1})$ then we are through.
So let $\tilde{a_1} $ be any  lift of $a$ in $Z_{\widetilde{G}}(\tilde{g}\tilde{t_2}\tilde{g}^{-1}).$ Then, 
$\rho(\tilde{a}\tilde{a_1}^{-1})=1\Rightarrow \tilde{a}\tilde{a_1}^{-1}\in Ker \rho\subset Z(\widetilde{G}).$ Therefore,
$\tilde{a}\tilde{a_1}^{-1}\tilde{g}\tilde{t_2}\tilde{g}^{-1}\tilde{a_1}\tilde{a}^{-1}=\tilde{g}\tilde{t_2}\tilde{g}^{-1}
\Rightarrow \tilde{a}\tilde{g}\tilde{t_2}\tilde{g}^{-1}\tilde{a}^{-1}=\tilde{g}\tilde{t_2}\tilde{g}^{-1}.$ Hence, $\tilde{a}\in 
Z_{\widetilde{G}}(\tilde{g}\tilde{t_2}\tilde{g}^{-1}),$ which shows that
 $Z_{\widetilde{G}}(\tilde{t_1})\subset Z_{\widetilde{G}}(\tilde{g}\tilde{t_2}\tilde{g}^{-1}).$ Similarly 
the other inclusion follows. This completes the proof.
\end{proof}
\textbf{Remark}: It is important to note that if the group is not simply connected, then the number of classes of centralizers 
might be larger than
the number of isotropy classes of the Weyl group. For example if we consider the group $PSL_2(K)$( $char(K)\neq 2$), the number of isotropy subgroups 
in the Weyl
group $S_2$ is $2$ but the number of conjugacy classes of centralizers is $3.$ However, by Theorem 2.3, the connected genus number of 
$PSL_2(K)$ is $2$ which is equal to the genus number of its simply connected cover $SL_2(K).$

We have the following result on reductive algebraic groups:
\begin{theorem}
 Let $G$ be a connected reductive algebraic group over an algebraically closed field. Let $G^\prime$ be the commmutator subgroup of $G.$
Then the connected genus number of $G$ is equal to the connected genus number of $G^\prime.$
\end{theorem}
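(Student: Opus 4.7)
The plan is to construct an explicit bijection
$$\{[Z_{G'}(s)^\circ]_{G'} : s \in G' \text{ semisimple}\} \longrightarrow \{[Z_G(x)^\circ]_G : x \in G \text{ semisimple}\}$$
via the map $[Z_{G'}(s)^\circ] \mapsto [Z_G(s)^\circ]$. The strategy rests on the standard structure theorem for connected reductive groups, together with a short lemma that lets one recover subgroups of $G'$ from their $Z(G)^\circ$-saturations.

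First I would record the structural setup. Since $G$ is a connected reductive algebraic group, $G = Z(G)^\circ \cdot G'$, where $Z(G)^\circ$ is the central torus, $G'$ is semisimple, and $F := Z(G)^\circ \cap G'$ is a finite subgroup of $Z(G')$. In particular every $g \in G$ admits a (non-unique) decomposition $g = z g'$ with $z \in Z(G)^\circ$ and $g' \in G'$. I would then establish the key centralizer identity: for a semisimple $x \in G$ written as $x = z s$ with $z \in Z(G)^\circ$ and $s \in G'$, the element $s = z^{-1} x$ is semisimple (commuting product of semisimple elements) and $Z_G(x) = Z_G(s)$ because $z$ is central. Moreover, writing an arbitrary $y \in Z_G(s)$ as $y = z' y'$ with $y' \in G'$, the relation $y s y^{-1} = y' s y'^{-1} = s$ forces $y' \in Z_{G'}(s)$, so $Z_G(s) = Z(G)^\circ \cdot Z_{G'}(s)$ and hence
$$Z_G(s)^\circ = Z(G)^\circ \cdot Z_{G'}(s)^\circ.$$

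With this in hand, well-definedness and surjectivity of the proposed map are immediate: if $g' \in G'$ conjugates $Z_{G'}(s_1)^\circ$ to $Z_{G'}(s_2)^\circ$, it conjugates the corresponding $Z(G)^\circ$-saturations as well; and every $[Z_G(x)^\circ]_G$ equals $[Z_G(s)^\circ]_G$ for some semisimple $s \in G'$. Injectivity is the only step requiring work. Suppose $g Z_G(s_1)^\circ g^{-1} = Z_G(s_2)^\circ$ for some $g \in G$; writing $g = z g'$ and using centrality of $z$, this reduces to
$$Z(G)^\circ \cdot g' Z_{G'}(s_1)^\circ g'^{-1} = Z(G)^\circ \cdot Z_{G'}(s_2)^\circ.$$
To recover the desired $G'$-conjugacy I would prove the following short lemma: for any connected closed subgroup $H \subseteq G'$,
$$\bigl(Z(G)^\circ \cdot H\bigr) \cap G' = F \cdot H,$$
which is a finite extension of $H$, so $\bigl((Z(G)^\circ \cdot H) \cap G'\bigr)^\circ = H$. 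Applying this to both $H = g' Z_{G'}(s_1)^\circ g'^{-1}$ and $H = Z_{G'}(s_2)^\circ$ yields $g' Z_{G'}(s_1)^\circ g'^{-1} = Z_{G'}(s_2)^\circ$, i.e.\ a conjugation inside $G'$.

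The main obstacle is precisely this last recovery step: saturating by the central torus could in principle collapse distinct $G'$-classes, and one must verify that it does not. The finiteness of $F = Z(G)^\circ \cap G'$ is exactly what guarantees that $H$ is detected by $(Z(G)^\circ \cdot H) \cap G'$ up to a finite quotient, so that the identity component uniquely reconstructs $H$. Everything else is a short verification once the decomposition $Z_G(s)^\circ = Z(G)^\circ \cdot Z_{G'}(s)^\circ$ is in place.
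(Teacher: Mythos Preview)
Your proof is correct and rests on the same structural backbone as the paper's: both use $G = Z(G)^\circ \cdot G'$ and the observation that $Z_G(zs) = Z_G(s)$ for $z \in Z(G)^\circ$. Where you diverge is in the injectivity step. The paper maps in the opposite direction, $[Z_G(x)^\circ]_G \mapsto [Z_{G'}(x')^\circ]_{G'}$, and proves injectivity elementwise: given $a \in Z_G(x')^\circ$ it writes $a = a' s_a$, argues $a' \in Z_G(x')^\circ$, and then shows $a' \in Z_{G'}(x')^\circ$ by treating the unipotent and semisimple cases separately (using a maximal torus argument in the latter). Your route is cleaner: you first establish the global identity $Z_G(s)^\circ = Z(G)^\circ \cdot Z_{G'}(s)^\circ$ and then use the short lemma $\bigl(Z(G)^\circ \cdot H\bigr) \cap G' = F \cdot H$ with $F = Z(G)^\circ \cap G'$ finite, so that taking identity components recovers $H$ on the nose. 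This avoids the case split on Jordan type entirely and makes the finiteness of $F$ the single transparent reason injectivity holds. The paper's argument is more hands-on but reaches the same conclusion; yours packages the key point more conceptually.
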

\begin{proof}
 Since $G$ is reductive, we have $G=G^\prime.Z(G)^\circ,$ where $Z(G)^\circ$ is the connected component of the centre of $G.$ 
For any $g\in G,$ we shall write $g=g^\prime s_g,$ with $g^\prime \in G^\prime$ and $s_g\in Z(G)^\circ.$
Observe that
for any $g^\prime \in G^\prime$ and $s\in Z(G)^\circ,$ $Z_G(g^\prime s)=Z_G(g^\prime).......(\ast).$ 

Define a map:
$$\{[Z_G(x)^\circ]:x \quad  semisimple \}\rightarrow \{[Z_{G^\prime}(x^\prime)^\circ]: x^\prime \quad  semisimple\}$$
by, $[Z_G(x)^\circ]\mapsto [Z_{G^\prime}(x^\prime)^\circ],$ where $x=x^\prime s_x,$ $x^\prime \in G^\prime$ and $s_x\in Z(G)^\circ.$ 
We prove that this map is a bijection.

To show that the above map is well defined, assume that $Z_G(x)^\circ= Z_G(gyg^{-1})^\circ,$ for some $g\in G.$ 
Then by $(\ast),$ $Z_{G^\prime}(x^\prime)^\circ \subset Z_G(x^\prime)^\circ= Z_G(x)^\circ=Z_G(gyg^{-1})^\circ=Z_G(gy^\prime g^{-1})^\circ.$
Hence $Z_{G^\prime}(x^\prime)^\circ \subset Z_{G^\prime}(gy^\prime g^{-1})^\circ.$ 
Similarly $Z_{G^\prime}(gy^\prime g^{-1})^\circ\subset Z_{G^\prime}(x^\prime)^\circ , $ which shows that the above map is well defined.

It is clear from the definition that the map is onto.

We now prove the injectivity. So assume that, $Z_{G^\prime}(x^\prime)^\circ=Z_{G^\prime}(g^\prime y^\prime g^{\prime -1})^\circ,$ 
for some $g^\prime \in G^\prime.$ Let $a\in Z_G(x^\prime)^\circ,$ where $a=a^\prime s_a.$ Then $a^\prime \in Z_G(x^\prime)$ as $s_a$ is 
central. Also note that $s_a\in Z(G)^\circ\subset Z_G(x^\prime)^\circ.$ Therefore, $a^\prime= as^{\prime -1}\in Z_G(x^\prime)^\circ.$ 
In particular, $a^\prime \in Z_{G^\prime}(x^\prime).$ 

We claim that $a^\prime \in Z_{G^\prime}(x^\prime)^\circ.$ If $a^\prime $ is
 unipotent, then $a^\prime \in Z_{G^\prime}(x^\prime)^\circ,$ since $G^\prime$ is a connected semisimple 
group(see \textbf{[H1]}, Chapter 1, Section 12). So let  $a^\prime$ be semisimple. Choose a maximal torus $T\in Z_G(x^\prime)^\circ$ 
such that $a^\prime \in T.$ Let $T=T^\prime. Z(G)^\circ,$ where $T^\prime$ is a maximal torus in $G.$ Therefore,
 $T^\prime\subset Z_{G^\prime}(x^\prime)^\circ.$  Write $a^\prime=a_1 b$ with $a_1\in T^\prime$ and $b\in Z(G)^\circ.$ 
Since both $a_1$ and $b$ are in $Z_{G^\prime}(x^\prime)^\circ,$ so is $a^\prime.$ Hence the claim. Therefore, by assumption, 
$a^\prime \in Z_{G^\prime}(x^\prime)^\circ=Z_{G^\prime}(g^\prime y^\prime g^{\prime -1})^\circ \subset 
Z_G(g^\prime y^\prime g^{\prime -1})^\circ.$ Since $a_s\in Z(G)^\circ,$ $a=a^\prime a_s\in Z_G(g^\prime y^\prime g^{\prime -1})^\circ. $
 Thus we have shown that, $Z_G(x^\prime)^\circ\subset Z_G(g^\prime y^\prime g^{\prime -1})^\circ.$ Similarly the other inclusion follows.
Hence the map is injective. 
\end{proof}
\textbf{Remark:} By Theorem 2.4, the genus number of $GL_n(k)$ is equal to the genus number of $SL_n(k).$

\subsection*{Disconnected centralizers}
In general, for a connected semisimple group we can derive a necessary and sufficient condition for connectedness of centralizers of
semisimple elements.
Let $G$ be a connected semisimple algebraic group, with the simply connected cover $\widetilde{G}$ and 
 $\rho: \widetilde{G}\longrightarrow G$ 
be the covering map. Let $T\subset G$ be a fixed 
maximal torus. Consider $t\in T$ and let $\rho^{-1}(t)= \{\tilde{t_1},...,\tilde{t_l}\} \subset \widetilde{G}.$ 
Then we have the following:

\begin{theorem}
 Fix a lift $\tilde{t_1}\in \widetilde{G}$ of $t\in T$. Then $Z_G(t)$ is disconnected if and only if there exists $\tilde{g}\in 
\widetilde{G}$ such that, $\tilde{g}\tilde{t_1}\tilde{g}^{-1}= \tilde{t_i},$ for some $i\neq 1.$ 
\end{theorem}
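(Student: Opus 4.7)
The plan is to establish both implications by lifting group elements through $\rho$ and using the facts that $\ker \rho \subseteq Z(\widetilde{G})$ and, from Proposition 2.2 / Lemma 2.2, that centralizers of semisimple elements in $\widetilde{G}$ are connected with $\rho\bigl(Z_{\widetilde{G}}(\tilde{x})\bigr) = Z_G(x)^\circ$ (under the characteristic assumption implicit in this section).

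For the forward direction, I would assume $Z_G(t)$ is disconnected and pick an element $g \in Z_G(t)$ lying in a component other than the identity component. Choosing any lift $\tilde{g}\in\widetilde{G}$ of $g$, the element $\tilde{g}\tilde{t_1}\tilde{g}^{-1}$ satisfies $\rho(\tilde{g}\tilde{t_1}\tilde{g}^{-1}) = gtg^{-1} = t$, so it lies in $\rho^{-1}(t)$ and hence equals some $\tilde{t_i}$. If $i = 1$ then $\tilde{g}\in Z_{\widetilde{G}}(\tilde{t_1})$, which gives $g = \rho(\tilde{g})\in \rho\bigl(Z_{\widetilde{G}}(\tilde{t_1})\bigr)=Z_G(t)^\circ$, contradicting the choice of $g$. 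Hence $i\neq 1$.

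For the converse, suppose some $\tilde{g}\in\widetilde{G}$ satisfies $\tilde{g}\tilde{t_1}\tilde{g}^{-1} = \tilde{t_i}$ with $i\neq 1$. Set $g=\rho(\tilde{g})$; then $gtg^{-1}=t$, so $g\in Z_G(t)$, and the goal is to show $g\notin Z_G(t)^\circ$. Assuming for contradiction that $g\in Z_G(t)^\circ$, Lemma 2.2 provides a lift $\tilde{h}\in Z_{\widetilde{G}}(\tilde{t_1})$ with $\rho(\tilde{h})=g$. Then $z:=\tilde{g}\tilde{h}^{-1}\in\ker\rho \subseteq Z(\widetilde{G})$, so writing $\tilde{g}=z\tilde{h}$ and using that $z$ is central and $\tilde{h}$ commutes with $\tilde{t_1}$,
\begin{equation*}
\tilde{t_i} \;=\; \tilde{g}\tilde{t_1}\tilde{g}^{-1} \;=\; z\tilde{h}\tilde{t_1}\tilde{h}^{-1}z^{-1} \;=\; z\tilde{t_1}z^{-1} \;=\; \tilde{t_1},
\end{equation*}
which contradicts $i\neq 1$. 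Hence $g$ lies outside $Z_G(t)^\circ$, so $Z_G(t)$ is disconnected.

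The only delicate point is making sure that the backward direction really uses the central placement of $\ker\rho$ in the right spot; the identity $\tilde{g}\tilde{t_1}\tilde{g}^{-1} = z\tilde{t_1}z^{-1}$ is the whole content, and everything else is bookkeeping. The forward direction is essentially immediate once one has the surjection $\rho(Z_{\widetilde{G}}(\tilde{t_1})) = Z_G(t)^\circ$ from Lemma 2.2, so the main conceptual obstacle is just recognising that connectivity of $Z_G(t)$ is governed precisely by whether the $\widetilde{G}$-conjugacy class of $\tilde{t_1}$ inside $\rho^{-1}(t)$ is a singleton.
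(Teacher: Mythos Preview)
Your proof is correct and follows essentially the same approach as the paper's. The forward direction is identical; for the converse, the paper phrases the argument via a partition $Z_G(t)=\bigcup_j S_j$ with $S_j=\{x\in Z_G(t):\tilde{x}\tilde{t_1}\tilde{x}^{-1}=\tilde{t_j}\}$ and observes $S_1=\rho(Z_{\widetilde{G}}(\tilde{t_1}))=Z_G(t)^\circ$, which is exactly your contradiction argument unwound.
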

\begin{proof}
Let $Z_G(t)$ be disconnected. Therefore, there exists $g\in Z_G(t)\setminus Z_G(t)^\circ.$ Let $\tilde{g}\in \widetilde{G}$ be a lift
of $g.$ Observe that $\rho(\tilde{g}\tilde{t_1}\tilde{g}^{-1})= gtg^{-1}=t.$ So, $\tilde{g}\tilde{t_1}\tilde{g}^{-1}\in \rho^{-1}(t).$
Also note that $\tilde{g}\tilde{t_1}\tilde{g}^{-1}\neq \tilde{t_1}.$ For else, $\tilde{g}\in Z_{\widetilde{G}}(\tilde{t_1}),$ which 
implies $\rho(\tilde{g})\in \rho(Z_{\widetilde{G}}(\tilde{t_1}))$ $\Rightarrow$ $g\in Z_G(t)^\circ$ (since $Z_{\widetilde{G}}(\tilde{t_1})$
is connected). Hence $\tilde{g}\tilde{t_1}\tilde{g^{-1}}= \tilde{t_i}$ for some $i\neq 1.$

Conversely, let there exist $\tilde{g}\in \widetilde{G},$ such that, $\tilde{g}\tilde{t_1}\tilde{g}^{-1}=\tilde{t_i},$ for some
$i\neq 1.$ Therefore $g=\rho(\tilde{g})\in Z_G(t).$ Define $S_j=\{x\in Z_G(t)| \tilde{x}\tilde{t_1}\tilde{x}^{-1}=\tilde{t_j}\},$ where 
$\rho(\tilde{x})=x.$ Then clearly, $Z_G(t)= \bigcup_{j=1}^n S_j.$ Note that, $S_1= \rho(Z_{\widetilde{G}}(\tilde{t_1}))=Z_G(t)^\circ$ and
by hypothesis, $S_i$ is non empty. Hence $Z_G(t)$ is not connected.
\end{proof}
In what follows, we shall compute the genus number of all the compact simply connected simple Lie groups
and simply connected simple algebraic groups of Classical type and of types $G_2$ and $F_4.$ 
\section{$A_n$}

In this section, we compute the genus number for the compact Lie group $SU(n+1)$ and the semisimple genus number of the algebraic
group $SL(n+1)$ over an algebraically closed field.
We fix a maximal torus $T$ of $SU(n+1)$ consisting of all matrices of the form 

$$\begin{bmatrix}
 z_1&{}&{}\\
 {}&\ddots&{}\\
{}&{}&z_{n+1}
\end{bmatrix},$$
where $z_i\in S^1$ and $z_1...z_{n+1} = 1$.
If we write $z_l =exp(2\pi i\gamma _l)$, then the above matrix can be represented by
the $(n+1)$-tuple $(\gamma_1,\gamma_2...,\gamma_{n+1}),$ where $\gamma_i\in \mathbb{R}/\mathbb{Z}$. The Weyl group of $SU(n+1)$ is $S_{n+1}$ 
and it acts on the diagonal maximal torus in the following way: let $\alpha \in S_{n+1}$ and $\gamma =(\gamma_1,\gamma_2,...,\gamma_{n+1})
\in T,$ then
$
\alpha^{-1}(\gamma_1,\gamma_2,...,\gamma_{n+1})$ =$ (\gamma_{\alpha(1)},\gamma_{\alpha(2)},..,$ $\gamma_{\alpha(n+1)}).$

We wish to compute the number of conjugacy classes of isotropy subgroups of $S_{n+1}$ with
respect to its action on  $T.$

Let $\gamma \in T.$ By the action of a suitable element of $S_{n+1}$ we can assume $\gamma $ to be such
that, $\gamma_1=\gamma_2=...=\gamma_{k_1};~ \gamma_{k_1+1}=...=\gamma_{k_1+k_2}~; ...;~ \gamma_{k_1+...+k_{l-1}+1}=...=\gamma_{k_1+...k_l}$ 
and 
$k_1+k_2+...k_l =n$, with $\gamma_1\neq \gamma_{k_1+1}\neq...\neq \gamma_{k_1+...+k_{l-1}+1}.$ 
Hence, for this $\gamma,$ the isotropy subgroup in $S_{n+1}$ is $S_{k_1}\times S_{k_2}\times ...\times S_{k_l}
\subset S_{n+1},$ where 
$S_{k_i}=\{\rho\in S_{n+1}|~\rho(j)=j~for~j=1,...,(k_1+...+k_{i-1}),(k_1+...+k_i+1),...,n+1\}.$
 Note that $S_{k_i} \cap S_{k_j}=\{1\}$ for $i\neq j$ and $S_{k_i}S_{k_j}= S_{k_j}S_{k_i}.$ So,
 $S_{k_i}S_{k_j}$ is a subgroup of $S_{n}$ and hence by induction $S_{k_1}...S_{k_n}$ is a subgroup of $S_{n}$.

More precisely, any element 
$\rho\in W_{\gamma},$ necessarily has a cycle decomposition of the type $(k_1,...,k_l)$, i.e. $\rho\in S_{k_1}S_{k_2}...S_{k_l}$
and conversely any element of $S_{k_1}\times S_{k_2}\times ...\times S_{k_l}$ is clearly a stabilizer of $\gamma$.
In other words, we have the following 
isomorphism :
$$
W_{\gamma}\longrightarrow S_{k_1}...S_{k_l}
$$
$$
\rho\longmapsto (\rho\mid_{k_1}.\rho\mid_{k_2}...\rho\mid_{k_l}),
$$ 
where $\rho\mid_{k_i}$ denotes the restriction of $\rho$ on to the $k_i$ many entries of $\gamma$, which are equal modulo $\mathbb{Z}.$

Let $(n_1,...,n_l)$ and $(m_1,...,m_k)$ be two ordered partitions of $n+1$ and suppose they correspond to elements 
$\gamma_1,\gamma_2\in T$
respectively. If $l=k$ and $n_i=m_i$ for all $1\leq i\leq l,$ clearly $W_{\gamma_1}=W_{\gamma_2}.$ Now suppose that
 the two partitions are different.
Then  $n_i\neq m_i$ for some $i.$ We observe that any element in
$ W_{\gamma_1}$ has a cycle type $(n_1,...,n_l)$ and any element in $W_{\gamma_2}$ has cycle type $(m_1,...,m_k)$ and since 
conjugation in $S_n$ must preserves cycle types, $W_{\gamma_1}$ is not conjugate to $ W_{\gamma_2}.$
 
Thus the number of conjugacy classes of isotropy subgroup is precisely $p(n+1)$, i.e. the number of partitions of $n+1$.

For $SL(n+1)$ over an algebraically closed field $k,$ the semisimple genus
number is similarly obtained by computing the number of isotropy subgroups of the Weyl group (up to conjugacy) with respect to
its action on a maximal torus. In this situation again we consider the diagonal maximal torus $T\subset SL(n+1)$, i.e the subgroup
of matrices of the form $diag(a_1,...,a_{n+1})$ such that $a_1...a_{n+1}=1,~a_i\in k.$ Following a similar argument as in the case of 
$SU(n+1),$ we see that the number of conjugacy classes of isotropy subgroups of Weyl group is $p(n+1).$

 We record this as :
\begin{theorem}
 The genus number of a compact simply connected Lie group 
or a simply connected algebraic group over an algebraically closed field, of type $A_n$
 is $p(n+1).$
\end{theorem}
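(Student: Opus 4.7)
The plan is to apply Theorems \ref{genus} and \ref{genus2} to reduce the problem to counting, up to conjugacy in the Weyl group $W = S_{n+1}$, the stabilizer subgroups $W_\gamma$ as $\gamma$ ranges over a fixed maximal torus $T$. For both $SU(n+1)$ and $SL(n+1)$, I would take $T$ to be the diagonal maximal torus, in which case the Weyl group is naturally identified with $S_{n+1}$ acting by permutation of the diagonal entries. This is the only place where the two cases differ at all; the combinatorial step that follows is identical.

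Next I would analyze $W_\gamma$ explicitly. Writing $\gamma = (\gamma_1, \ldots, \gamma_{n+1})$ and using the $S_{n+1}$-action to collect equal entries together, I may assume $\gamma$ has the form of $l$ consecutive constant blocks of lengths $k_1, \ldots, k_l$ with $k_1 + \cdots + k_l = n+1$ and distinct values on distinct blocks. A permutation stabilizes $\gamma$ exactly when it preserves each block setwise, so $W_\gamma$ is canonically isomorphic to the Young subgroup $S_{k_1} \times \cdots \times S_{k_l}$ of $S_{n+1}$. In particular, $W_\gamma$ depends on $\gamma$ only through the multiset $\{k_1, \ldots, k_l\}$, i.e.\ through a partition of $n+1$.

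Finally I would show that two such stabilizers are $S_{n+1}$-conjugate if and only if the associated partitions coincide. One direction is immediate, since a permutation conjugating one block decomposition to another gives the required inner automorphism. For the converse, the key observation is that the Young subgroup $S_{k_1} \times \cdots \times S_{k_l}$ contains the product of cycles of lengths $k_1, \ldots, k_l$, and this is an element of largest possible cycle support within the subgroup. Since conjugation in $S_{n+1}$ preserves cycle types, two Young subgroups corresponding to distinct partitions cannot be conjugate. Combining these observations, the number of conjugacy classes of $W_\gamma$ is exactly $p(n+1)$, and the theorem follows.

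The step that requires the most care is distinguishing non-conjugate stabilizers from distinct partitions; the author's one-line cycle-type argument is precisely the natural way to handle this, and I do not anticipate further obstacles, since everything else is a direct unwinding of the Weyl group action and the bijection established in Theorems \ref{genus} and \ref{genus2}.
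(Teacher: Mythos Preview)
Your proposal is correct and follows essentially the same route as the paper: reduce via Theorems \ref{genus} and \ref{genus2} to stabilizers in $S_{n+1}$ acting on the diagonal torus, identify each $W_\gamma$ with a Young subgroup $S_{k_1}\times\cdots\times S_{k_l}$, and distinguish non-conjugate Young subgroups by the cycle type of the product of a $k_i$-cycle from each factor. Your formulation of the non-conjugacy step is in fact slightly cleaner than the paper's (which loosely says ``any element in $W_{\gamma_1}$ has cycle type $(n_1,\ldots,n_l)$''), but the underlying argument is the same.
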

\section{$B_n$}
We consider the simply connected group $Spin(2n+1) $ and a maximal
 torus $$T=\{\prod_{i=1}^n (cos t_i-e_{2i-1}e_{2i}sin t_i):0\leq t_i\leq2\pi\}.$$ To simplify notations let us denote a typical element
of $T$ by $t=(t_1,...,t_n),$ with $0\leq t_i\leq 2\pi.$

For  a description of the Weyl group of $Spin(2n+1),$ we fix the following notation:
 
$$
t_{-i} = -t_i,\quad \rm for\quad i=1,...,n.
$$
The Weyl group of $Spin(2n+1)$ is $W=(\mathbb{Z}/2)^n\rtimes S_n,$ where $S_n$ acts on $(\mathbb{Z}/2)^n$ by permuting the coordinates.
The group $W$ can be identified with the group of permutations $\phi$ of the set $\{-n,...,-1,1,...,n\},$ which satisfy 
$\phi(-i)= -\phi(i).$
$W$  acts on the fixed maximal torus $T$ of
$Spin(2n+1)$ in the following way:
$$
\phi(t_1,...,t_n) = (t_{\phi^{-1}(1)},...,t_{\phi^{-1}(n)}),
$$
where $\phi\in W$ and $(t_1,...,t_n)\in T$. 

\noindent\textbf{A useful interpretation:} The action of $W$ on the maximal
torus of $Spin(2n+1)$ can be described in the following way:\\
An element $\phi\in G(n)$ acts on a toral element $t\in T$ by permuting the parameters and changing the sign of some of them.
 If $\phi= (\alpha,\beta),$ with $\alpha\in (\mathbb{Z}/2)^n$ and $\beta\in S_n$, then $\beta$ permutes the parameters of $t$ and 
$\alpha$ changes the signs of the parameters.

In order to compute the number of conjugacy classes of isotropy subgroups of $W,$
 we start with an element $t=(t_1,...,t_n)\in T$ and find the isotropy subgroup $W_t.$

Let $n=n_1+...+n_k,$
where, $t_i=0$ or $\pi,$ for $i=1,...,n_1^\prime,$ $t_i=\pi/2$ or $3\pi/2,$ for $i=n_1^\prime+1,...,n_1, $ 
and $t_i\neq 0,\pi,\pi/2,3\pi/2$ for $i\geq n_1+1.$ The remaining integers $n_2,...,n_k$ denote the number of parameters which are equal.

Note that, for $i=1,...,n_1^\prime,$ a non- trivial $(\mathbb{Z}/2)^n$ action on $t_i$ fixes the factor $(cos t_i-e_{2i-1}e_{2i}sin t_i),$
which is $1$ or $-1$ according as $t_i=0$ or $\pi.$ However, for $i=n_1^\prime+1,...,n_1,$ a non-trivial 
$(\mathbb{Z}/2^n)$ action on $t_i$ 
inverts the factor $(cos t_i-e_{2i-1}e_{2i}sin t_i),$ which is $ e_{2i-1}e_{2i}$ or $-e_{2i-1}e_{2i},$ 
according as $t_i=\pi/2$ or $3\pi/2.$ For the 
rest of the parameters, only the $S_n$ part of the Weyl group contributes to the isotropy. Therefore
the isotropy subgroup for such an element of $T$ is
$$
((\mathbb{Z}/2)^{n_1^\prime}\rtimes S_{n_1^\prime})\times ((\mathbb{Z}/2)^{n_1-n_1^\prime-1}\rtimes 
S_{n_1-n_1^\prime})\times S_{n_2}\times...\times S_{n_k}, 
$$

Therefore for each choice of $n_1$ we have $(n_1+1)p(n-n_1)$ many isotropy subgroups (non-conjugate).\\
Hence the total number of conjugacy classes of isotropy subgroups of 
$W$ for $SO(2n+1)$ is $$\sum_{i=0}^n (i+1)p(n-i).$$

When we consider $Spin(2n+1)$ over an algebraically closed field $k,$ we take a maximal torus
$T=\{\prod_{i=1}^n(t_i^{-1}+(t_i-t_i^{-1}e_{2i-1}e_{2i}),  t_i\in k^\ast\}.$ We can calculate the number
of conjugacy classes of isotropy subgroups of the Weyl group using similar arguments.

We record this discussion as:
\begin{theorem}
 The genus number of a compact simply connected Lie group or a simply connected  algebraic group over an 
algebraically closed field, of type $B_n$ is  $\sum_{i=0}^n (i+1)p(n-i).$
\end{theorem}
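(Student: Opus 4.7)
The approach is to use the corollary to Theorem \ref{genus2}, which reduces the genus number to the number of conjugacy classes of isotropy subgroups of the Weyl group $W = (\mathbb{Z}/2)^n \rtimes S_n$ for its action on a maximal torus $T$ of $Spin(2n+1)$.

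First, I would fix the explicit maximal torus parametrized by $(t_1, \ldots, t_n) \in [0, 2\pi)^n$ via the Clifford-algebra formula $\prod_i (\cos t_i - e_{2i-1}e_{2i}\sin t_i)$, and describe the $W$-action as signed permutations: each element acts by permuting the parameters $t_i$ and possibly changing the signs of some of them. Then, for a general $t \in T$, I would compute the isotropy $W_t$ by classifying the coordinates into three regimes: the \emph{fixed} coordinates with $t_i \in \{0, \pi\}$, where the factor is $\pm 1$ and any sign change on this coordinate preserves the Spin element; the \emph{inversion} coordinates with $t_i \in \{\pi/2, 3\pi/2\}$, where the factor is $\pm e_{2i-1}e_{2i}$ (of order $4$) and only even-parity sign changes among such coordinates preserve the product; and the \emph{generic} coordinates $t_i \notin \{0, \pi/2, \pi, 3\pi/2\}$, where only permutations among equal-valued coordinates stabilize.

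Letting $n_1'$ count the fixed coordinates, $n_1 - n_1'$ the inversion coordinates, and the remaining $n - n_1$ generic coordinates partition as $(n_2, \ldots, n_k)$, the stabilizer takes the explicit form
\[
((\mathbb{Z}/2)^{n_1'} \rtimes S_{n_1'}) \times ((\mathbb{Z}/2)^{n_1 - n_1' - 1} \rtimes S_{n_1 - n_1'}) \times S_{n_2} \times \cdots \times S_{n_k}.
\]
I would then enumerate conjugacy classes: distinct data $(n_1', n_1 - n_1')$ together with an unordered partition of $n - n_1$ yield non-conjugate stabilizers, since the sizes and types of the component blocks are invariants of conjugation in $W$. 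For each fixed $n_1 \in \{0, \ldots, n\}$ there are $(n_1 + 1) p(n - n_1)$ such choices, so summing gives $\sum_{i=0}^n (i+1) p(n-i)$. The algebraic-group case then follows by the identical argument, using the multiplicative parametrization $\prod_i (t_i^{-1} + (t_i - t_i^{-1}) e_{2i-1}e_{2i})$ over $k^\ast$.

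The main obstacle I anticipate is the precise derivation of the isotropy in the inversion block: one must verify by a Clifford-algebra computation that sign-changing an even number of $\{\pi/2, 3\pi/2\}$-type coordinates preserves the Spin element (the two central sign flips cancel) while an odd number does not, which accounts for the factor $(\mathbb{Z}/2)^{n_1 - n_1' - 1}$ rather than $(\mathbb{Z}/2)^{n_1 - n_1'}$. A secondary but important check is that parameter tuples with different $0$--$\pi$ mixes inside the fixed-type block produce Spin elements with $W$-conjugate stabilizers, so that they do not contribute additional orbit types beyond those already counted by the choice of $n_1'$.
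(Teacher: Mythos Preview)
Your proposal is correct and follows essentially the same route as the paper: the same Clifford-algebra torus, the same trichotomy of parameters into the $\{0,\pi\}$, $\{\pi/2,3\pi/2\}$, and generic blocks, the same isotropy formula, and the same enumeration giving $(n_1+1)p(n-n_1)$ classes for each $n_1$. Your explicit identification of the parity constraint in the inversion block (yielding the exponent $n_1-n_1'-1$) is in fact more carefully articulated than the paper's own discussion, which simply records the resulting stabilizer without spelling out the cancellation of central signs.
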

\begin{corollary} 
The connected genus number of $SO(2n+1)$ is equal to the genus number of $Spin(2n+1).$
 
\end{corollary}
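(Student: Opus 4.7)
The plan is to deduce this corollary as an immediate consequence of Theorem 2.3 applied to the pair $SO(2n+1)$, $Spin(2n+1)$, combined with the explicit count already established in Theorem 4.1 above. First I would recall that $Spin(2n+1)$ is the simply connected cover of the connected semisimple group $SO(2n+1)$, with covering map $\rho$ whose kernel is the central subgroup $\{\pm 1\}$. This places us squarely in the hypotheses of Theorem 2.3, so no additional structural verification is needed.

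Next I would invoke Theorem 2.3, which yields a bijection
$$\{[Z_{Spin(2n+1)}(\tilde t)] : \tilde t\in \widetilde T\}\longleftrightarrow \{[Z_{SO(2n+1)}(\rho(\tilde t))^\circ] : \tilde t\in \widetilde T\}.$$
Since every semisimple element of either group is conjugate to an element of a fixed maximal torus, the two sides of this bijection count precisely the genus number of $Spin(2n+1)$ and the connected genus number of $SO(2n+1)$ respectively. Thus the two invariants coincide, and Theorem 4.1 supplies the explicit common value $\sum_{i=0}^n (i+1)p(n-i)$.

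The only point I expect to require any care is the hypothesis of Lemma 2.2, implicitly used by Theorem 2.3, that the characteristic of the base field not divide $|C'(SO(2n+1))|=2$. In the compact Lie setting this is vacuous; in the algebraic setting one must either restrict to char$(k)\neq 2$ or argue the remaining case separately (noting that the remark after Lemma 2.2 shows the separability hypothesis genuinely matters in characteristic $2$ already for $PSL_2$). With this caveat, the corollary follows directly from the machinery developed in Section 2, and no new combinatorial or Weyl-group analysis beyond Theorem 4.1 is required.
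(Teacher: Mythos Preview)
Your proposal is correct and takes essentially the same approach as the paper, which simply writes ``Follows from Theorem 2.3.'' Your additional remark about the characteristic~$2$ hypothesis inherited from Lemma~2.2 is a valid and careful observation that the paper itself leaves implicit.
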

\begin{proof}
 Follows from Theorem 2.3.
\end{proof}

\section{$C_n$}
Let k be an algebraically closed field. The symplectic group over $k$ of rank $n,$ is defined as
$Sp(n,k):=\{A\in GL_{2n}(k): A^tJA=J\},$ where $J=\begin{bmatrix}
                                                 0&-I\\
                                                 I&0
                                                \end{bmatrix},
$ $I$ being the identity matrix in $GL_n(k).$ $Sp(n,k)$ is the simply connected algebraic group of type $C_n.$

When $k=\mathbb{C},$ the field of complex numbers, $Sp(n,\mathbb{C})$ is the complex symplectic group of rank $n.$ The compact simply 
connected Lie group of type $C_n,$ denoted by $Sp(n)$ is defined as follows: 
let $U(n)$ denote the group of $n\times n$ unitary matrices. Define
 $Sp(n):= \{A\in U(2n):A^tJA= J\},$
where $J=$ $\begin{bmatrix}
           0&-I\\
           I&0
          \end{bmatrix}$, $I$ is the identity matrix in $GL_n(\mathbb{C}).$ Therefore, $Sp(n)= Sp(2n,\mathbb{C})\cap U(2n).$
 We have the inclusion $U(n)\longrightarrow Sp(n),$ given by $A\mapsto \begin{bmatrix}
                                                                              A&0\\
                                                                              0&\overline{A}
                                                                            \end{bmatrix}$.\\
Consider the maximal torus $T(n)=\left\{\begin{bmatrix}
                                z_1&{}&{}\\
                                 {}&\ddots&{}\\
                                  {}&{}&z_n
                               \end{bmatrix}\in U(n): z_i\in S^1\right\}\subset U(n).$
Then the image of this maximal torus in $Sp(n)$ under the above inclusion gives a maximal torus $T\subset Sp(n)$,
 a typical element of which 
is of the form,
$$
t= \begin{bmatrix}
      z_1&{}&{}&{}\\
      {}&z_2&{}&{}\\
      {}&{}&\ddots&{}\\
      {}&{}&{}&z_n&&{}&{}\\
      {}&{}&{}&{}&\overline{z_1}&{}&{}&{}\\
      {}&{}&{}&{}&{}&{}\overline{z_2}&{}&{}\\
      {}&{}&{}&{}&{}&{}&{}&\ddots&{}\\
      {}&{}&{}&{}&{}&{}&{}&{}&\overline{z_n}
\end{bmatrix}.
$$
Let $z_k= exp(2\pi it_k).$ Then we can represent each $t\in T$ by an $n$-tuple $(t_1,...,t_n),$ where $t_k\in \mathbb{R/Z}.$

The Weyl group of $Sp(n)$ is $W=(\mathbb{Z}/2)^n\rtimes S_n$,
where $S_n$ acts on $(\mathbb{Z}/2)^n$ by permuting the coordinates, as noted in Section 4.
The action of $W$ on $T$ is given by, $\phi(t_1,...,t_n) = (t_{\phi^{-1}(1)},...,t_{\phi^{-1}(n)}),$ 
where $\phi\in W$ and $(t_1,...,t_n)\in T.$ We follow the same convention: $t_{-i}=-t_i,$ for $i=1,...,n$ (see Section 4).

To compute the isotropy subgroup of $t\in T$ in $W,$ first note that, if $t_i=0$ or $1/2,$ a non-trivial
$(\mathbb{Z}/2)^n$ action fixes $t_i.$ Therefore, 
we can assume without loss of generality that, $t_i\neq-t_j$ unless 
$t_i=t_j=0,1/2.$ For, if there exist $t_i=-t_j$ for some $i,j$ with $t_i,t_j\neq 0, 1/2$ then we can change the sign of $t_j$ by suitable
element from $(\mathbb{Z}/2)^n.$

Let $n=n_1+...+n_k$ be a partition of $n$ with $n_1$ being the total number of $0^,$s and $1/2^,$s and $n_2,...,n_k$ are 
the sizes of the blocks of parameters $t_i$ which are equal. The isotropy subgroup for
this particular $t$ is
$$((\mathbb{Z}/2)^i\rtimes S_i)\times ((\mathbb{Z}/2)^{n_1-i}\rtimes S_{n_1-i})\times S_{n_2}\times...\times S_{n_k},$$
where $i$ and $n_1-i$ respectively denote the number of $0^,$s and $1/2^,$s in $t$. Therefore for this partition of $n,$ we have $([n_1/2]+1)p(n-n_1)$ many distinct isotropy 
subgroups (by varying the number of $0^,$s). Hence the total number of conjugacy classes of isotropy subgroups is
$$\sum_{i=0}^n ([i/2]+1)p(n-i).$$\\

Over an algebraically closed field $k$,
the diagonal maximal torus of $Sp(n)$ can again be parametrized by $n$ coordinates $(a_1,...,a_n)$ $a_i\in k^*$. The 
calculation for genus number follows exactly as above.Thus we have the following:
\begin{theorem}
 The genus number of a compact simply connected Lie group 
or a simply connected algebraic group over an algebraically closed field, of type $C_n$
 is $\sum_{i=0}^n ([i/2]+1)p(n-i)$.
\end{theorem}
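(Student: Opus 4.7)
My plan is to apply Theorem \ref{genus} (compact case) and Theorem \ref{genus2} (algebraic case) to replace the problem of counting conjugacy classes $[Z_G(t)]$ by that of counting conjugacy classes $[W_t]$ of stabilizers in the Weyl group $W = (\mathbb{Z}/2)^n \rtimes S_n$, acting on the parameters $(t_1, \ldots, t_n)$ of the maximal torus $T$ by signed permutations as described just before the theorem. The remaining work is combinatorial and parallels the $B_n$ discussion of Section 4, but is simpler because only two values of $t_i$, namely $t_i = 0$ and $t_i = 1/2$, are fixed by a non-trivial sign change.

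The first step is to bring $t$ into a normal form under $W$: using permutations, I collect the coordinates with values in $\{0, 1/2\}$ into an initial block $t_1, \ldots, t_{n_1}$ and group the remaining coordinates into contiguous blocks of equal value with sizes $n_2, \ldots, n_k$; using sign changes, I further arrange that $t_i + t_j \neq 0$ for all $i, j > n_1$. If $i$ of the special coordinates equal $0$ and $n_1 - i$ equal $1/2$, a direct check gives
\[
W_t = \bigl((\mathbb{Z}/2)^i \rtimes S_i\bigr) \times \bigl((\mathbb{Z}/2)^{n_1 - i} \rtimes S_{n_1 - i}\bigr) \times S_{n_2} \times \cdots \times S_{n_k},
\]
since sign changes act trivially on $0$ and $1/2$, permutations within each equal-value block fix $t$, and no other element of $W$ fixes $t$ in view of the normalization.

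To count $W$-conjugacy classes of such stabilizers, I observe that for fixed $n_1$ and fixed tail partition $(n_2, \ldots, n_k)$, the groups $W_t$ corresponding to different values of $i$ are $W$-conjugate exactly when the unordered pair $\{i, n_1 - i\}$ agrees, because a permutation in $W$ can interchange the supports of the two signed-permutation factors only when they have the same size. This gives $[n_1/2] + 1$ classes per fixed $n_1$ and tail; multiplying by $p(n - n_1)$, the number of tail partitions of $n - n_1$, and summing over $n_1$ yields $\sum_{i=0}^{n} ([i/2] + 1)\, p(n - i)$. The algebraic case is identical after replacing $\mathbb{R}/\mathbb{Z}$-parameters with $k^{\ast}$-parameters and $\{0, 1/2\}$ with $\{\pm 1\}$.

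The main obstacle is the non-conjugacy half of the count: I must show that the stabilizers coming from distinct triples $(n_1, \{i, n_1 - i\}, \{n_2, \ldots, n_k\})$ are pairwise non-conjugate in $W$. This will be deduced by recovering each invariant intrinsically from $W_t$: the integer $n_1$ equals the rank of the subgroup $W_t \cap (\mathbb{Z}/2)^n$ of pure sign changes in $W_t$; the unordered pair $\{i, n_1 - i\}$ equals the set of orbit sizes of the image of $W_t$ in $S_{n_1}$ acting on the support of $W_t \cap (\mathbb{Z}/2)^n$; and the multiset $\{n_2, \ldots, n_k\}$ is read off from the orbit sizes of $W_t$ on the complementary $n - n_1$ coordinates. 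All three are evidently $W$-conjugation invariants, which will complete the argument.
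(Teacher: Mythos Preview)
Your proposal is correct and follows essentially the same route as the paper: reduce to stabilizers in $W=(\mathbb{Z}/2)^n\rtimes S_n$ via Theorems \ref{genus} and \ref{genus2}, normalize $t$ so that the ``special'' coordinates $0,1/2$ (resp.\ $\pm 1$) occupy an initial block of size $n_1$ and the remaining coordinates satisfy $t_i\neq -t_j$, read off $W_t$ as a product of two signed-permutation factors and a tail of symmetric groups, and count $([n_1/2]+1)\,p(n-n_1)$ classes for each $n_1$. Your treatment of the non-conjugacy step is in fact more explicit than the paper's: the paper simply asserts that the listed stabilizers are distinct up to conjugacy, whereas you recover $n_1$, the unordered pair $\{i,n_1-i\}$, and the tail multiset $\{n_2,\ldots,n_k\}$ as $W$-conjugation invariants of $W_t$ (using that $(\mathbb{Z}/2)^n$ is normal in $W$), which cleanly justifies the count.
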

\section{ $D_n$}
Here, as in the case of $Spin(2n+1),$ we work with the maximal torus 
$T=\{\prod_{i=1}^n (cos t_i-e_{2i-1}e_{2i}sin t_i):0\leq t_i\leq2\pi)\}.$ The 
Weyl group is $W= (\mathbb{Z}/2)^{n-1}\rtimes S_n,$ the subgroup of even permutations in the Weyl group of $Spin(2n+1)$ 
and it acts on a typical element $(t_1,...,t_n)\in T,$ by permuting the entries
and changing the signs of even number of them.
We discuss two separate cases:\\
\textbf{Case 1:} $n$ is odd.

Let $t=(t_1,...,t_n)\in T$ be an arbitrary element of the torus. As in the case of $B_n,$ we consider a partition of $n$ as
 $n=n_1+...+n_k$, where the $n_i'$s are as in $\S 4.$ Thus looking at the torus element  $t,$ we can read off the isotropy subgroup, which 
is

$$
((\mathbb{Z}/2)^{n_1^\prime-1}\rtimes S_{n_1^\prime}))\times
 ((\mathbb{Z}/2)^{n_1-n_1^\prime-1}\rtimes S_{n_1-n_1^\prime})\times S_{n_2}\times...\times S_{n_k},
$$

Thus for each $n_1$ the number of non-conjugate isotropy subgroups is $([n_1/2]+1)p(n-n_1)$. This is because the 
number of partitions of $n_1$ which give non -conjugate isotropy subgroups for a fixed choice of $n_2,..,n_3$ is $[n_1/2].$ 
Hence the total number is
$$\sum_{i=0}^n ([i/2]+1)p(n-i).$$\\

\noindent\textbf{Case 2:} $n$ is even.

First let us investigate the following situation:
$t=(t_1,...,t_n)\in T$, where $t_1=...=t_{n-1}=-t_n$ and $t_i\neq 0,\pi,\pi/2,3\pi/2,$ for $1\leq i\leq n.$
We have the Weyl group $W=(\mathbb{Z}/2)^{n-1}\rtimes S_n$.
 The action of an element $(\tau,\rho)\in W$ on any $t\in T$ is given by,
$$
(\tau,\rho) (t_1,...,t_n) = (t_{(\rho)^{-1} (\tau)^{-1}(1)},...,t_{(\rho)^{-1} (\tau)^{-1}(n)}),
$$

If $(\tau,\rho)\in W_t,$ then $(\tau,\rho) (t_1,...,t_n) = (t_{(\rho)^{-1} (\tau)^{-1}(1)},...,t_{(\rho)^{-1} (\tau)^{-1}(n)})
=(t_1,...,t_n).$ Therefore, \\
\textbf{(a)} if $\rho (n)= n$ then $\tau=(0,...,0)\in (\mathbb{Z}/2)^{n-1}$\\
\textbf{(b)} if $\rho(n)=i \neq n$ then necessarily $\tau$ is an $n$-tuple with $1$ at the $n$-th and
$\rho(n)$-th positions and $0$ everywhere else.

The isotropy subgroup of $t$ therefore has exactly $n!$ many elements and as we will see, is not conjugate to
$S_n$ (since $S_n$ is the only other isotropy subgroup of order $n!$).

Let if possible $(\tau,\rho)\in W$ be such that $$(\tau,\rho)S_n(\tau,\rho)^{-1} = W_t.$$
Then, for an arbitrary $(1,\sigma)\in S_n\subset W$ we have,\\
\begin{align*}
&(\tau,\rho)(1,\sigma)(\rho^{-1}(\tau),\rho^{-1})\\
=&(\tau,\rho\sigma)(\rho^{-1}(\tau),\rho^{-1})\\
=&(\tau\rho\sigma\rho^{-1}(\tau),\rho\sigma\rho^{-1})\quad\in W_t.  
\end{align*} 
Note that $\tau$ cannot be $(0,...,0)$ or $(1,...,1)$ because in that case $\tau\rho\sigma\rho^{-1}(\tau)$ is necessarily equal
to $(0,...,0)$ for any chosen $\sigma$; and we can suitably choose a $\sigma\in S_n$ such that $\rho\sigma\rho^{-1}(n)\neq n$, in 
which case the above element cannot belong to $W_t$. Thus $\tau$ must contain both $0$ and $1$ as its parameters.
Moreover, since $(\tau,\rho)\in W$, $\tau$ must be a permutation changing an even number of signs. Since there is 
at least one $1$ in the $n$-tuple representing $\tau$, there must be at least two of them. Similar argument holds 
for the number of $0^,$s occurring in $\tau.$ Now let the $n$-th and the $i$-th positions in $\tau$ be $1.$ 
Then we simply choose a suitable $\sigma$
such that $\rho\sigma\rho^{-1}= (1~ n)$ (the transposition flipping $1$ and $n$).
 This shows that the element $(\tau\rho\sigma\rho^{-1}(\tau),\rho\sigma\rho^{-1})\notin
W_t$ because $\tau\rho\sigma\rho^{-1}(\tau)= (1,...1)$ in this case again.

With this in hand, we carry out the computation for the number of conjugacy classes in a way similar to that of $Spin(2n+1).$ 
If $n=n_1+...+n_k$ is a partition consisting of at least one odd integer, then
by the action of a suitable Weyl group element the computation can be carried out as in Case 1.

If the partition $n=n_1+...+n_k$ consists of only even integers, and also let us assume that none of the parameters are $0$ or $\pi,$ then
we can have the following possibility:\\
$t_1=...=t_{n_1-1}=-t_{n_1}$ and the remaining blocks containing equal parameters with $t_i\neq -t_j$ for $n_1< i,j\leq n.$ By the 
argument at the beginning of Case 2, the isotropy subgroup for such an element is obtained as: Let $n=2l.$
If $l=l_1+...+l_k$ , then
$W_t=H_{2k_1}.S_{2k_2}...S_{2k_l},$ where $H_{2k_1}$ is a subgroup of order $(2k_1)!$ as described in the beginning of Case 2.

So if $n=2l$ then the total number of conjugacy classes of isotropy subgroups is :
\begin{align*}
 &(\sum_{i=1}^n ([i/2]+1)p(n-i)) +p(n)-p(l)+2p(l)\\
=&(\sum_{i=0}^n ([i/2]+1)p(n-i)) +p(l).
\end{align*}

As noted in the previous section, over an algebraically closed field, the number
of conjugacy classes of isotropy subgroups of the Weyl group can be obtained exactly as above.
Thus we
have the following theorem:
\begin{theorem}
 The genus number of a compact simply connected Lie group
or a simply connected algebraic group over an algebraically closed field, of type $D_n$ 
 is\\
$\sum_{i=0}^n ([i/2]+1)p(n-i)$ for $n$ odd and\\
$\sum_{i=0}^n ([i/2]+1)p(n-i)) +p(l)$ for $n=2l$.
\end{theorem}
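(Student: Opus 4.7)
The plan is to apply the Corollary after Theorem \ref{genus2} to reduce the genus number of $Spin(2n)$ to the number of conjugacy classes of isotropy subgroups for the action of $W = (\mathbb{Z}/2)^{n-1} \rtimes S_n$ on a maximal torus $T$, parametrizing a typical torus element as $t = (t_1, \ldots, t_n)$. Writing $n = n_1 + n_2 + \cdots + n_k$ with $n_1'$ parameters equal to $0$ or $\pi$, $n_1 - n_1'$ parameters equal to $\pi/2$ or $3\pi/2$, and blocks of sizes $n_2, \ldots, n_k$ of equal generic parameters, I would compute $W_t$ by intersecting the analogous $B_n$ stabilizer (already worked out in the previous section) with the index-two subgroup of even sign changes. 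The intersection removes one $\mathbb{Z}/2$ factor from each of the two special blocks, producing $W_t = ((\mathbb{Z}/2)^{n_1' - 1} \rtimes S_{n_1'}) \times ((\mathbb{Z}/2)^{n_1 - n_1' - 1} \rtimes S_{n_1 - n_1'}) \times S_{n_2} \times \cdots \times S_{n_k}$.

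For odd $n$ (Case 1), the two leading factors are now symmetric in $(n_1', n_1 - n_1')$, so the ordered count $n_1 + 1$ from the $B_n$ analysis collapses to the unordered count $[n_1/2] + 1$; multiplying by $p(n - n_1)$ for the partition of the remaining entries and summing over $n_1$ gives $\sum_{i=0}^n ([i/2] + 1)\, p(n-i)$. I would justify that the abstract symmetry between the two factors is realized by $W$-conjugation (using a permutation that moves a type-I block to a type-II block position), and that distinct unordered pairs give non-conjugate subgroups because $(\mathbb{Z}/2)^{a-1}\rtimes S_a$ and $(\mathbb{Z}/2)^{b-1}\rtimes S_b$ have distinct orders whenever $a \neq b$.

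For even $n = 2l$ (Case 2), a further family of isotropy subgroups appears when the partition of $n$ uses only even parts. Following the strategy in the excerpt, I would first analyze the configuration $t_1 = \cdots = t_{n-1} = -t_n$ with generic $t_i$ and show its isotropy $H_n$ has order $n!$ but is not $W$-conjugate to $S_n$: any supposed conjugator $(\tau,\rho)$ must have $\tau$ with both $0$s and $1$s, and selecting $\sigma \in S_n$ with $\rho \sigma \rho^{-1} = (1\; n)$ forces $\tau \rho\sigma\rho^{-1}(\tau) = (1,\ldots,1)$, which cannot lie in $H_n$. Extending the block analysis to configurations with exactly one block in twisted form while the other even blocks remain ordinary, I would argue that each all-even partition of $n$ contributes precisely one additional conjugacy class; since all-even partitions of $n$ are in bijection with partitions of $l$, the total count increases by $p(l)$, yielding $\sum_{i=0}^n ([i/2] + 1)\,p(n-i) + p(l)$.

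The main obstacle will be the non-conjugacy bookkeeping in Case 2: one must rule out coincidences among the twisted isotropy subgroups arising from different all-even partitions and also between twisted subgroups and the ordinary ones counted in Case 1, and check that twisting two or more blocks simultaneously produces a subgroup already conjugate to one of the previous cases (because a pair of single sign flips combines into an even sign change lying in $W$). Once that is settled, the passage from the compact $Spin(2n)$ to a simply connected algebraic group of type $D_n$ over any algebraically closed field is immediate: the diagonal torus is parametrized by $(a_1,\ldots,a_n) \in (k^\ast)^n$, the Weyl group action by inversion and permutation is identical, and the entire case analysis transports verbatim.
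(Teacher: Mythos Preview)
Your proposal is correct and follows essentially the same approach as the paper: the same torus parametrization, the same partition $n = n_1 + \cdots + n_k$ with the special block $n_1$ split as $n_1' + (n_1 - n_1')$, the same isotropy formula obtained by intersecting the $B_n$ stabilizer with the even-sign-change subgroup, the same $[n_1/2]+1$ collapse in Case~1, and the same non-conjugacy argument for $H_n$ versus $S_n$ in Case~2 via the conjugator $(\tau,\rho)$ and the choice $\rho\sigma\rho^{-1} = (1\;n)$. If anything, your plan is slightly more explicit than the paper about why only one twisted block need be considered (your remark that two simultaneous sign flips combine into an element of $W$) and about the non-conjugacy bookkeeping among different all-even partitions, points the paper treats rather briefly.
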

\begin{corollary}
 The connected genus number of $SO(2n)$ is equal to the genus number of $Spin(2n).$

\end{corollary}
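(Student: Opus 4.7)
The plan is to deduce this corollary as an immediate consequence of Theorem 2.3, exactly parallel to how Corollary 4.1 was derived in the $B_n$ case. Since $Spin(2n)$ is the simply connected cover of $SO(2n)$ via the standard two-fold covering $\rho \colon Spin(2n) \to SO(2n)$ with kernel $\{\pm 1\} \subset Z(Spin(2n))$, Theorem 2.3 supplies a bijection
$$
\{[Z_{Spin(2n)}(\tilde t)] : \tilde t \in \widetilde T\} \longrightarrow \{[Z_{SO(2n)}(x)^\circ] : x \in T\},
$$
where $\widetilde T \subset Spin(2n)$ is a fixed maximal torus and $T = \rho(\widetilde T)$ is the induced maximal torus in $SO(2n)$.

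To upgrade this into a statement about the full groups, I would invoke the same torus-conjugation argument used in the corollary to Theorems 2.1 and 2.2: every semisimple element of a connected reductive group lies in some maximal torus, and all maximal tori are $G$-conjugate. Hence the cardinality of the left-hand side above coincides with the genus number of $Spin(2n)$, while the cardinality of the right-hand side coincides with the connected genus number of $SO(2n)$. Combining these two observations with the bijection of Theorem 2.3 yields the claim.

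The only mild point to monitor is the characteristic hypothesis of Lemma 2.2 feeding into Theorem 2.3, which requires that $\text{char}(k)$ not divide $|C(G)|$. Since $|\ker \rho| = 2$ in the present situation, one needs $\text{char}(k) \neq 2$ in the algebraic setting; in the compact Lie group case the condition is vacuous. Beyond verifying this, no additional case analysis is required, and in particular the even/odd dichotomy in $n$ that complicated the explicit enumeration of isotropy classes in the main body of Section 6 plays no role here, because the bijection of Theorem 2.3 is purely structural and indifferent to that distinction. There is no real obstacle; the corollary is a one-line consequence of the machinery already built in Section 2.
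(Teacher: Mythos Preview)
Your proposal is correct and takes essentially the same approach as the paper: the paper's proof consists of the single line ``Follows from Theorem 2.3,'' and your argument is precisely this invocation with the details spelled out. Your additional remarks about the torus-conjugation reduction and the $\mathrm{char}(k)\neq 2$ hypothesis are accurate elaborations that the paper leaves implicit.
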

\begin{proof}
 Follows from Theorem 2.3.
\end{proof}

.

\section{$F_4$}
Let $\mathfrak{C}$ be the octonion division algebra over $\mathbb{R}$ with norm $N.$ We fix an orthogonal basis $\mathfrak{B}
=\{v_1,v_2,...,v_8\} $, where $v_1=1$, $v_6=v_2v_5$, $v_7=v_3v_5$ and
$v_8=v_4v_5$ (\textbf{[P]}, Lecture 14). Let $Spin(N)$ and $SO(N)$ respectively denote the spin group and the special orthogonal group 
of $(\mathfrak{C},N).$ With respect to the basis $\mathfrak{B},$ the matrix of the bilinear form associated with $N$ is diagonal.

Consider the $\mathbb{R}$-algebra 
$A:= H_3(\mathfrak{C}),$ consisting of all $3\times 3$ matrices of the form
$\begin{bmatrix}
  \alpha_1&c_3&\bar{c_2}\\
\bar{c_3}&\alpha_2&c_1\\
c_2&\bar{c_1}&\alpha_3
 \end{bmatrix},$ where $\alpha_i\in \mathbb{R}$, $c_i\in \mathfrak{C}$ and $x\mapsto \bar{x}$ is the canonical involution on 
$\mathfrak{C}.$
The multiplication in $A$ is given by
$$xy=(x\cdot y+y\cdot x)/2,$$
where dot denotes the standard matrix multiplication and square is the usual one with respect to the matrix product.
 
Then $Aut(A)$ is the compact connected 
Lie group of type $F_4.$ For this discussion we need an explicit embedding of $Spin(N)$ in $F_4.$ Consider the subalgebra 
$S=\mathbb{R}\times\mathbb{R}\times \mathbb{R}\subset A$. Then $Spin(N)$ sits inside $Aut(A)$ as the subgroup of all automorphisms
$\phi,$
such that $\phi(s)=s$ for all $s\in S$ (\textbf{[J]}, Theorem 6).


We first discuss an explicit description of $Spin(N)$. Let as before $\mathfrak{C}$ denote an octonion algebra over 
$\mathbb{R}$ and consider a subgroup $RT(\mathfrak{C})\subset SO(N)^3,$
defined as,
\begin{align*}
 RT(\mathfrak{C}):=\{(t_1,t_2,t_3)\in SO(N)^3|~t_1(xy)=t_2(x)t_3(y)\quad \forall x,y\in \mathfrak{C}\}
\end{align*}
Any element of $ RT(\mathfrak{C})$ is called a related triple.
For related triples and Principle of triality one may refer to \textbf{[SV]},Chapter 3.
We need the following result from \textbf{[SV]} (Proposition 3.6.3).
\begin{proposition}
 There is an isomorphism,
$$\Phi:Spin(N)\longrightarrow RT(\mathfrak{C})$$
defined by ,
$$\Phi(a_1\circ b_1\circ...\circ a_r\circ b_r)=(s_{a_1}s_{b_1}...s_{a_r}s_{b_r},l_{a_1}l_{\overline{b_1}}...l_{a_r}l_{\overline{b_r}},
r_{a_1}r_{\overline{b_1}}...r_{a_r}r_{\overline{b_r}}),$$
where $a_i,b_i\in \mathfrak{C},\prod_iN(a_i)N(b_i)=1$,$($ $N$ being the norm on the octonion algebra$)$,
$s_v$ is the reflection in the hyperplane orthogonal to $v\in \mathfrak{C}$, $l_v$ and $r_v$ are the left and right homotheties
on $\mathfrak{C}$ respectively.
\end{proposition}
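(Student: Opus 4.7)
The plan is to identify $\Phi$ with the vector cover $Spin(N)\to SO(N)$ enriched by two extra coordinates coming from the principle of triality for the octonions. First I would realize $Spin(N)$ inside the Clifford algebra $C(N)$ of $(\mathfrak{C},N)$ as the group of even-length products $a_1\circ b_1\circ\cdots\circ a_r\circ b_r$ of vectors in $\mathfrak{C}$, normalized so that $\prod_i N(a_i)N(b_i)=1$, modulo the Clifford relation $v\circ v=N(v)$. In this presentation the usual spin cover $\rho\colon Spin(N)\to SO(N)$ sends such a product to $s_{a_1}s_{b_1}\cdots s_{a_r}s_{b_r}$, which is exactly the first coordinate of $\Phi$. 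The content of the proposition is therefore that this spin cover lifts canonically to a related triple, with the two extra coordinates given by compositions of left and right homotheties on $\mathfrak{C}$.

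The heart of the argument is the triality identity: for unit vectors $a,b\in\mathfrak{C}$ and all $x,y\in\mathfrak{C}$,
$$
(s_a s_b)(xy)\;=\;\bigl(l_a l_{\overline b}(x)\bigr)\,\bigl(r_a r_{\overline b}(y)\bigr).
$$
I would prove this starting from $s_a(z)=-a\bar{z}a/N(a)$ and the Moufang identities $a(xy)a=(ax)(ya)$, $(axa)y=a(x(ay))$ and $x(aya)=((xa)y)a$ in $\mathfrak{C}$. Iterating the single-pair identity over a product of $2r$ vectors then gives
$$
(s_{a_1}\cdots s_{b_r})(xy)\;=\;\bigl((l_{a_1}\cdots l_{\overline{b_r}})(x)\bigr)\bigl((r_{a_1}\cdots r_{\overline{b_r}})(y)\bigr),
$$
so the triple on the right lies in $RT(\mathfrak{C})$ and $\Phi$ indeed takes values in the declared target.

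That $\Phi$ is well defined and a group homomorphism reduces to checking the Clifford relation coordinate by coordinate: $s_a^2=\mathrm{id}$, $l_a l_{\overline a}=N(a)\,\mathrm{id}$ and $r_a r_{\overline a}=N(a)\,\mathrm{id}$, all immediate from $a\overline a=N(a)$. For injectivity, any $g\in\ker\Phi$ satisfies $\rho(g)=\mathrm{id}$, hence $g\in\{\pm 1\}\subset Spin(N)$; the nontrivial element $-1$ acts by $-\mathrm{id}$ on $\mathfrak{C}$ in the second and third coordinates, so $\Phi(-1)\neq(\mathrm{id},\mathrm{id},\mathrm{id})$. Surjectivity follows by a dimension count: $\dim Spin(N)=28$, while local triality makes the first projection $RT(\mathfrak{C})\to SO(N)$ a surjection with two-element fibres $\{(t_1,\pm t_2,\pm t_3)\}$, so $\dim RT(\mathfrak{C})=28$ as well, and an injective Lie-group homomorphism between connected groups of equal dimension is an isomorphism. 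The main obstacle throughout is the triality identity itself, which is the single genuinely non-formal step and the only one that forces one outside the associative Clifford-algebra formalism into the specific Moufang geometry of $\mathfrak{C}$; everything else is bookkeeping.
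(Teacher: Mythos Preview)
The paper does not prove this proposition at all: it is quoted verbatim from \textbf{[SV]}, Proposition~3.6.3, and used as a black box. So there is no ``paper's own proof'' to compare against.

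That said, your outline is essentially the standard argument one finds in Springer--Veldkamp. The key computation is exactly right: from $s_a(z)=-a\bar z a/N(a)$ one gets $(s_as_b)(z)=a(\bar b\,z\,\bar b)a/(N(a)N(b))$, and then two applications of the Moufang identity $c(xy)c=(cx)(yc)$ yield
\[
a\bigl(\bar b(xy)\bar b\bigr)a=\bigl(a(\bar b x)\bigr)\bigl((y\bar b)a\bigr),
\]
which is precisely the related-triple condition for $(s_as_b,\,l_al_{\bar b},\,r_ar_{\bar b})$. Your injectivity argument via $\Phi(-1)=(\mathrm{id},-\mathrm{id},-\mathrm{id})$ is clean.

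Two small points worth tightening. First, for well-definedness you check the single relation $a\circ a=N(a)$ coordinatewise, but $Spin(N)$ sits inside the even Clifford algebra and an element can be written as a product of vectors in many ways not all reducible to that one relation; the cleanest fix is to note that once you have shown the image lies in $RT(\mathfrak C)$, the second and third coordinates are determined by the first up to a common sign, so well-definedness reduces to that of the vector representation $\rho$ together with a continuity argument fixing the sign. Second, your surjectivity step assumes $RT(\mathfrak C)$ is connected, which you have not established; one way to close this is to observe that $\Phi(-1)=(\mathrm{id},-\mathrm{id},-\mathrm{id})$ already hits the ``other'' element of every fibre of $RT(\mathfrak C)\to SO(N)$, so the image of $\Phi$ meets every fibre and, being open by the dimension count, must be everything.
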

\textbf{Remark:}
Henceforth in the subsequent discussion we shall identify the groups $Spin(N)$ and $RT(\mathfrak{C})$ via the above isomorphism.
We note that a related triple $t=(t_1,t_2,t_3)\in RT(\mathfrak{C})$ acts on an element of $A$ as;
 $t\begin{bmatrix}
  \alpha_1&c_3&\bar{c_2}\\
\bar{c_3}&\alpha_2&c_1\\
c_2&\bar{c_1}&\alpha_3
 \end{bmatrix}=\begin{bmatrix}
  \alpha_1&t_1(c_3)&t_2(\bar{c_2})\\
\overline{t_1(c_3)}&\alpha_2&t_3(c_1)\\
\overline{t_2(\bar{c_2})}&\overline{t_3(c_1)}&\alpha_3
 \end{bmatrix}$(refer to \textbf{[J]}, $\S6$).

Consider the following automorphisms of $RT(\mathfrak{C})$:
\begin{align}{\label{a}}\tau_1:(t_1,t_2,t_3)\mapsto(\hat{t_1},\hat{t_3},\hat{t_2}),\nonumber \\
\tau_2:(t_1,t_2,t_3)\mapsto(t_3,\hat{t_2},t_1),\\
\tau_3:(t_1,t_2,t_3)\mapsto(t_2,t_1,\hat{t_3}),\nonumber
\end{align}
where $\hat{t}(x)=\overline{t(\overline{x})}$, for $t\in SO(N)$ and $x\in \mathfrak{C}$.
We note the following result from \textbf{[SV]} (Proposition 3.6.4),
\begin{proposition}{\label{outspin}}
 $\tau_2$ and $\tau_3$ generate a group of automorphisms of $RT(\mathfrak{C})$ isomorphic to $S_3$
 and the non trivial elements of this group are outer
automorphisms.
\end{proposition}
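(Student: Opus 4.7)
The plan divides into the structural claim (that $\langle \tau_2,\tau_3\rangle\cong S_3$) and the outer claim, which is the more substantial part.

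For the structural claim, the approach is direct computation using the identity $\hat{\hat{t}}=t$, which is immediate from the definition $\hat{t}(x)=\overline{t(\overline{x})}$ together with $\overline{\overline{x}}=x$. First I would compute
$$\tau_2^2(t_1,t_2,t_3)=\tau_2(t_3,\hat{t_2},t_1)=(t_1,\hat{\hat{t_2}},t_3)=(t_1,t_2,t_3),$$
so $\tau_2^2=\mathrm{id}$, and analogously $\tau_3^2=\mathrm{id}$. Next I would compute $\tau_2\tau_3(t_1,t_2,t_3)=(\hat{t_3},\hat{t_1},t_2)$, apply $\tau_2\tau_3$ two more times, and observe that $(\tau_2\tau_3)^3=\mathrm{id}$ while $(\tau_2\tau_3)$ and $(\tau_2\tau_3)^2$ are both nontrivial. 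Since $\tau_2,\tau_3$ satisfy the standard presentation $\langle a,b\mid a^2=b^2=(ab)^3=1\rangle$ of $S_3$, the generated group is a quotient of $S_3$; a quick enumeration shows that the six words $\mathrm{id},\tau_2,\tau_3,\tau_2\tau_3,\tau_3\tau_2,\tau_2\tau_3\tau_2$ are pairwise distinct (compare their effects on a generic triple), giving exactly $S_3$.

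For the outer claim I would exploit triality for $Spin(8)\cong RT(\mathfrak{C})$. The three projections $p_i:RT(\mathfrak{C})\to SO(N)$, $(t_1,t_2,t_3)\mapsto t_i$, are precisely the vector and the two half-spin representations of $Spin(8)$; they are pairwise inequivalent (their kernels are the three distinct order-two subgroups of the center $Z(Spin(8))\cong\mathbb{Z}/2\times\mathbb{Z}/2$, so no two can be intertwined by an element of $SO(N)$). Now suppose for contradiction that $\tau_2$ were inner, say conjugation by some $g=(g_1,g_2,g_3)\in RT(\mathfrak{C})$. Comparing first coordinates in
$$(g_1t_1g_1^{-1},\,g_2t_2g_2^{-1},\,g_3t_3g_3^{-1})=(t_3,\hat{t_2},t_1)$$
for all related triples gives $g_1p_1(t)g_1^{-1}=p_3(t)$, which exhibits $p_3$ as conjugate (hence isomorphic as representations of $RT(\mathfrak{C})$) to $p_1$, a contradiction. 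The same comparison argument rules out $\tau_3$, $\tau_2\tau_3$, $\tau_3\tau_2$, and $\tau_2\tau_3\tau_2$, since each of these acts on the coordinates by a nontrivial element of $S_3$ and hence would force a nontrivial isomorphism among $p_1,p_2,p_3$.

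The step I expect to be genuinely delicate is pinning down the representation-theoretic distinction between $p_1,p_2,p_3$. The $\hat{\ }$ operation in the definition of $\tau_2,\tau_3$ involves an outer automorphism of $SO(N)$ (the conjugation by the canonical involution on $\mathfrak{C}$), so one must verify that even allowing this twist on individual coordinates does not produce an inner equivalence; the cleanest way is to identify the three $p_i$ with the three distinct irreducible $8$-dimensional representations of $Spin(8)$ via the isomorphism $\Phi$ of the preceding proposition, and invoke the standard fact that these have three distinct central kernels. Everything else in the proof is bookkeeping with the formulas in $(\ref{a})$.
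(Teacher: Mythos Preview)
The paper does not give its own proof of this proposition; it simply quotes the result from \textbf{[SV]}, Proposition~3.6.4. So there is no in-paper argument to compare against.

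Your argument is correct. The computation of $\tau_2^2$, $\tau_3^2$, and $(\tau_2\tau_3)^3$ is straightforward from the formulas in~(\ref{a}) and the identity $\hat{\hat t}=t$, and checking that the six words are distinct is immediate on a generic triple. For the outer claim, your use of the three projections $p_i\colon RT(\mathfrak C)\to SO(N)$ is the standard triality argument: the centre of $RT(\mathfrak C)$ is $\{(\epsilon_1,\epsilon_2,\epsilon_3):\epsilon_i=\pm1,\ \epsilon_1\epsilon_2\epsilon_3=1\}$, and $\ker p_1,\ker p_2,\ker p_3$ are the three distinct order-two subgroups, so no element of $SO(N)$ (indeed no element of $GL(\mathfrak C)$) can intertwine $p_i$ with $p_j$ for $i\neq j$. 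One small point worth making explicit: for $\tau_1=\tau_2\tau_3\tau_2$ the \emph{first} coordinate is sent to $\hat t_1$, so the first-coordinate comparison does not immediately yield a kernel mismatch; but the underlying permutation $(2\ 3)$ is nontrivial, so comparing the \emph{second} coordinate gives $g_2\,t_2\,g_2^{-1}=\hat t_3$, and since $t\mapsto\hat t$ fixes $\pm1$ this still forces $\ker p_2=\ker p_3$, a contradiction. You essentially anticipate this wrinkle in your final paragraph, and your suggested resolution via the central kernels is exactly right.
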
.

\begin{lemma}{\label{mtspin}}
 Let $T$ be a maximal torus in $SO(N).$ Then $$\widetilde{T}:=\{(t_1,t_2,t_3)\in T^3|~(t_1,t_2,t_3)~ is~  a 
~ related ~ triple\}$$ is a maximal torus in $Spin(N).$
\end{lemma}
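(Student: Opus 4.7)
The plan is to identify $\widetilde{T}$ with the preimage $\widetilde{T}_0 := \pi_1^{-1}(T)$ under the first projection $\pi_1 \colon RT(\mathfrak{C}) \to SO(N)$, $(t_1,t_2,t_3) \mapsto t_1$, which under the isomorphism $\Phi$ of Proposition 6.1 corresponds to the standard double cover $Spin(N) \to SO(N)$.

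First I would record the easy structural facts: $\widetilde{T} = RT(\mathfrak{C}) \cap T^3$ is closed, is stable under coordinatewise products and inverses (both operations preserve the related-triple relation $t_1(xy) = t_2(x)t_3(y)$), and is abelian as a subgroup of $T^3$. Next I would show that $\widetilde{T}_0$ is itself a maximal torus of $Spin(N)$, via the standard covering argument used in the proof of Proposition 2.5: $\pi_1|_{\widetilde{T}_0} \colon \widetilde{T}_0 \to T$ is surjective with finite kernel $\{\pm 1\}$, so $\dim \widetilde{T}_0 = \dim T = \mathrm{rank}(Spin(N))$; its identity component $\widetilde{T}_0^\circ$ is therefore a maximal torus of $Spin(N)$ and must contain the center $Z(Spin(N)) \supset \ker \pi_1$, forcing $\widetilde{T}_0 = \widetilde{T}_0^\circ$ to be connected.

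The crux of the argument, and the main obstacle, is to show the inclusion $\widetilde{T}_0 \subset T^3$: if $(t_1,t_2,t_3) \in RT(\mathfrak{C})$ has $t_1 \in T$, one must check $t_2, t_3 \in T$ as well. I would exploit the triality automorphisms $\tau_2, \tau_3$ of Proposition \ref{outspin}: from their definitions, $\pi_1 \circ \tau_2 = \pi_3$ and $\pi_1 \circ \tau_3 = \pi_2$, so it suffices to show that $\tau_2, \tau_3$ stabilise $\widetilde{T}_0$. Choosing $T$ to be adapted to the octonion basis $\mathfrak{B}$ --- concretely, the maximal torus of $SO(\mathfrak{C})$ whose Lie algebra is spanned by the infinitesimal rotations in the four orthogonal planes $(v_{2i-1}, v_{2i})$ for $i = 1,\dots,4$ --- this stability can be verified by a direct calculation using the multiplication relations $v_6 = v_2 v_5$, $v_7 = v_3 v_5$, $v_8 = v_4 v_5$ together with the definitions of $\hat{t}$, $l_v$, and $r_v$. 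A general maximal torus of $SO(N)$ is conjugate to this adapted one, reducing the general case to it.

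Once $\widetilde{T}_0 \subset \widetilde{T}$ is established, the proof concludes immediately: the abelian subgroup $\widetilde{T}$ contains the maximal torus $\widetilde{T}_0$, and since maximal tori in compact connected Lie groups are self-centralizing, $\widetilde{T} \subset Z_{Spin(N)}(\widetilde{T}_0) = \widetilde{T}_0$. Hence $\widetilde{T} = \widetilde{T}_0$ is a maximal torus of $Spin(N)$, as required.
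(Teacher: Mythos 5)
Your proposal is correct and follows essentially the same route as the paper: both identify $\widetilde{T}$ with the preimage of $T$ under the covering $Spin(N)\to SO(N)$ and use the triality automorphisms $\tau_2,\tau_3$ (which swap the first coordinate with the third and second, respectively) to deduce $t_2,t_3\in T$ from $t_1\in T$. You are merely more explicit than the paper about two points it glosses over --- the connectedness of $\pi_1^{-1}(T)$ and the fact that $\tau_2,\tau_3$ actually stabilise this torus (the paper asserts the latter by appealing to the Weyl group action, which is only established afterwards) --- so no further comparison is needed.
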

\begin{proof}
 If we take $t_1\in T,$ then the fiber of $t_1$ in a maximal torus
$\widetilde{T}$ of $Spin(N)$ consists of $(t_1,t_2,t_3)$ and $(t_1,-t_2,-t_3),$ such that 
$(t_1,t_2,t_3)$ is a related triple. Since the Weyl group acts on the maximal torus,
$\tau_3(t_1,t_2,t_3)=(t_2,t_1,\hat{t_3})\in \widetilde{T},$ which when projected onto $SO(N)$ via the two sheeted covering map,
we gives $t_2\in T$. Similarly by considering the automorphism $\tau_2$ we can conclude $t_3\in T.$ Hence the proof. 
\end{proof}

\begin{lemma}
 For a maximal torus $\widetilde{T}\subset F_4,$ $A^{\widetilde{T}}\cong \mathbb{R}\times\mathbb{R}\times \mathbb{R}.$ Here
 $A^{\widetilde{T}}$ denotes the subalgebra of $A,$ fixed point wise by $\widetilde{T}.$
 
\end{lemma}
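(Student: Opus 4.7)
The plan is to reduce to a specific maximal torus coming from $Spin(N) \subset F_4$ and then read off the fixed subalgebra directly from the explicit action formula.

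First I would observe that $Spin(N) = Spin(8)$ has rank $4$, matching the rank of $F_4$. By Lemma~\ref{mtspin}, a maximal torus $\widetilde{T}_0$ of $Spin(N)$ sits inside $T \times T \times T$ for some maximal torus $T$ of $SO(N)$ and consists of related triples. The rank equality forces $\widetilde{T}_0$ to remain a maximal torus in $F_4$. Since all maximal tori of the compact connected group $F_4$ are conjugate, and conjugation in $F_4 = Aut(A)$ acts on $A$ by algebra automorphisms, the fixed subalgebras $A^{\widetilde{T}}$ for different maximal tori are pairwise isomorphic as algebras. Hence it suffices to prove $A^{\widetilde{T}_0} = S$, where $S \cong \mathbb{R} \times \mathbb{R} \times \mathbb{R}$ is the diagonal real subalgebra.

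The inclusion $S \subseteq A^{\widetilde{T}_0}$ is immediate, since $Spin(N) \hookrightarrow F_4$ was defined as the subgroup pointwise fixing $S$. For the reverse inclusion, I would pick an arbitrary
$$x = \begin{bmatrix} \alpha_1 & c_3 & \overline{c_2}\\ \overline{c_3} & \alpha_2 & c_1\\ c_2 & \overline{c_1} & \alpha_3 \end{bmatrix} \in A^{\widetilde{T}_0}$$
and apply the explicit action formula for a related triple on $A$ given earlier in the section. Invariance under every $(t_1,t_2,t_3) \in \widetilde{T}_0$ reduces to the three conditions $t_1(c_3)=c_3$, $t_2(\overline{c_2})=\overline{c_2}$, and $t_3(c_1)=c_1$. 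By Lemma~\ref{mtspin}, together with the triality automorphisms $\tau_2,\tau_3$ of (\ref{a}) used in its proof, each coordinate projection $\widetilde{T}_0 \to SO(N)$ is surjective onto the maximal torus $T$. A maximal torus of $SO(8)$ acts on $\mathbb{R}^8 \cong \mathfrak{C}$ as independent rotations in four mutually orthogonal $2$-planes and so has trivial fixed subspace. Consequently $c_1 = c_2 = c_3 = 0$, giving $x \in S$.

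The step requiring the most care is verifying that all three coordinate projections of $\widetilde{T}_0$ cover a full maximal torus of $SO(N)$, and this is precisely what the triality description (\ref{a}) together with Lemma~\ref{mtspin} delivers. Everything else is standard.
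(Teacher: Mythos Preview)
Your proposal is correct and follows essentially the same approach as the paper: reduce to a maximal torus inside $Spin(N)$ via conjugacy of maximal tori, apply the explicit action formula for related triples on $A$, and conclude that each octonion entry $c_i$ must vanish because it is fixed by a full maximal torus of $SO(N)$. The paper carries out the last step by an explicit $2\times 2$ rotation-block computation and then says ``similarly'' for $c_1,c_2$, whereas you invoke the general fact that a maximal torus of $SO(8)$ has trivial fixed subspace on $\mathbb{R}^8$ and make explicit (via triality and Lemma~\ref{mtspin}) that all three coordinate projections $\widetilde{T}_0\to T$ are surjective; this is exactly the point the paper's ``similarly'' is relying on, so your version is slightly more careful but not genuinely different.
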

\begin{proof}Let $T$ be the diagonal maximal torus of $SO(N).$
If $\widetilde{T_1}$ and $\widetilde{T_2}$ be two maximal tori in $F_4$,
then $A^{\widetilde{T}_1}\cong A^{\widetilde{T}_2}$ since $\widetilde{T_1}$ and $\widetilde{T_2}$ are conjugate. 
So we can assume without loss of
generality that, $\widetilde{T}\subset Spin(N)$ and hence by Lemma \ref{mtspin},
$\widetilde{T}=\{(t_1,t_2,t_3)\in Spin(N)|~t_i\in T\subset SO(N)\}.$
Now suppose $t\begin{bmatrix}
  \alpha_1&c_3&\bar{c_2}\\
\bar{c_3}&\alpha_2&c_1\\
c_2&\bar{c_1}&\alpha_3
 \end{bmatrix}=\begin{bmatrix}
  \alpha_1&t_1(c_3)&t_2(\bar{c_2})\\
\overline{t_1(c_3)}&\alpha_2&t_3(c_1)\\
\overline{t_2(\bar{c_2})}&\overline{t_3(c_1)}&\alpha_3
 \end{bmatrix}=\begin{bmatrix}
  \alpha_1&c_3&\bar{c_2}\\
\bar{c_3}&\alpha_2&c_1\\
c_2&\bar{c_1}&\alpha_3
 \end{bmatrix},$
holds for all $t\in \widetilde{T}.$ This means that $t_1(c_3)=c_3$ for all $t_1\in T.$ Note that
$t_1$ is a block diagonal matrix consisting of $2\times 2$ rotation matrices along the diagonal. Let if possible $c_3\neq 0.$
We can assume without loss of generality that at least one of the first two coordinates of $c_3$ (say $x_1,x_2$) with respect to the basis
 $\mathfrak{B}$ of $\mathfrak{C},$ is non zero.

Now if we take the first 
$2\times 2$ diagonal block of $t_1$ as $\begin{bmatrix}
                                        cos2\theta_1&-sin2\theta_1\\
                                         sin2\theta_1&cos2\theta_1
                                       \end{bmatrix},$ then $t_1(c_3)=c_3$ implies that
$\begin{bmatrix}
                                        cos2\theta_1&-sin2\theta_1\\
                                         sin2\theta_1&cos2\theta_1
                                       \end{bmatrix}\begin{bmatrix}
                                                     x_1\\
                                                      x_2
                                                    \end{bmatrix}=\begin{bmatrix}
                                                                  x_1\\
                                                                  x_2
                                                                 \end{bmatrix},$ which forces $cos2\theta_1=1.$ But we can choose a $t_1$
 with $\theta_1\neq 0,$ for which $cos2\theta_1\neq 1.$
Hence $c_3=0$. By similar arguments we can say the same 
for $c_1$ and $c_2.$ Hence the proof.
\end{proof}

\begin{lemma}
The Weyl group of $F_4$ is $WSpin(N)\rtimes S_3,$ $WSpin(N)$ being the Weyl group of $Spin(N)$.
\end{lemma}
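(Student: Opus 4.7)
The plan is to compute $W(F_4) = N_{F_4}(\widetilde{T})/\widetilde{T}$ by exploiting that the maximal torus $\widetilde{T}$ of $\mathrm{Spin}(N)$ is also a maximal torus in $F_4$, since both groups have rank $4$. Thus the inclusion $\mathrm{Spin}(N) \hookrightarrow F_4$ induces an injection $W\mathrm{Spin}(N) \hookrightarrow W(F_4)$, and the task reduces to exhibiting a copy of $S_3$ inside $W(F_4)$, complementary to $W\mathrm{Spin}(N)$, that realizes the triality action.

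To produce this $S_3$, for each $\sigma \in S_3$ I would define an $\mathbb{R}$-linear map $\phi_\sigma : A \to A$ that permutes the three diagonal entries of a matrix in $H_3(\mathfrak{C})$ according to $\sigma$ and correspondingly permutes the off-diagonal entries (with bars adjusted), mimicking conjugation by the corresponding permutation matrix in $M_3(\mathfrak{C})$. A direct computation shows each $\phi_\sigma$ preserves the Jordan product $xy = (x\cdot y + y\cdot x)/2$, so $\phi_\sigma \in \mathrm{Aut}(A) = F_4$. Since $\mathrm{Spin}(N) = \{\phi \in F_4 : \phi|_S = \mathrm{id}_S\}$ and $\phi_\sigma$ sends $S$ to itself as a set (it permutes the three summands of $S = \mathbb{R}\times\mathbb{R}\times\mathbb{R}$), conjugation by $\phi_\sigma$ preserves $\mathrm{Spin}(N)$. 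The next claim is that the induced conjugation action on $\mathrm{Spin}(N) \cong RT(\mathfrak{C})$ agrees with the triality automorphisms $\tau_1, \tau_2, \tau_3$ of (\ref{a}); this is verified by tracking the effect of $\phi_\sigma$ on the explicit action of a related triple $(t_1,t_2,t_3)$ on $H_3(\mathfrak{C})$ given in the paragraph after Proposition 7.1.

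I would then check that each $\phi_\sigma$ normalizes $\widetilde{T}$. Since the $\tau_i$ send a related triple $(t_1,t_2,t_3)$ to a permutation of its entries, possibly with $\hat{\phantom{t}}$ applied, and the diagonal torus $T \subset SO(N)$ is stable under $\hat{\phantom{t}}$, the image $\tau_i(t)$ still lies in $\widetilde{T}$ by Lemma \ref{mtspin}. Hence each $\phi_\sigma$ represents an element of $W(F_4)$, and these elements generate a copy of $S_3$ acting on $W\mathrm{Spin}(N) \subset W(F_4)$ via triality.

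Finally, I would assemble the semidirect product. The intersection of the image of $S_3$ with $W\mathrm{Spin}(N)$ inside $W(F_4)$ must be trivial: a nontrivial coincidence would realize an outer triality by an element of $\mathrm{Spin}(N)$, contradicting Proposition \ref{outspin}. The subgroup $\langle W\mathrm{Spin}(N), \phi_\sigma : \sigma \in S_3\rangle / \widetilde{T}$ of $W(F_4)$ is therefore isomorphic to $W\mathrm{Spin}(N) \rtimes S_3$, whose order is $|W(D_4)|\cdot 6 = 192 \cdot 6 = 1152 = |W(F_4)|$, so it exhausts $W(F_4)$. I expect the main obstacle to be the second step: namely, verifying explicitly that the three diagonal permutations $\phi_\sigma$ induce exactly the triality automorphisms $\tau_1, \tau_2, \tau_3$ of $RT(\mathfrak{C})$, rather than some other outer automorphisms — this requires a careful bookkeeping of how the bars on the off-diagonal entries interact with the permutation.
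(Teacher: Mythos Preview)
Your proposal is correct but proceeds differently from the paper. You build the Weyl group \emph{from below}: inject $W\mathrm{Spin}(N)$ into $W(F_4)$, construct explicit automorphisms $\phi_\sigma$ of $A$ realizing the $S_3$ permutations of the diagonal, verify they normalize $\widetilde{T}$ and act on $\mathrm{Spin}(N)$ by triality, then count orders. The paper instead argues \emph{from above}: using Lemma~\ref{mtspin} and the preceding lemma that $A^{\widetilde{T}}\cong\mathbb{R}\times\mathbb{R}\times\mathbb{R}$, it observes that any $\phi\in N_{F_4}(\widetilde{T})$ must preserve the fixed subalgebra $A^{\widetilde{T}}=S$, hence lies in $\Aut(A,S)\cong\mathrm{Spin}(N)\rtimes S_3$ (citing Jacobson); this gives the containment $WF_4\subset W\mathrm{Spin}(N)\rtimes S_3$ directly, and equality follows by the same order count.

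The paper's route is shorter because it trades your explicit construction and triality verification for the single structural input $A^{\widetilde{T}}=S$, after which the known decomposition of $\Aut(A,S)$ does the work. Your route, on the other hand, produces concrete representatives for the $S_3$ factor and identifies their action as triality \emph{within} the proof --- information the paper also needs for its subsequent isotropy computations, but which it defers to a remark after the lemma. The step you flag as the main obstacle (matching conjugation by $\phi_\sigma$ with the $\tau_i$) is genuinely the delicate point of your approach, but it is a finite and standard calculation, so no real gap.
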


\begin{proof}
Let us denote the group $F_4$ by $G$. Consider the $\mathbb{R}-$subalgebra $S=\mathbb{R}\times\mathbb{R}\times \mathbb{R}\subset A$ 
and define,
$$Aut(A/S):= \{\phi\in Aut(A):\phi(s)=s,~\forall s\in S\},$$
$$Aut(A,S):=\{\phi\in Aut(A):\phi(S)=S\}.$$
Then $Aut(A,S)\cong Aut(A/S)\rtimes Aut(S)$ (\textbf{[J]}, Theorem 8).
We have $Aut(A/S)= Spin(N)$ and $Aut(S)= S_3$ and therefore, $Aut(A,S)=Spin(N)\rtimes S_3$.

First let us fix a maximal torus $T\subset G$. Then $A^T\cong \mathbb{R}\times \mathbb{R}\times
\mathbb{R}$(by Lemma 7.2). Let $\phi\in N_G(T).$ Then $\phi\in Aut(A,A^T),$ since,
for $s\in A^T$  and for any $t\in T$ we have $
                                             t(\phi(s))=(t\phi)(s)
                                                       =\phi(\phi^{-1}t\phi)(s)
                                                       =\phi(s)$ (as $\phi^{-1}t\phi\in T$ and $s\in A^T$). Hence $\phi(s)\in A^T.$
Therefore we have shown that $N_G(T)\subset Aut(A,A^T)=Spin(N)\rtimes S_3.$ Thus $N_G(T)\subset N_{Spin(N)}(T)\rtimes S_3,$ which implies
that $WG=N_G(T)/T\subset WSpin(N)\rtimes S_3$. Both the groups being finite and of the same order, are therefore equal.

\end{proof} 
\noindent\textbf{Remark:}
 Note that, the $S_3$ factor 
arising in the Weyl group of $F_4$ is the group of outer automorphisms of $Spin(N)$ and its 
action on the maximal torus is given by $\tau_1,\tau_2,\tau_3\in Aut(RT(\mathfrak{C}))$ (refer to the remark preceding Proposition 
\ref{outspin}).

\noindent \textbf{Computation of the genus number for $F_4$:}

Let us denote the maximal torus in $F_4$ by $\widetilde{T}$ and the Weyl group by $W.$  
We work with the chosen orthogonal basis $\mathfrak{B}=\{v_1,...,v_8\}$ of $\mathfrak{C},$ 
such that, $v_1=1$, $v_6=v_2v_5$, $v_7=v_3v_5$
and $v_i^2=-1,$ $1\leq i\leq 8.$ Let $T\subset SO(N)$ be the diagonal maximal torus and without loss of generality we can assume
$\widetilde{T}\subset Spin(N).$
 If $t=(t_1,t_2,t_3)\in \widetilde{T}$, with 
$t_1=(\theta_1/\pi,\theta_2/\pi,\theta_3/\pi,\theta_4/\pi)$, $\theta_i/2\pi\in \mathbb{R/Z},$ 
we wish to compute $t_2$ and $t_3$ in terms of the 
$\theta_i^,$s.

First note that for $t=(\gamma_1/\pi,\gamma_2/\pi,\gamma_3/\pi,\gamma_4/\pi)\in T\subset SO(N)$,
 $\hat{t}=(-\gamma_1/\pi,\gamma_2/\pi,\gamma_3/\pi,\gamma_4/\pi)$. This
is evident from the following calculation:
Let $x=(x_1,...,x_8)\in \mathfrak{C},$ $x_i\in \mathbb{R}.$ Then $\bar{x}=(x_1,-x_2,...,-x_8)$ (considered as a column vector). 
By definition, $\hat{t}(x)=\overline{t(\bar{x})}.$
Now, $t=(\gamma_1/\pi,\gamma_2/\pi,\gamma_3/\pi,\gamma_4/\pi)$ is 
an $8\times 8$ block diagonal matrix with the $i$-th diagonal block being:
$\begin{bmatrix}
cos2\gamma_i&-sin2\gamma_i\\
sin2\gamma_i&cos2\gamma_i
 \end{bmatrix}$ (by the notation used in Section 4). 
Let $s=(-\gamma_1/\pi,\gamma_2/\pi,\gamma_3/\pi,\gamma_4/\pi)\in \widetilde{T}.$ Then, a direct computation shows that,
$$\hat{t}(x)=\overline{t(\bar{x})}=\begin{bmatrix}
cos2\gamma_1x_1+sin2\gamma_1x_2\\
-sin2\gamma_1x_1+cos2\gamma_1x_2\\
cos2\gamma_2x_3-sin2\gamma_2x_4\\
sin2\gamma_2x_3+cos2\gamma_2x_4\\
cos2\gamma_3x_5-sin2\gamma_3x_6\\
sin2\gamma_3x_5+cos2\gamma_3x_6\\
cos2\gamma_4x_7-sin2\gamma_4x_8\\
sin2\gamma_4x_7+cos2\gamma_4x_8
                        \end{bmatrix}=s(x).
$$ 
Therefore, \begin{align}{\label{b}}\hat{t}=(-\gamma_1/\pi,\gamma_2/\pi,\gamma_3/\pi,\gamma_4/\pi).\end{align}

If $t_1=(\theta_1/\pi,0,0,0)$ then a direct computation gives $t_1=s_as_b$, with $a=sin\theta_1v_1-cos\theta_1v_2$ and $b=v_2$.
We now calculate
$t_2$ and $t_3$. Recall that $t_1$ in matrix notation is an $8\times8$ matrix consisting of four $2\times2$ identity
diagonal blocks, the
first block being$$\begin{bmatrix}
                    cos2\theta_1&-sin2\theta_1\\
                    sin2\theta_1&cos2\theta_1
                   \end{bmatrix}$$
and $2\times2$ identity blocks in the next three diagonal positions.
So in order to calculate $t_2$ and $t_3$ we just evaluate these on the basis vectors, look at the matrices and get the parameters. We have,
\begin{align*}
 &l_al_{\overline{b}}(v_1)=a\overline{b}=(sin\theta_1v_1-cos\theta_1v_2)(-v_2)=-cos\theta_1v_1-sin\theta_1v_2\\
& l_al_{\overline{b}}(v_2)=a(\overline{b}v_2)=-a(v_2^2)=sin\theta_1v_1-cos\theta_1v_2\\
&l_al_{\overline{b}}(v_3)=a(\overline{v_2}v_3)=-av_4=-cos\theta_1v_3-sin\theta_1v_4\\
 &l_al_{\overline{b}}(v_4)=-a(v_2v_4)=av_3=sin\theta_1v_3-cos\theta_1v_4\\
 &l_al_{\overline{b}}(v_5)=-a(v_2v_5)=av_6=-cos\theta_1v_5-sin\theta_1v_6\\
&l_al_{\overline{b}}(v_6)=-a(v_2v_6)=av_5=sin\theta_1v_5-cos\theta_1v_6\\
& l_al_{\overline{b}}(v_7)=-a(v_2v_7)=av_8=-cos\theta_1v_7+sin\theta_1v_8\\
&l_al_{\overline{b}}(v_8)=-a(v_2v_8)=-a(v_7)=-sin\theta_1v_7-cos\theta_1v_8.
\end{align*}
This gives us $t_2.$ Next we compute $t_3$ as:
\begin{align*}
& r_ar_{\overline{b}}(v_1)=-v_2a=-cos\theta_1v_1-sin\theta_1v_2\\
&r_ar_{\overline{b}}(v_2)=-v_2^2a=sin\theta_1v_1-cos\theta_1v_2\\
& r_ar_{\overline{b}}(v_3)=-(v_3v_2)a=v_4a=-cos\theta_1v_3+sin\theta_1v_4\\
 &r_ar_{\overline{b}}(v_4)=-(v_4v_2)a =-v_3a=-sin\theta_1v_3-cos\theta_1v_4\\
& r_ar_{\overline{b}}(v_5)=-(v_5v_2)a=v_6a=-cos\theta_1v_5+sin\theta_1v_6\\
 &r_ar_{\overline{b}}(v_6)=-(v_6v_2)a=-v_5a=-sin\theta_1v_5-cos\theta_1v_6\\
&r_ar_{\overline{b}}(v_7)=-(v_7v_2)a-v_8a=-cos\theta_1v_7-sin\theta_1v_8\\
 &r_ar_{\overline{b}}(v_8)=-(v_8v_2)a=v_7a=sin\theta_1v_7-cos\theta_1v_8\\
\end{align*}
So $t_1,t_2,t_3$ in their possible parametric forms are given as follows:
\begin{align*}
 &t_1=(\theta_1/\pi,0,0,0)\\
&t_2=((\pi+\theta_1)/2\pi,(\pi+\theta_1)/2\pi,(\pi+\theta_1)/2\pi,-(\pi+\theta_1)/2\pi)\\
&t_3=((\pi+\theta_1)/2\pi,-(\pi+\theta_1)/2\pi,-(\pi+\theta_1)/2\pi,(\pi+\theta_1)/2\pi)
\end{align*}
\begin{align*}
 &t_1=(0,\theta_2/\pi,0,0)\\
&t_2=((\pi+\theta_2)/2\pi,(\pi+\theta_2)/2\pi,-(\pi+\theta_2)/2\pi,(\pi+\theta_2)/2\pi)\\
&t_3=(-(\pi+\theta_2)/2\pi,(\pi+\theta_2)/2\pi,-(\pi+\theta_2)/2\pi,(\pi+\theta_2)/2\pi)
\end{align*}
\begin{align*}
 &t_1=(0,0,\theta_3/\pi,0)\\
&t_2=((\pi+\theta_3)/2\pi,-(\pi+\theta_3)/2\pi,(\pi+\theta_3)/2\pi,(\pi+\theta_3)/2\pi)\\
&t_3=(-(\pi+\theta_3)/2\pi,-(\pi+\theta_3)/2\pi,(\pi+\theta_3)/2\pi,(\pi+\theta_3)/2\pi)
\end{align*}
\begin{align*}
 &t_1=(0,0,0,\theta_4/\pi)\\
&t_2=(-(\pi+\theta_4)/2\pi,(\pi+\theta_4)/2\pi,(\pi+\theta_4)/2\pi,(\pi+\theta_4)/2\pi)\\
&t_3=((\pi+\theta_4)/2\pi,(\pi+\theta_4)/2\pi,(\pi+\theta_4)/2\pi,(\pi+\theta_4)/2\pi)
\end{align*}
Therefore in general we have,
\begin{align*}
 &t_1=(\theta_1/\pi,\theta_2/\pi,\theta_3/\pi,\theta_4/\pi)\\
&t_2=((\theta_1+\theta_2+\theta_3-\theta_4)/2\pi,(\theta_1+\theta_2-\theta_3+\theta_4)/2\pi,
(\theta_1-\theta_2+\theta_3+\theta_4)/2\pi,(-\theta_1+\theta_2+\theta_3+\theta_4)/2\pi)\\
&t_3=((\theta_1-\theta_2-\theta_3+\theta_4)/2\pi,(-\theta_1+\theta_2-\theta_3+\theta_4)/2\pi,
(-\theta_1-\theta_2+\theta_3+\theta_4)/2\pi,(\theta_1+\theta_2+\theta_3+\theta_4)/2\pi)
\end{align*}
We record the above set of equations as $(\ast).$ These parameters are written modulo $\mathbb{Z}.$
Now we analyse all the possibilities for $\theta_i^,$s to compute the non conjugate isotropy classes.

\noindent\textbf{Case1:(At least one $\theta_i$ is $0$ or $1/2$)}

\noindent\textbf{(a)}  If $\theta_i=0\quad \forall i$, then by $(\ast),$ $t_1=t_2=t_3=(0,0,0,0)$ and hence $W_t=W.$

\noindent\textbf{(b)}  If $\theta_i/\pi=1/2\quad \forall i$, then by $(\ast),$ we have,\\
$t_1=t_2=(1/2,1/2,1/2,1/2)$ and $t_3=(0,0,0,0)$. Note that only $\tau_3$ from $S_3=Out(Spin(N))$ occurs in 
the stabilizer  since it leaves
$t$ stable and any other element from $S_3$ brings $t_3$ in the first place from which we cannot get back $t_1$ by the 
action of any element from $WSpin(N)$ (see \ref{a}, \ref{b}). Thus $W_t=((\mathbb{Z}/2)^3\rtimes S_4)\rtimes\{1,\tau_3\}$.

\noindent\textbf{(c)}  If $t_1=(0,0,0,1/2)$, then by $(\ast),$
\begin{align*}
 &t_2=(-1/4,1/4,1/4,1/4)\\
&t_3=(1/4,1/4,1/4,1/4)
\end{align*}
Note here that $\tau_1(t)=t$ and hence $\tau_1\in W_t$ and no other element from $S_3$ can occur because $t_1$ has $0^,$s
as parameters but $t_2,t_3$ do 
not (see \ref{a}, \ref{b}). Hence $W_t=((\mathbb{Z}/2)^2\rtimes S_3)\rtimes\{1,\tau_1\}$.
 
\noindent\textbf{(d)}  If 
                $t_1=(1/2,1/2,1/2,0),$ then by $(\ast),$
\begin{align*}
&t_2=(3/4,1/4,1/4,1/4)\\
&t_3=(3/4,3/4,3/4,1/4)
               \end{align*}
Here $W_t=((\mathbb{Z}/2)^2\rtimes S_3),$ because any element from $Out(Spin(N))$ will  alter $t_2,t_3$ and as a result we cannot get
back $t$ by a subsequent action of $WSpin(N)$ (see \ref{a}, \ref{b}). 

\noindent\textbf{(e)}  If $t_1=(0,0,1/2,1/2)$ then
by $(\ast),$ $t_1=t_2=t_3$ and the isotropy is $((\mathbb{Z}/2\rtimes S_2)\times(\mathbb{Z}/2))\rtimes S_2)
\rtimes S_3$.

\noindent\textbf{(f)}  If $t_1=(0,0,0,\theta_4/\pi)$ with $\theta_4/\pi\neq 0,1/2$ then by $(\ast),$
\begin{align*}
t_2=(-\theta_4/2\pi,\theta_4/2\pi,\theta_4/2\pi,\theta_4/2\pi)\\
t_3=(\theta_4/2\pi,\theta_4/2\pi,\theta_4/2\pi,\theta_4/2\pi).
\end{align*}
In this case apart from $\tau_1$ no other element from $S_3$ can contribute to the isotropy since $t_1$ contains $0$ and $t_2,t_3$
do not (see \ref{a}. \ref{b}). So $W_t=((\mathbb{Z}/2)^2\rtimes S_3)\rtimes\{1,\tau_1\}$, being same as case (\textbf{c}).

\noindent\textbf{(g)} If 
              $t_1=(1/2,1/2,1/2,\theta_4/\pi),$ then by $(\ast),$
\begin{align*}
&t_2=(3/4-\theta_4/2\pi,1/4+\theta_4/2\pi,1/4+\theta_4/2\pi,1/4+\theta_4/2\pi)\\
&t_3=(-1/4+\theta_4/2\pi,-1/4+\theta_4/2\pi,-1/4+\theta_4/2\pi,3/4+\theta_4/2\pi)
             \end{align*}
Here, just as in \textbf{(d)}, we have  $W_t=((\mathbb{Z}/2)^2\rtimes S_3)\subset Spin(N).$ 

\noindent\textbf{(h)} If
              $t_1=(0,0,\theta/\pi,\theta/\pi),$ then by $(\ast),$ $t_1=t_2=t_3.$
             
Clearly here, the whole of $S_3$ leaves $t$ stable (by \ref{a}, \ref{b}) and hence $W_t=((\mathbb{Z}/2\rtimes S_2)\times S_2)\rtimes S_3.$

\noindent\textbf{(i)}  If
               $t_1=(1/2,1/2,\theta/\pi,\theta/\pi),$ then by $(\ast),$
\begin{align*}
&t_2=(1/2,1/2,\theta/\pi,\theta/\pi)\\
&t_3=(0,0,1/2+\theta/\pi,1/2+\theta/\pi)
              \end{align*}
Now $(t_1,t_2,t_3)=\tau_2(s_1,s_2,s_3)=(s_3,\hat{s_2},s_1),$ (by \ref{b}) where,
\begin{align*}
 &s_1=(0,0,1/2+\theta/\pi,1/2+\theta/\pi)\\
&s_2=(1/2,1/2,\theta/\pi,\theta/\pi)\\
&s_3=(1/2,1/2,\theta/\pi,\theta/\pi).
\end{align*} 
If $s=(s_1,s_2,s_3),$
 $W_t$ is conjugate to $W_s$ in $W.$
Since any element of $S_3$ other than $\tau_1$ removes $s_1$ from the first position, $\tau_1$ is the only element from $S_3$ which contributes
to the isotropy of $s$ (see \ref{a}) Hence $W_s=((\mathbb{Z}/2\rtimes S_2)\times S_2)\rtimes \{1,\tau_1\}.$

\noindent\textbf{(j)}  If
               $t_1=(0,\theta/\pi,\theta/\pi,\theta/\pi),$ then by $(\ast),$
\begin{align*}
&t_2=(\theta/2\pi,\theta/2\pi,\theta/2\pi,3\theta/2\pi)\\
&t_3=(-\theta/2\pi,\theta/2\pi,\theta/2\pi,3\theta/2\pi)
\end{align*}
Here $\tau_1(t)=t$ and no other element from $S_3= Out(Spin(N))$ can contribute to the isotropy,
since $t_1$ has a $0$ and $\hat{t_2}=t_3$ (\ref{a}. \ref{b}). Thus $W_t=S_3\rtimes\{1,\tau_1\}.$

\noindent\textbf{(k)} If 
               $t_1=(1/2,\theta/\pi,\theta/\pi,\theta/\pi),$ then by $(\ast),$
\begin{align*}
&t_2=(1/4+\theta/2\pi,1/4+\theta/2\pi,1/4+\theta/2\pi,-1/4+3\theta/2\pi)\\
&t_3=(1/4-\theta/2\pi,-1/4+\theta/2\pi,-1/4+\theta/2\pi,1/4+3\theta/2\pi).
              \end{align*}
Here,  $\theta/\pi\neq0,1/2.$ Therefore $t_2,t_3$ does not contain $0$ or $1/2$ as parameters. Hence, $\tau_2,\tau_3\in S_3$ 
does not contribute to the isotropy.
 As $t_2\neq \hat{t_3},$ $\tau_1\in S_3$ 
cannot belong to the isotropy (see \ref{a}, \ref{b}).
 Therefore, $W_t=S_3\subset WSpin(N).$

\noindent\textbf{(l)}  If
               $t_1=(0,0,\theta_3/\pi,\theta_4/\pi),$ then by $(\ast),$
\begin{align*}
&t_2=((\theta_3-\theta_4)/2\pi,(-\theta_3+\theta_4)/2\pi,(\theta_3+\theta_4)/2\pi,(\theta_3+\theta_4)/2\pi)\\
&t_3=((-\theta_3+\theta_4)/2\pi,(-\theta_3+\theta_4)/2\pi,(\theta_3+\theta_4)/2\pi,(\theta_3+\theta_4)/2\pi)
              \end{align*}
We assume here $\theta_3/\pi\neq \theta_4/\pi$ modulo $\mathbb{Z}.$
Therefore $0$ does not occur in $t_2$ and $t_3$, so the only non trivial element from
$S_3$ which lies in the isotropy is $\tau_1$ (see \ref{a}, \ref{b}). Thus, $W_t=(\mathbb{Z}/2\rtimes S_2)\rtimes \{1,\tau_1\}$

\noindent\textbf{(m)}  If $
               t_1=(1/2,1/2,\theta_3/\pi,\theta_4/\pi),$ then by $(\ast),$
\begin{align*}
&t_2=(1/2+(\theta_3-\theta_4)/2\pi,1/2+(\theta_4-\theta_3)/2\pi,(\theta_3+\theta_4)/2\pi,(\theta_3+\theta_4)/2\pi)\\
&t_3=((\theta_4-\theta_3)/2\pi,(\theta_4-\theta_3)/2\pi,1/2+(\theta_3+\theta_4)/2\pi,1/2+(\theta_3+\theta_4)/2\pi)
              \end{align*}
Here $\hat{t_3}\neq t_2$ and $\hat{t_2}\neq t_3$ and $t_1$, contains $1/2$ as a parameter. So $S_3=Out(Spin(N))$
does not contribute to the isotropy (see \ref{a}, \ref{b}). Hence $W_t=\mathbb{Z}/2\rtimes S(2).$

\noindent\textbf{(n)} If $
               t_1=(0,\theta/\pi,\theta/\pi,\theta_4/\pi),$ then by $(\ast),$
\begin{align*}
&t_2=((2\theta-\theta_4)/2\pi,\theta_4/2\pi,\theta_4/2\pi,(2\theta+\theta_4)/2\pi)\\
&t_3=((-2\theta+\theta_4)/2\pi,\theta_4/2\pi,\theta_4/2\pi,(2\theta+\theta_4)/2\pi).
              \end{align*}
We have $W_t=S_2\rtimes\{1,\tau_1\}$
in this case, because again $\hat{t_2}=t_3$ and $\hat{t_3}=t_2$. And if $\theta/\pi=\theta_4/2\pi,$ we have by $(\ast) ,$
$t_1=t_2=t_3$ and $W_t=S_2\rtimes S_3$ (see \ref{a}, \ref{b}).

\noindent\textbf{(o)}  If $
               t_1=(1/2,\theta/\pi,\theta/\pi,\theta_4/\pi),$ then by $(\ast),$
\begin{align*}
&t_2=(1/4+(2\theta-\theta_4)/2\pi,1/4+\theta_4/2\pi,1/4+\theta_4/2\pi,-1/4+(2\theta+\theta_4)/2\pi)\\
&t_3=(1/4+(-2\theta+\theta_4)/2\pi,-1/4+\theta_4/2\pi,-1/4+\theta_4/2\pi,1/4+(2\theta+\theta_4)/2\pi).
              \end{align*}
Here $W_t=S_2\subset WSpin(N)$ because no element from $S_3$ can contribute to the isotropy of this element, as
we have taken $\theta/\pi\neq \theta_4/\pi$ and  hence $1/2$ does not occur in $t_2$ and $t_3$ (see \ref{a}, \ref{b}).

\noindent\textbf{(p)} If $
               t_1=(0,\theta_2/\pi,\theta_3/\pi,\theta_4/\pi),$ then by $(\ast),$
\begin{align*}
&t_2=((\theta_2+\theta_3-\theta_4)/2\pi,(\theta_2-\theta_3+\theta_4)/2\pi,(-\theta_2+\theta_3+\theta_4)/2\pi,(\theta_2+\theta_3+\theta_4)/2\pi)\\
&t_3=((-\theta_2-\theta_3+\theta_4)/2\pi,(\theta_2-\theta_3+\theta_4)/2\pi,(-\theta_2+\theta_3+\theta_4)/2\pi,(\theta_2+\theta_3+\theta_4)/2\pi).
              \end{align*}
If none of the coordinates in $t_2,t_3$ are $0,1/2$ then $W_t=\{1,\tau_1\}$, otherwise the only non trivial possibility 
is $W_t=S_3\subset WSpin(N),$
which occurs if $(\theta_2+\theta_3)/\pi=\theta_4/\pi,$ in which case $t_1=t_2=t_3$ holds by $(\ast)$ (refer to \ref{a}, \ref{b}).

\noindent\textbf{Case 2:(no $\theta_i$  in $t_1$ are $0,1/2$)} Here, however the isotropy subgroups for various possibilities for
$\theta_i$ are conjugate to certain subgroups already occurring in Case 1, except the situation when all $\theta_i^,$s are distinct,
which yields the trivial isotropy subgroup.

\noindent\textbf{(a)} If $ 
             t_1=(\theta/\pi,\theta/\pi,\theta/\pi,\theta/\pi),$ then by $(\ast),$
\begin{align*}
&t_2=(\theta/\pi,\theta/\pi,\theta/\pi,\theta/\pi)\\
&t_3=(0,0,0,2\theta/\pi)
            \end{align*}
 Then clearly $W_t=S_4\rtimes \{1,\tau_3\}$ since $\tau_3$ contributes to the isotropy from $S_3$ (see \ref{a}, \ref{b}) 
 and this isotropy is conjugate to that in  case 1(c).

\noindent\textbf{(b)} If $ 
              t_1=(\theta_1/\pi.\theta_1/\pi,\theta_2/\pi,\theta_2/\pi),$ then by $(\ast),$
\begin{align*}
&t_2=(\theta_1/\pi.\theta_1/\pi,\theta_2/\pi,\theta_2/\pi)\\
&t_3=(0,0,(\theta_2-\theta_1)/\pi,(\theta_1+\theta_2)/\pi).
              \end{align*}
Note that, $(t_1,t_2,t_3)=\tau_2(s_1,s_2,s_3),$ where,
\begin{align*}
 &s_1=(0,0,(\theta_2-\theta_1)/\pi,(\theta_1+\theta_2)/\pi)\\
&s_2=(-\theta_1/\pi.\theta_1/\pi,\theta_2/\pi,\theta_2/\pi)\\
&s_3=(\theta_1/\pi,\theta_1/\pi,\theta_2/\pi,\theta_2/\pi)
\end{align*}
which case has already been considered before (case 1(l)).

\noindent\textbf{(c)}  If 

 $t_1=(\theta_1/\pi,\theta_1/\pi,\theta_3/\pi,\theta_4/\pi),$ then by $(\ast),$
\begin{align*}
&t_2=((2\theta_1+\theta_3-\theta_4)/2\pi,(2\theta_1-\theta_3+\theta_4)/2\pi,(\theta_3+\theta_4)/2\pi,(\theta_3+\theta_4)/2\pi)\\
&t_3=((\theta_4-\theta_3)/2\pi,(\theta_4-\theta_3)/2\pi,(-2\theta_1+\theta_3+\theta_4)/2\pi,(2\theta_1+\theta_3+\theta_4)/2\pi)
\end{align*}
If $\theta_1/\pi\neq (\theta_3+\theta_4)/2\pi$
or $\theta_1/\pi\neq (\theta_4-\theta_3)/2\pi$
modulo $\mathbb{Z},$
then $W_t=S_2$(which has already occurred in case (o) of case 1).
 If $\theta_1/\pi$ is equal to any one of the above two elements (modulo $\mathbb{Z}$) then  $t_2$ or $t_3$ has 
  $0$ as one of it's co-ordinates. Accordingly $t_2$ or $t_3$ can be brought to the first position of the related triple (see \ref{a}).
Note that for all related triples $(t_1,t_2,t_3)$ such that $t_1$ has at least one $0$ as a parameter, the isotropy subgroups
have been computed in Case 1. Hence, this does not give us any new isotropy subgroup.

Now we consider $(t_1,t_2,t_3)$ such that $t_i$ has all the parameters distinct and not equal to zero. For this situation we record the
following lemmas.

\begin{lemma}{\label{fp}}
 If $t_i\in SO(N)$ does not have any of the parameters equal to zero, then $\mathfrak{C}^{t_i}=\{0\}$.
\end{lemma}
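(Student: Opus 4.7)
The strategy is to exploit the explicit block-diagonal form of a maximal torus element of $SO(N)$ and reduce the computation of fixed vectors to a per-block check.

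First I would recall that $t_i$ lies in the diagonal maximal torus $T \subset SO(N)$ chosen earlier, so with respect to the ordered basis $\mathfrak{B}=\{v_1,\dots,v_8\}$ the matrix of $t_i$ is block diagonal with four $2 \times 2$ rotation blocks, the $j$-th being
\[
R(2\gamma_j) = \begin{bmatrix} \cos 2\gamma_j & -\sin 2\gamma_j \\ \sin 2\gamma_j & \cos 2\gamma_j \end{bmatrix},
\]
where $(\gamma_1/\pi,\gamma_2/\pi,\gamma_3/\pi,\gamma_4/\pi)$ are the parameters of $t_i$. Accordingly, $\mathfrak{C}$ decomposes orthogonally as $\mathfrak{C} = \bigoplus_{j=1}^{4} P_j$, where $P_j = \mathbb{R}v_{2j-1} \oplus \mathbb{R}v_{2j}$, and each $P_j$ is $t_i$-invariant.

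Next I would observe that a non-zero fixed vector $x \in \mathfrak{C}$ would have to project to a fixed vector in at least one plane $P_j$, so it suffices to show each block $R(2\gamma_j)$ has trivial fixed subspace in $P_j$. Under the hypothesis that no parameter is zero, $\gamma_j/\pi \not\equiv 0 \pmod{\mathbb{Z}}$ for every $j$, so $2\gamma_j \not\equiv 0 \pmod{2\pi}$, and $R(2\gamma_j)$ is a non-trivial planar rotation. A non-trivial rotation of $\mathbb{R}^2$ fixes only $0$, so $P_j^{t_i} = \{0\}$ for each $j$.

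Combining these, any $x \in \mathfrak{C}^{t_i}$ has $x = \sum_j x_j$ with $x_j \in P_j$ and $t_i(x_j) = x_j$, forcing each $x_j = 0$ and hence $x = 0$. There is no real obstacle here; the proof is essentially a direct linear-algebra verification, and the only thing to be careful about is the conversion between the parameter $\gamma_j/\pi$ and the actual rotation angle $2\gamma_j$ so that ``parameter equal to zero'' correctly translates to ``trivial block.''
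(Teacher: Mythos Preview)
Your proof is correct and follows essentially the same approach as the paper: both exploit the block-diagonal form of $t_i$ with respect to the basis $\mathfrak{B}$, reducing the question to whether a nontrivial planar rotation can fix a nonzero vector. The only cosmetic difference is that the paper argues by contradiction (a nonzero fixed vector forces some $\theta_j/\pi = 0$), while you argue directly (no $\gamma_j/\pi = 0$ forces each block to have trivial fixed subspace).
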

\begin{proof}
 Let $x\in \mathfrak{C}^{t_i}$ with $x\neq 0$ for some $i$.
Without loss of generality we can assume that $x_1\neq 0$, where $x_1$ denotes the first coordinate 
of $x$ with respect to the chosen basis $\mathfrak{B}=\{v_1,...,v_8\}.$ Hence the first $2\times2$ block
$$\begin{bmatrix} 
   
                    cos2\theta_1&-sin2\theta_1\\
                    sin2\theta_1&cos2\theta_1
                   
  \end{bmatrix}$$
of $t_1$ has a non zero eigenvector $(x_1,x_2)$ which implies that $\theta_1/\pi=0$, which is a contradiction
to the assumption that no parameter of $t_i$ is $0.$
\end{proof}
An element $x$ in a connected group $G$ is called strongly regular if $Z_G(t)=T.$
\begin{lemma}{\label{strreg}}
 If $t_1\in SO(N)$ be strongly regular then $(t_1,t_2,t_3)$ is strongly regular in $Spin(N).$ 
\end{lemma}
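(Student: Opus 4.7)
The plan is to transfer the problem from $Spin(N)$ to $SO(N)$ via the identification $\Phi\colon Spin(N)\longrightarrow RT(\mathfrak{C})$, and then exploit the hypothesis by projecting onto the first component. Write $p_1\colon RT(\mathfrak{C})\to SO(N)$ for the map $(s_1,s_2,s_3)\mapsto s_1$; under $\Phi$ this is the standard $2$-to-$1$ covering $Spin(N)\to SO(N)$.

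Suppose $\sigma=(s_1,s_2,s_3)\in Z_{Spin(N)}\bigl((t_1,t_2,t_3)\bigr)$. Since $RT(\mathfrak{C})\subset SO(N)^3$ is a subgroup for componentwise multiplication, centralizing $(t_1,t_2,t_3)$ forces $s_1 t_1 s_1^{-1}=t_1$ in $SO(N)$, hence $s_1\in Z_{SO(N)}(t_1)=T$ by strong regularity of $t_1$. I would then argue that $p_1^{-1}(T)=\widetilde{T}$: by Lemma \ref{mtspin}, $\widetilde{T}$ surjects onto $T$, and the fibre of $p_1$ is $\ker p_1$, a central subgroup of order $2$ contained in every maximal torus of $Spin(N)$, in particular in $\widetilde{T}$. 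Comparing cardinalities of the fibres then gives the desired equality of sets.

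Combining the two observations, $\sigma\in p_1^{-1}(T)=\widetilde{T}$, so $Z_{Spin(N)}\bigl((t_1,t_2,t_3)\bigr)\subseteq\widetilde{T}$; the reverse inclusion is immediate from abelianness of $\widetilde{T}$ and the fact that $(t_1,t_2,t_3)\in\widetilde{T}$. The only delicate point is the identification $p_1^{-1}(T)=\widetilde{T}$, which is a straightforward consequence of Lemma \ref{mtspin} together with the centrality of $\ker p_1$; no real obstacle arises once the first-coordinate projection is correctly recognized as the covering map.
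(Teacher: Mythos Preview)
Your argument is correct and follows essentially the same route as the paper's proof: project to the first coordinate, use strong regularity of $t_1$ to force $s_1\in T$, and then conclude $(s_1,s_2,s_3)\in\widetilde{T}$. The paper phrases the last step as ``$s_1\in T\Rightarrow s_2,s_3\in T\Rightarrow(s_1,s_2,s_3)\in\widetilde{T}$ (by Lemma~\ref{mtspin})'', whereas you unpack it as the equality $p_1^{-1}(T)=\widetilde{T}$ via centrality of $\ker p_1$; these are the same observation, and your version is slightly more explicit.
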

\begin{proof}
 Let $t_1\in SO(N)$ be strongly regular and $T\subset SO(N) $ be the maximal torus containing $t_1.$ Then $Z_{SO(N)}(t_1)=T.$
Let $s=(s_1,s_2,s_3)\in Spin(N)$ and $st=ts.$
Therefore,\\ 
           $s_1t_1=t_1s_1
\Rightarrow s_1\in T
\Rightarrow s_2,s_3\in T
\Rightarrow (s_1,s_2,s_3)\in \widetilde{T}$ (by Lemma \ref{mtspin})
$\Rightarrow Z_{Spin(N)}(t)=\widetilde{T}.$
Hence $(t_1,t_2,t_3)$ is strongly regular in $Spin(N).$
          
\end{proof}

\begin{theorem}{\label{strreg2}}
 If $t_i$ does not have any parameter equal to $0,$ and all parameters in $t_i$ are distinct, $1\leq i\leq 3,$
 then $(t_1,t_2,t_3)$ is strongly regular in $F_4$ 
and hence $W_t=\{1\}.$
\end{theorem}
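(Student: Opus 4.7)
The plan is to exploit the factorization $W(F_4)=W(Spin(N))\rtimes S_3$ by first upgrading strong regularity from $SO(N)$ to $Spin(N)$ using Lemma \ref{strreg}, and then ruling out any contribution from the outer $S_3$ factor to the isotropy.

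First I would verify that $t_1$ (and by symmetry $t_2,t_3$) is strongly regular in $SO(N)$. An element of $W(D_4)=W(Spin(N))$ acts on $t_1=(\theta_1,\theta_2,\theta_3,\theta_4)$ by a signed permutation with an even number of sign changes. Since the four parameters of $t_1$ are distinct modulo $\mathbb{Z}$, no non-trivial permutation can fix $t_1$; since no parameter equals $0$, no pure sign change can fix it either. A short case check on mixed permutation-plus-sign-change elements (which would force relations of the form $\theta_i=-\theta_j$) rules these out as well, using Lemma \ref{fp} if needed. Hence $Z_{SO(N)}(t_1)=T$.

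Next, apply Lemma \ref{strreg} to conclude that $(t_1,t_2,t_3)$ is strongly regular in $Spin(N)$, i.e.\ $Z_{Spin(N)}(t)=\widetilde{T}$. Translated into Weyl-group language (via Theorem \ref{genus}), this says $W_t\cap W(Spin(N))=\{1\}$, where $W_t$ is now the stabilizer of $t$ inside $W(F_4)$.

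To finish, observe that $W_t\subseteq W(Spin(N))\rtimes S_3$ and, by the previous step, the projection $W_t\to S_3$ is injective. It therefore suffices to show that no non-trivial $\tau\in S_3$ lifts to an element of $W_t$; equivalently, $\tau(t)$ must not lie in the $W(Spin(N))$-orbit of $t$. Comparing first coordinates, the formulas (\ref{a}) together with (\ref{b}) give
$$(\tau(t))_1\in\{\hat{t_1},t_2,\hat{t_2},t_3,\hat{t_3}\}$$
as $\tau$ ranges over $S_3\setminus\{1\}$. Using the explicit expressions $(\ast)$ and the hypotheses that all parameters of $t_1,t_2,t_3$ are distinct and non-zero, none of $\hat{t_1},t_2,\hat{t_2},t_3,\hat{t_3}$ can equal a signed permutation (with an even number of sign changes) of $(\theta_1,\theta_2,\theta_3,\theta_4)$. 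Thus $W_t=\{1\}$, and consequently $Z_{F_4}(t)=\widetilde{T}$, so $(t_1,t_2,t_3)$ is strongly regular in $F_4$.

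The main obstacle is the last verification: the parameters of $t_2$ and $t_3$ are half-integer linear combinations of the $\theta_i$, and one must argue that under the stated genericity hypotheses no such combination can accidentally coincide with a signed permutation of $t_1$. This is where all three of the distinctness-plus-non-vanishing assumptions (rather than just the one on $t_1$) are used, and it is essentially the reason the hypothesis is phrased for each $t_i$, $1\leq i\leq 3$.
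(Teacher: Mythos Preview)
Your plan diverges from the paper's argument at the crucial point, and the step you flag as the ``main obstacle'' is precisely what the paper's method avoids.

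The paper does not rule out the $S_3$ factor by comparing parameters of $t_1$ with those of $t_2,\hat t_2,t_3,\hat t_3$. Instead it argues at the level of the centralizer in $F_4$: since no $t_i$ has a parameter equal to $0$, Lemma~\ref{fp} gives $\mathfrak{C}^{t_i}=\{0\}$ for each $i$, and from the explicit action of $(t_1,t_2,t_3)$ on $A$ one reads off $A^{t}=\mathbb{R}\times\mathbb{R}\times\mathbb{R}$. Any $\phi\in Z_{F_4}(t)$ must preserve $A^t$, hence lies in $\Aut(A,\mathbb{R}^3)\cong Spin(N)\rtimes S_3$. Now the key point: $F_4$ is simply connected, so by Proposition~\ref{conncent1} the centralizer $Z_{F_4}(t)$ is connected, forcing $Z_{F_4}(t)\subset Spin(N)$. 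This eliminates the $S_3$ contribution in one stroke, with no parameter comparison needed. From there Lemma~\ref{strreg} finishes as you indicate.

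Your route, by contrast, leaves a genuine gap. Showing that none of $\hat t_1,t_2,\hat t_2,t_3,\hat t_3$ lies in the $W(D_4)$--orbit of $t_1$ is not a ``short case check'': the entries of $t_2,t_3$ are half-sums of the $\theta_i$, and the hypotheses do not transparently exclude coincidences of multisets $\{\pm\theta_i\}$ with $\{\pm(\theta_j\pm\theta_k\pm\theta_l\pm\theta_m)/2\}$ modulo~$\mathbb{Z}$. Even the preliminary claim that distinct, nonzero parameters make $t_1$ strongly regular in $SO(N)$ needs care (e.g.\ one must also exclude relations $\theta_i\equiv -\theta_j$). The paper sidesteps all of this by the connectedness argument; that is the idea your plan is missing. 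Note also that Lemma~\ref{fp} is not an optional add-on for regularity in $SO(N)$ --- its actual role is to compute $A^t$.
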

\begin{proof}
Since $t_i$ does not have $0$ for all $i$, by Lemma \ref{fp}, $\mathfrak{C}^{t_i}=\{0\}$ $\forall i.$ Hence by this and the 
remark preceding 
Proposition 8.2,
$A^t=\mathbb{R}\times\mathbb{R}\times\mathbb{R}.$
So if $\phi\in Z_{F_4}(t),$
then $\phi(\mathbb{R}\times\mathbb{R}\times\mathbb{R})=\mathbb{R}\times\mathbb{R}\times\mathbb{R}$\\
$\Rightarrow \phi\in Aut(A,\mathbb{R}\times\mathbb{R}\times\mathbb{R})\cong Spin(N)\rtimes S_3$(by \textbf{[J]}, Theorem 8.)\\
$\Rightarrow Z_{F_4}(t)\subset Spin(N)\rtimes S_3$\\
$\Rightarrow Z_{F_4}(t)\subset Spin(N)$ (since $F_4$ is simply connected, $Z_{F_4}(t)$ is connected by Proposition \ref{conncent1}).\\
$\Rightarrow Z_{F_4}(t)\subset Z_{Spin(N)}(t).$ 

Since all parameters of $t_1$ are distinct and none of them is $0,$ 
the isotropy subgroup of $t_1$ in $WSO(N)$ is trivial. Note that $WSO(N)_{t_1}=Z_{SO(N)}(t_1)/T,$ where $T$ is the diagonal maximal torus in
$SO(N).$ Therefore, $WSO(N)_{t_1}=\{1\}\Rightarrow Z_{SO(N)}(t_1)=T,$ which means $t_1$  is strongly regular in $SO(N).$ Hence by
Lemma \ref{strreg}, $t=(t_1,t_2,t_3)$ is strongly regular in $Spin(N).$ Therefore, $Z_{F_4}(t)\subset Z_{Spin(N)}(t)=\widetilde{T}.$ This
is in fact an equality since, $\widetilde{T}\subset Z_{F_4}(t)$ for all $t\in \widetilde{T}.$ Thus $t$ is strongly regular in $F_4.$
 \end{proof}

We now proceed to calculate the semisimple genus number of a connected
algebraic group of type $F_4$ over an algebraically closed field $k$ of characteristic different from $2.$
Let $\mathfrak{C}$ and $\mathbb{H}$ be respectively the (split) octonion and quaternion algebras over
$k$, i.e. $\mathfrak{C}:=\mathbb{H}\oplus \mathbb{H},$ where
$$
\mathbb{H}:=\{ \begin{bmatrix}
                a&b\\
                c&d
               \end{bmatrix} : a,b,c,d \in k \},$$
under the usual matrix addition and multiplication with the norm $N:H\rightarrow k,$ defined as $N(x)=det(x),$ for $x\in \mathbb{H}.$ 
The norm for $\mathfrak{C}$ is given by $N((x,y))=det(x)-det(y),$ for $x,y\in \mathbb{H}$.
The conjugation in $\mathbb{H}$ is given by $$\overline{\begin{bmatrix}
                a&b\\
                c&d
               \end{bmatrix} }=\begin{bmatrix}
                d&-b\\
                -c&a
               \end{bmatrix}.$$
The multiplication and conjugation in $\mathfrak{C}$ are 
as follows:
\begin{align*}
 &(x,y)(u,v):=(xu+\bar{v}y,vx+y\bar{u}),\\
&\overline{(x,y)}:=(\bar{x},-y),
\end{align*}
where $x,y,u,v\in \mathbb{H}$.\\
We consider the following basis $\{ v_1,...,v_8\}$ of $\mathfrak{C}$:-\\
$$v_1=( \begin{bmatrix}
                1&0\\
                0&0
               \end{bmatrix},0),~ v_2=(\begin{bmatrix}
                0&-1\\
                0&0
               \end{bmatrix},0),~v_3=(0,\begin{bmatrix}
                -1&0\\
                0&0
               \end{bmatrix}),~v_4=(0,\begin{bmatrix}
                0&1\\
                0&0
               \end{bmatrix}),$$
$$v_5=(0,\begin{bmatrix}
                0&0\\
                1&0
               \end{bmatrix}),~v_6=(0,\begin{bmatrix}
                0&0\\
                0&1
               \end{bmatrix}),~v_7=(\begin{bmatrix}
                0&0\\
                1&0
               \end{bmatrix},0),~v_8=(\begin{bmatrix}
                0&0\\
                0&1
               \end{bmatrix},0).$$
The multiplication table for $\mathfrak{C}$ with respect to this basis is:\\

\begin{center}
\begin{tabular}{|l|l|l|l|l|l|l|l|l|}
\hline
$\cdotp$&$v_1$&$v_2$&$v_3$&$v_4$&$v_5$&$v_6$&$v_7$&$v_8$\\ \hline
$v_1$&$v_1$&$v_2$&$v_3$&$0$&$v_5$&$0$&$0$&$0$\\ \hline
$v_2$&$0$&$0$&$v_4$&$0$&$-v_6$&$0$&$-v_1$&$v_2$\\ \hline
$v_3$&$0$&$-v_4$&$0$&$0$&$v_7$&$-v_1$&$0$&$v_3$\\ \hline
$v_4$&$v_4$&$0$&$0$&$0$&$-v_8$&$-v_2$&$v_3$&$0$\\ \hline
$v_5$&$0$&$v_6$&$-v_7$&$-v_1$&$0$&$0$&$0$&$v_5$\\ \hline
$v_6$&$v_6$&$0$&$-v_8$&$v_2$&$0$&$0$&$-v_5$&$0$\\ \hline
$v_7$&$v_7$&$-v_8$&$0$&$-v_3$&$0$&$v_5$&$0$&$0$\\ \hline
$v_8$&$0$&$0$&$0$&$v_4$&$0$&$v_6$&$v_7$&$v_8$\\ \hline
\end{tabular}\\
\end{center}

With respect to the above  basis of $\mathfrak{C}$ the matrix of the bilinear form for the norm $N$ is 

$$\begin{bmatrix}
 {}&{}&1\\
{}&\ddots&{}\\
1&{}&{}\\
\end{bmatrix}$$ and
$$T:=\{diag(a,b,c,d,1/d,1/c,1/b,1/a)\in SO(N)|a,b,c,d\in k^\ast \}\subset SO(N)$$ is a maximal torus.
With the notation used for compact $F_4,$ any element of $Spin(N)$ corresponds 
uniquely to $(t_1,t_2,t_3)\in SO(N)^3$ such that 
$t_1(xy)=t_2(x)t_3(y)$ for all $x,y\in \mathfrak{C}$.\\
Let $t_1=diag(a,b,c,d,1/d,1/c,1/b,1/a)\in T.$ We can write
$t_1=s_{x_1}s_{y_1}...s_{x_4}s_{y_4},$  where $s_{x_i}$ denotes the reflection in the hyperplane perpendicular to $x_i$ and
 \begin{align*}
 &x_1=\sqrt{a}v_1+{\sqrt{a}}^{-1}v_8 ,~ y_1=v_1+v_8,
~x_2=\sqrt{b}v_2+{\sqrt{b}}^{-1}v_7,~ y_2=v_2+v_7\\
&x_3=\sqrt{c}v_3+{\sqrt{c}}^{-1}v_6,~ y_3=v_3+v_6,
~x_4=\sqrt{d}v_4+{\sqrt{d}}^{-1}v_5,~ y_4=v_4+v_5.
\end{align*}
Therefore, by Proposition 7.1, the corresponding $t_2,t_3$ are given by $t_2=l_{x_1}l_{\bar{y_1}}...l_{x_4}l_{\bar{y_4}}$  and\\
$t_3=r_{x_1}r_{\bar{y_1}}...r_{x_4}r_{\bar{y_4}}.$ So if we calculate $t_2$ and $t_3$ using  these formulas and the above multiplication
table we get (henceforth we shall denote an $8\times 8$ diagonal matrix of the form $diag(a,b,c,d,1/d,1/c,1/b,1/a)$ by
$(a,b,c,d)$ ),
\begin{align*}
 &t_1=(a,b,c,d),\\
&t_2=(\sqrt{a}\sqrt{b}\sqrt{c}/\sqrt{d},\sqrt{a}\sqrt{b}\sqrt{d}/\sqrt{c},\sqrt{a}\sqrt{c}\sqrt{d}/
\sqrt{b},\sqrt{b}\sqrt{c}\sqrt{d}/\sqrt{a}),\\
&t_3=(\sqrt{a}\sqrt{d}/\sqrt{b}\sqrt{c},\sqrt{b}\sqrt{d}/\sqrt{a}\sqrt{c},\sqrt{c}\sqrt{d}/\sqrt{a}\sqrt{b},
\sqrt{a}\sqrt{b}\sqrt{c}\sqrt{d}).
\end{align*}
Let us denote the above equations by $(\ast \ast).$

Now we can compute the isotropy classes in the Weyl group with respect to a maximal torus in $F_4$.
Let $T$ denote the diagonal maximal torus in $SO(N).$ Since any a maximal torus of $F_4$ sits inside a
copy of $Spin(N)\subset F_4$, we may work with 
$\widetilde{T}:=\{(t_1,t_2,t_3)\in T^3|t_1(xy)=t_2(x)t_3(y),\forall x,y\in T\}\subset RT(\mathfrak{C})\cong Spin(N).$

With this we can compute the isotropy subgroups of the Weyl group (the action of the Weyl group on the torus had already
been discussed before and we shall follow the same notations here). Recall that $W=((\mathbb{Z}/2)^3\rtimes S_4)\rtimes S_3$ is the Weyl
group of $F_4.$
In all the following cases the arguments for $W_t$ are exactly similar to the ones we had in the case for compact $F_4,$ only the roles 
played by $0$ and $1/2$ are replaced by $1$ and $-1$ respectively. With each of the following possibilities we refer to the corresponding
calculation done in the discussion on compact $F_4.$ In what follows, we denote a fixed square root of $-1$ by $i.$

\noindent\textbf{1.} $t_1=(1,1,1,1)=t_2=t_3$. In this situation clearly $W_t=W$ (case 1(a)).\\
\noindent\textbf{2.}\begin{align*}
            &t_1=t_2=(-1,-1,-1,-1)\\
&t_3=(1,1,1,1)
           \end{align*}
$W_t=((\mathbb{Z}/2)^3\rtimes S_4)\rtimes \{1,\tau_3\}$ (case 1(b)).

\noindent\textbf{3.}\begin{align*} 
&t_1=(1,1,1,-1)\\
&t_2=(-i,i,i,i)\\
&t_3=(i,i,i,i)
\end{align*}
$W_t=((\mathbb{Z}/2)^2\rtimes S_3)\rtimes\{1,\tau_1\}$ (case 1(c)).

\noindent\textbf{4.}\begin{align*}
            t_1=t_2=t_3=(1,1,-1,-1)
           \end{align*}
Note that all elements of $S_3$ fix this element $t$ and hence we have 
$W_t=(((\mathbb{Z}/2)\rtimes S_2)\times((\mathbb{Z}/2)\rtimes S_2))\rtimes S_3$ (case 1(e)).

\noindent\textbf{5.}\begin{align*}
            &t_1=(-1,-1,-1,1)\\
&t_2=(-i,i,i,i)\\ 
&t_3=(-i,-i,-i,-i)
           \end{align*}
Clearly no element from $S_3$ can belong to the isotropy, therefore $W_t=(\mathbb{Z}/2)^2\rtimes S_3.$ (case 1(d)).

\noindent\textbf{6.}$t_1=t_2=t_3=(1,1,c,c),$ where $c\neq 1,-1.$ Since any $S_3$ element leaves this fixed, we have
$W_t=((\mathbb{Z}/2\rtimes S_2)\times S_2)\rtimes S_3$ (case 1(h)).

\noindent\textbf{7.}\begin{align*}
            &t_1=t_2=(-1,-1,c,c)\\
&t_3=(1,1,c,c)
           \end{align*}
Here we observe that only $\tau_3\in S_3$ can contribute to the isotropy.
Hence $W_t=((\mathbb{Z}/2\rtimes S_2)\times S_2)\rtimes \{1,\tau_3\}$ (case 1(i)).

\noindent\textbf{8.}\begin{align*}
            &t_1=(1,b,b,b)\\
&t_2=(\sqrt{b},\sqrt{b},\sqrt{b},b\sqrt{b})\\
&t_3=(1/\sqrt{b},\sqrt{b},\sqrt{b},b\sqrt{b})
           \end{align*}
For this $W_t=S_3\rtimes\{1,\tau_1\}$ ( case 1(j)).

\noindent\textbf{9.}\begin{align*}
            &t_1=(-1,b,b,b)\\
&t_2=(i\sqrt{b},i\sqrt{b},i\sqrt{b},-ib\sqrt{b})\\
&t_3=(i/\sqrt{b},-i\sqrt{b},-i\sqrt{b},ib\sqrt{b})
           \end{align*}
where $b\neq1,-1$. $W_t=S_3$ (case 1(k)).

\noindent\textbf{10.}
\begin{align*}
 &t_1=(1,1,c,d)\\
&t_2=(\sqrt{c}/\sqrt{d},\sqrt{d}/\sqrt{c},\sqrt{c}\sqrt{d},\sqrt{c}\sqrt{d})\\
&t_3=(\sqrt{d}/\sqrt{c},\sqrt{d}/\sqrt{c},\sqrt{c}\sqrt{d},\sqrt{c}\sqrt{d})
\end{align*}
$W_t=(\mathbb{Z}/2\rtimes S_2)\rtimes\{1,\tau_1\}$ (case 1(l)).

\noindent\textbf{11.}
\begin{align*}
 &t_1=(-1,-1,c,d)\\
&t_2=(-\sqrt{c}/\sqrt{d},-\sqrt{d}/\sqrt{c},\sqrt{c}\sqrt{d},\sqrt{c}\sqrt{d})\\
&t_3=(\sqrt{d}/\sqrt{c},\sqrt{d}/\sqrt{c},-\sqrt{c}\sqrt{d},-\sqrt{c}\sqrt{d})
\end{align*}
$W_t=\mathbb{Z}/2\rtimes S_2$ (case 1(m)).

\noindent\textbf{12.}
\begin{align*}
 &t_1=(1,b,b,d)\\
&t_2=(b/\sqrt{d},\sqrt{d},\sqrt{d},b\sqrt{d})\\
&t_3=(\sqrt{d}/b,\sqrt{d},\sqrt{d},b\sqrt{d})
\end{align*}
$W_t=S_2\rtimes\{1,\tau_1\}$ and if $b=\sqrt{d}$ , we have $t_1=t_2=t_3$ and hence $W_t=S_2\rtimes S_3$ (case 1(n)).

\noindent\textbf{13.}
\begin{align*}
 &t_1=(1,b,c,d)\\
&t_2=(\sqrt{b}\sqrt{c}/\sqrt{d},\sqrt{b}\sqrt{d}/\sqrt{c},\sqrt{c}\sqrt{d}/\sqrt{b},\sqrt{b}\sqrt{c}\sqrt{d})\\
&t_3=(\sqrt{d}/\sqrt{b}\sqrt{c},\sqrt{b}\sqrt{d}/\sqrt{c},\sqrt{c}\sqrt{d}/\sqrt{b},\sqrt{b}\sqrt{c}\sqrt{d})
\end{align*}

$W_t=\{1,\tau_1\}$ and if $\sqrt{b}\sqrt{c}=\sqrt{d}$ then $t_1=t_2=t_3$ and $W_t=S_3$ (case 1(p)).

\noindent\textbf{14.}
\begin{align*}
 &t_1=(-1,b,b,d)\\
&t_2=(ib/\sqrt{d},i\sqrt{d},i\sqrt{d},-ib\sqrt{d})\\
&t_3=(i\sqrt{d}/b,-i\sqrt{d},-i\sqrt{d},ib\sqrt{d})
\end{align*}
$W_t=S_2$ (case 1(o)).

Next we consider $(t_1,t_2,t_3)$ such that none of the coordinates have $1$ as a parameter and all parameters of $t_i$ are distinct. Since
we are over an algebraically closed field $k$, Theorem \ref{strreg2}  holds in this case with the following modification:
\begin{theorem}
 If $t_i$ does not have $1$ as a parameter and all parameters in $t_i$ are distinct, $1\leq i\leq 3,$ 
 then $(t_1,t_2,t_3)$ is strongly regular in $F_4$. 
\end{theorem}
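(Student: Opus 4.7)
The plan is to mirror the proof of Theorem \ref{strreg2} (the compact case) verbatim, replacing the role of the parameter $0$ with that of the parameter $1$. The two key auxiliary statements, Lemma \ref{fp} and Lemma \ref{strreg}, both admit direct algebraic analogues, and once these are in place the argument is formally the same.

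First I would establish the algebraic analogue of Lemma \ref{fp}: if $t_i=\operatorname{diag}(a,b,c,d,1/d,1/c,1/b,1/a)\in SO(N,k)$ has none of $a,b,c,d$ equal to $1$, then $\mathfrak{C}^{t_i}=\{0\}$. This is immediate since the eigenvalues of $t_i$ on the basis $\mathfrak{B}$ are exactly $a,b,c,d,1/d,1/c,1/b,1/a$, and $1$ occurs as an eigenvalue iff one of $a,b,c,d$ equals $1$. Applying this to all three $t_1,t_2,t_3$, the same matrix computation as in Lemma 7.2 then gives $A^{t}=k\times k\times k$. Hence any $\phi\in Z_{F_4}(t)$ stabilizes $A^t$, so $\phi\in \operatorname{Aut}(A, k\times k\times k)\cong Spin(N)\rtimes S_3$ by \textbf{[J]}, Theorem 8. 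Since $F_4$ is simply connected, Proposition \ref{conncent2} gives $Z_{F_4}(t)$ connected, so it lies in the identity component, forcing $Z_{F_4}(t)\subset Spin(N)$, hence $Z_{F_4}(t)\subset Z_{Spin(N)}(t)$.

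Next I would establish the algebraic analogue of Lemma \ref{strreg}: if $t_1\in SO(N)$ is strongly regular then $(t_1,t_2,t_3)$ is strongly regular in $Spin(N)$. The proof is word for word that of Lemma \ref{strreg}: any $(s_1,s_2,s_3)\in Z_{Spin(N)}(t)$ satisfies $s_1 t_1=t_1 s_1$, so $s_1\in Z_{SO(N)}(t_1)=T$, and then Lemma \ref{mtspin} (which holds over any algebraically closed $k$ with $\operatorname{char}\neq 2$ by the same triality argument) forces $s_2,s_3\in T$, giving $(s_1,s_2,s_3)\in\widetilde{T}$. Finally, under the hypothesis that the four parameters of $t_1$ are distinct and none equal $1$, the isotropy $WSO(N)_{t_1}$ is trivial: any signed-permutation element of $WSO(N)$ fixing $t_1$ must preserve both the multiset of parameters (distinctness rules out nontrivial permutations) and, having no parameter equal to its own inverse in the relevant sense, cannot perform any nontrivial $a\mapsto a^{-1}$ flip either. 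Thus $Z_{SO(N)}(t_1)=T$.

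Combining: $Z_{F_4}(t)\subset Z_{Spin(N)}(t)=\widetilde{T}$, and the reverse inclusion $\widetilde{T}\subset Z_{F_4}(t)$ is automatic, so equality holds and $t$ is strongly regular in $F_4$. The only mildly nontrivial step is the analogue of Lemma \ref{fp}, where one must be a little careful to identify correctly which diagonal entries are "parameters" in the algebraic convention $t_i=(a,b,c,d)$ versus the eight literal diagonal entries; but once this book-keeping is done, the argument goes through without incident.
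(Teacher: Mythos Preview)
Your proposal is correct and follows essentially the same route as the paper's own proof: the paper proves the analogue of Lemma~\ref{fp} (that no parameter equal to $1$ forces $\mathfrak{C}^{t_i}=\{0\}$), remarks that Lemma~\ref{strreg} carries over unchanged, and then defers to Theorem~\ref{strreg2} with $\mathbb{R}$ replaced by $k$---exactly the outline you give, and you correctly substitute Proposition~\ref{conncent2} for Proposition~\ref{conncent1} in the connectedness step. One small remark: your sketch of why $t_1$ is strongly regular in $SO(N)$ treats only the cases $a_i=a_j$ and $a_i=a_i^{-1}$, not $a_i=a_j^{-1}$ for $i\neq j$; the paper is equally silent here, but it is worth noting that this last possibility is in fact excluded by the hypothesis on $t_2,t_3$ (e.g.\ $ab=1$ forces two parameters of $t_3$ to coincide via $(\ast\ast)$).
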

\begin{proof}
 Note that with the hypothesis on $t_i,$ $\mathfrak{C}^{t_i}=\{0\}$ for all $i$. For if not,
let $x(\neq 0)\in \mathfrak{C}^{t_i}$ for some $i$. Then
$t_i(x)=x\Rightarrow$ some parameter of $t_i$ is $1$ since $x$ is assumed to be non zero, a contradiction.
Also note that Lemma \ref{strreg} holds in this case too.
The rest of the proof is the same as that of Theorem \ref{strreg2}, with $\mathbb{R}$ replaced by $k$.
\end{proof}
We record the above discussion as
\begin{theorem}
 The genus number of a compact simply connected Lie group or a
 simply connected algebraic group over an algebraically closed field,
 of type $F_4$ is $17.$
\end{theorem}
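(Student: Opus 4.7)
The plan is to invoke Theorem \ref{genus} (compact case) and Theorem \ref{genus2} (algebraic case), which translate the genus number of $F_4$ into the number of conjugacy classes of isotropy subgroups $W_t$ as $t$ varies over a fixed maximal torus $\widetilde{T}$. I take $\widetilde{T} \subset Spin(N) \subset F_4$ and identify it with the torus of related triples $(t_1,t_2,t_3)$ (Lemma \ref{mtspin}), and take $W = WSpin(N) \rtimes S_3$ with the $S_3$ factor acting by the outer automorphisms $\tau_1, \tau_2, \tau_3$ described in $(\ref{a})$. Since the equations $(\ast)$ (compact) and $(\ast\ast)$ (algebraic) express $t_2$ and $t_3$ explicitly in terms of the parameters of $t_1$, the determination of $W_t$ becomes an analysis controlled entirely by those parameters.

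The bulk of the argument is a case analysis organized by how many parameters of $t_1$ equal the special values ($0$ or $1/2$ in the compact case, $\pm 1$ in the algebraic case) and by the coincidence pattern among the remaining parameters. In each case I must determine two pieces of data: the $WSpin(N)$-part of $W_t$, read off from the individual sign-change and permutation symmetries of $t_1, t_2, t_3$ viewed as elements of the $B_4$ torus; and the $S_3$-part, which records which outer automorphisms $\tau_i$ stabilize the whole triple. The identity $\hat{t} = (-\gamma_1/\pi, \gamma_2/\pi, \gamma_3/\pi, \gamma_4/\pi)$ of $(\ref{b})$ is the decisive tool here, since a given $\tau_i$ stabilizes $(t_1, t_2, t_3)$ only when certain coordinates of the $t_j$ take special values or when the appropriate relation $t_j = \hat{t}_k$ holds.

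For the maximally generic configuration---no parameter equal to a special value and all parameters of each $t_i$ distinct---I would invoke the strongly-regular-element argument encapsulated in Lemmas \ref{fp}, \ref{strreg} and Theorem \ref{strreg2}: the fixed subalgebra $A^t$ reduces to the three-dimensional subalgebra $S$, which forces $Z_{F_4}(t) \subseteq Aut(A,S) \cong Spin(N) \rtimes S_3$; connectedness of $Z_{F_4}(t)$ (Proposition \ref{conncent1}) eliminates the outer $S_3$ factor; and triviality of the $WSpin(N)$-stabilizer of $t_1$ yields $Z_{F_4}(t) = \widetilde{T}$, so $W_t = \{1\}$.

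Collating the non-conjugate isotropy subgroups produced by the case analysis gives the claimed count of $17$. The main obstacle I anticipate is the bookkeeping: ensuring exhaustiveness of the cases and, more delicately, correctly distinguishing conjugacy classes in $W$. In particular, a copy of $S_3$ sitting inside $WSpin(N)$ (permuting three equal coordinates of $t_1$) is not conjugate in $W$ to the outer copy $\{1\} \rtimes S_3$, since $WSpin(N)$ is normal in $W$; and subgroups of the form $H \rtimes \{1, \tau_i\}$ versus $H \rtimes S_3$ must be separated by tracking both group orders and how the outer factor meshes with $H$ under conjugation by the full Weyl group $W$. Getting these identifications right, rather than any single computation, is what determines the final count.
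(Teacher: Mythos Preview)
Your proposal is correct and follows essentially the same approach as the paper: reduction via Theorems \ref{genus} and \ref{genus2} to the Weyl-group action on $\widetilde{T}$ realized inside $Spin(N)$ as related triples, the explicit formulas $(\ast)$ and $(\ast\ast)$ together with the identity $(\ref{b})$ to drive a case analysis on the parameters of $t_1$, and the strong-regularity argument (Lemmas \ref{fp}, \ref{strreg}, Theorem \ref{strreg2}) for the generic stratum. One minor slip: the torus in question is the $D_4$ (not $B_4$) torus, since $Spin(N)=Spin(8)$ and $WSpin(N)=(\mathbb{Z}/2)^3\rtimes S_4$ only allows an even number of sign changes; this does not affect your outline but matters when you actually read off the $WSpin(N)$-part of each $W_t$.
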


\section{$G_2$}

\begin{definition}
 Let $\mathfrak{C}$ denote the octonion division algebra over $\mathbb{R}$. Then $Aut(\mathfrak{C})$ is the compact connected Lie group 
of type $G_2$.
\end{definition}
Conjugacy classes of centralizers in anisotropic forms of $G_2$ have been explicitly calculated in \textbf{[S]}. 
Here we count the number of such classes using 
a different technique.
Consider a maximal torus $T\subset G_2$. Then $T$ sits inside a copy of $SU(3)\subset G_2$.
If $K\subset \mathfrak{C}$ be a quadratic extension of $\mathbb{R},$ then $Aut(\mathfrak{C}/K)\cong SU(3),$
where $Aut(\mathfrak{C}/K)$ is the group of automorphisms of $\mathfrak{C}$ fixing $K$ point wise. The Weyl group of 
$G_2$ is $WG_2\cong WSU(3)\rtimes S_2,$ note that $S_2=Out(SU(3)).$ 
Let us consider the diagonal maximal torus $T$ in $SU(3)$ i.e. the one consisting of all 
diagonal matrices $t=(z_1,z_2,z_3)$, $z_i\in S^1$ and $z_1z_2z_3=1.$ The action of $WG_2$ on $T$ is given by
$$(\alpha,\beta)(z_1,z_2,z_3)=(\beta z_{\alpha^{-1}(1)},\beta z_{\alpha^{-1}(2)},\beta z_{\alpha^{-1}(3)}),$$
where $\alpha\in S_3$ , $\beta\in S_2$ and $\beta(z_i)=\overline{z_i}$ for $\beta\neq 1\in S_2.$
With this action, we now consider the various possibilities for an element $diag(z_1,z_2,z_3)\in SU(3)$ and calculate their stabilizers
in $WG_2.$

\noindent\textbf{(a)}  If $z_1\neq z_2\neq z_3,z_i$,then clearly $(WG_2)_t=\{1\}.$

\noindent\textbf{(b)}  If $z_1=z_2=z_3\in \mathbb{R}$ then $(WG_2)_t= S_3\rtimes S_2$.

\noindent\textbf{(c)}  If $z_1=z_2=z_3\in \mathbb{C-R}$ then $(WG_2)_t=S_3, $ since $Out(SU(3))$ acts non trivially.

\noindent\textbf{(d)}  If $z_1=z_2\neq z_3,z_i\in \mathbb{C-R}$  then $(WG_2)_t=S_2\subset WSU(3))$ as $Out(SU(3))$ acts non trivially.

\noindent\textbf{(e)}  If $z_1=z_2\neq z_3,z_i\in \mathbb{R}$ then $(WG_2)_t=S_2\rtimes S_2$ as $S_2$ leaves 
this element fixed and $S_2\subset
WSU(3)$ further acts trivially on it.

\noindent\textbf{(f)}  If $t=(1,exp(i\theta),exp(-i\theta))$ with $\theta \neq k\pi$ for any integer $k$, then 
$(WG_2)_t=\{(1,1),(\alpha,\beta)\}\equiv \mathbb{Z}/2$, where $\alpha\in S_3$ is the transposition $(2~3)$ and $\beta\in S_2$
is the transposition $(1~2).$

If we consider a connected algebraic group of type $G_2$ over an algebraically closed field $k,$
the semisimple genus number is the same. In this case, we work with the Zorn matrix model of split
 octonions and consider
$k\times k\subset \mathfrak{C}$ as the diagonal subalgebra. Then $Aut(\mathfrak{C})/(k\times k)\cong SL(3.)$ Consider the diagonal maximal
torus $T:=\{diag(a_1,a_2,a_3)\in SL(3)|a_1a_2a_3=1\}\subset SL(3),$ then $T$ is a maximal torus in $G_2.$ 
The Weyl group $G_2$ is $WG_2\cong WSL(3)\rtimes S_2\cong S_3\rtimes S_2. $ The action of
$WG_2$ on $T$ is given by $$(\alpha,\beta)(a_1,a_2,a_3)=(\beta a_{\alpha^{-1}(1)},\beta a_{\alpha^{-1}(2)},\beta a_{\alpha^{-1}(3)}),$$
where $\alpha\in S_3$ , $\beta\in S_2$ and $\beta(a_i)=1/{a_i}$ for $\beta\neq 1\in S_2$. 
The conjugacy classes of isotropy subgroups of $WG_2$ are as listed 
below:
(the arguments being same as the previous ones.)

\noindent\textbf{(a)} If $a_1\neq a_2\neq a_3$,$a_i\neq 1,-1$ and $a_i\neq 1/a_j$ for $i\neq j$, then $(WG_2)_t=\{1\}$

\noindent\textbf{(b)} If $a_i=1$ for all $i$, with , $W_t=(WG_2)$.

\noindent\textbf{(c)} If $a_i=\omega$ for all $i$,where $\omega$ is a cube root of unity other than $1$, $(WG_2)_t=S_3$.

\noindent\textbf{(d)} If $a_1=a_2\neq a_3$ with $a_1\neq 1,-1$, $(WG_2)_t=S_2$.

\noindent\textbf{(e)} If $a_1=a_2=1=-a_3$ then $(WG_2)_t=S_2\rtimes S_2$.

\noindent\textbf{(f)} If $a_1=1,a_2=1/a_3$ with $a_2\neq 1,-1$ then $(WG_2)_t=\{(1,1),(\alpha,\beta)\}\equiv \mathbb{Z}/2$,
 where $\alpha\in S_3$ is the transposition $(2~3)$ 
and $\beta\in S_2$ is the transposition $(1~2).$

The preceding discussion is recorded as,
\begin{theorem}
 The genus number of a compact simply connected Lie group 
or a simply connected algebraic group over an algebraically closed field,
  of type $G_2$ is $6.$
\end{theorem}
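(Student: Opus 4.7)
The plan is to invoke the bijection of Theorems \ref{genus} and \ref{genus2}, which reduces the computation of the (semisimple) genus number to counting the number of conjugacy classes of isotropy subgroups $W_t \subseteq W(G_2)$ as $t$ ranges over a fixed maximal torus $T$. Thus the entire problem becomes combinatorial once $T$ and the action of $W(G_2)$ on $T$ are made explicit.

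To make this concrete, I would first realize $T$ inside an $SU(3) \subset G_2$ (respectively $SL_3 \subset G_2$ in the algebraic case) by using the description $Aut(\mathfrak{C}/K) \cong SU(3)$ for a quadratic subfield $K \subset \mathfrak{C}$ (respectively $Aut(\mathfrak{C})/(k\times k) \cong SL_3$ using the diagonal subalgebra in the split octonion model). Under this identification, $T$ becomes the diagonal maximal torus of $SU(3)$ (resp. $SL_3$), and the Weyl group $W(G_2) \cong W(SU(3)) \rtimes S_2 = S_3 \rtimes S_2$, with the outer $S_2$ factor $Out(SU(3))$ acting on $T$ by complex conjugation (resp. inversion) of the diagonal entries. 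This description together with the semidirect product action $(\alpha,\beta)(z_1,z_2,z_3) = (\beta z_{\alpha^{-1}(1)}, \beta z_{\alpha^{-1}(2)}, \beta z_{\alpha^{-1}(3)})$ gives an explicit formula to work with.

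Next I would enumerate all possible equality patterns among $z_1, z_2, z_3$, keeping track of whether each $z_i$ is fixed by the outer $S_2$ (i.e.\ real in the compact case, or $\pm 1$ in the algebraic case), and of whether pairs satisfy the outer-$S_2$-relation $z_i = \bar z_j$ (resp. $z_i = z_j^{-1}$). For each pattern I would read off $W_t$ as a subgroup of $S_3 \rtimes S_2$ and verify that different patterns yield non-conjugate subgroups (mostly by order considerations, plus a check distinguishing those with nontrivial outer component from those without). My expectation, matching the discussion preceding the theorem, is that the six distinct isotropy patterns $(a)$--$(f)$ listed there (trivial stabilizer, full $S_3 \rtimes S_2$, $S_3$, $S_2 \rtimes S_2$, $S_2$, and the twisted $\mathbb{Z}/2$ generated by $((2\,3), (1\,2))$) exhaust all cases.

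The main obstacle is the bookkeeping in the last step: one must be careful that the twisted $\mathbb{Z}/2$ appearing in case $(f)$ really is not conjugate to the "inner" $S_2 \subset S_3$ from case $(d)$, and that the pattern $z_1=z_2=z_3$ really splits into two distinct conjugacy classes depending on whether the common value is fixed by the outer involution (cases $(b)$ and $(c)$). Both of these are resolved by a direct inspection inside the finite group $S_3 \rtimes S_2$, and the argument is identical (up to obvious substitutions) in the compact and algebraic cases, so both statements of the theorem follow simultaneously.
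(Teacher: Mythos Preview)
Your proposal is correct and follows essentially the same approach as the paper: reduce via Theorems~\ref{genus} and~\ref{genus2} to counting $W$-conjugacy classes of stabilizers $W_t$, realize $T$ inside $SU(3)$ (resp.\ $SL_3$) with $W(G_2)\cong S_3\rtimes S_2$ acting by permutation and conjugation/inversion, and enumerate the six patterns $(a)$--$(f)$. Your added remark about verifying that the twisted $\mathbb{Z}/2$ of case~$(f)$ is not conjugate to the inner $S_2$ of case~$(d)$ is a point the paper leaves implicit, so your write-up is if anything slightly more careful.
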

We now tabulate the results obtained so far:
\begin{center}
\begin{tabular}{|l|l|p{6 cm}|p{5cm}|}
\hline
\textbf{Group} &\textbf{Weyl group}&\textbf{Stabilizers} & \textbf{Genus Number}\\
\hline
$A_n$ & \small{$S_{n+1}$} &\small{$S_{n_1}...S_{n_k}$, where $n_1+...n_k=n+1$} & \small{$p(n+1)$}\\ \hline
$B_n$ &\small{ $(\mathbb{Z}/2)^n \rtimes S_n$}& 
\small{$(((\mathbb{Z}/2)^{i-1}\rtimes S_i)\times( (\mathbb{Z}/2)^{n_1-i}\rtimes S_{n_1-i}))\times S_{n_2}\times...\times S_{n_k}$ ,
where, $n_1+...+n_k=n$} &\small{ $\sum_{i=0}^n (i+1)p(n-i)$}\\ \hline
$C_n$ &$(\mathbb{Z}/2)^n \rtimes S_n$ 
&\small{$(((\mathbb{Z}/2)^i\rtimes S_i)\times ((\mathbb{Z}/2)^{n_1-i}\rtimes S_{n_1-i}))\times S_{n_2}\times...\times S_{n_k}$,
where $n_1+...n_k=n$} & \small{$\sum_{i=0}^n([i/2]+1)p(n-i)$}\\ \hline
$D_n$, $n$ odd &\small{$(\mathbb{Z}/2)^{n-1} \rtimes S_n$ }& 
\small{$(((\mathbb{Z}/2)^{i-1}\rtimes S_i)\times (\mathbb{Z}/2)^{n_1-i-1}\rtimes S_{n_1-i}))\times S_{n_2}\times...\times S_{n_k},$
where $n_1+...+n_k=n$}&
\small{$\sum_{i=0}^n ([i/2]+1)p(n-i)$}\\ \hline
$D_n$, $n=2k$ &\small{ $(\mathbb{Z}/2)^{n-1} \rtimes S_n$}&
\small{$(((\mathbb{Z}/2)^{i-1}\rtimes S_i)\times ((\mathbb{Z}/2)^{n_1-i-1}\rtimes S_{n_1-i})\times S_{n_2}\times...\times S_{n_l},$
where $n_1+...+n_l=n$ with at least one $n_i$ odd and $H(2k_1)\times S_{2k_2}\times...\times S_{2k_s},$
where $k_1+...+k_s=k$ and $H(2k_1)$ is a subgroup of order $(2k)!$ not conjugate to $S_{2k_1}$}&
\small{$\sum_{i=0}^n ([i/2]+1)p(n-i)) +p(k)$}\\ \hline
$F_4$&\small{$((\mathbb{Z}/2)^3\rtimes S_4)\rtimes S_3$}&As noted in Section 7&\small{17}\\ \hline
$G_2$&\small{$S_3\rtimes S_2$}& As noted in Section 8&\small{6}\\ \hline
\end{tabular}
\end{center}
\section{Computations for the Lie algebras}
If $G$ be a compact connected Lie group (or a connected reductive algebraic group over an algebraically closed field)
 with the Lie algebra denoted by $\mathfrak{g}$, the
orbit structure of the action of 
$Ad_G$ on $\mathfrak{g}$ can be neatly described in terms of the action of $WG$ on the Cartan subalgebra 
$\mathfrak{t}\subset \mathfrak{g}.$
 In this section we calculate the conjugacy classes of isotropy subgroups of $WG$ with respect to its action on $\mathfrak{t}.$ 
We begin with the following basic result ;
\begin{lemma}
 With respect to the action,
$Ad:G\longrightarrow Aut(\mathfrak{g})$ defined by $g\mapsto Ad_g,$ where $Ad_g(x)=gxg^{-1},$
(having embedded $G$ in a suitable $GL_n$) there is a bijection between the 
conjugacy classes of centralizers of semisimple elements in $\mathfrak{g}$ in $G$ and the conjugacy classes of centralizers of elements of 
a Cartan subalgebra  in $WG$.
\end{lemma}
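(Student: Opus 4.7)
The plan is to mimic the arguments of Theorems \ref{genus} and \ref{genus2}, substituting the root-theoretic input by its Lie-algebra analogue. First I would observe that every semisimple element of $\mathfrak{g}$ is $Ad_G$-conjugate to an element of the Cartan subalgebra $\mathfrak{t}$, so that $\{[Z_G(x)] : x \in \mathfrak{g} \textup{ semisimple}\}$ coincides with $\{[Z_G(x)] : x \in \mathfrak{t}\}$. I would then define the candidate bijection by $[Z_G(x)] \mapsto [W_x]$, where $W_x$ denotes the isotropy subgroup of $x$ under the linear action of $WG$ on $\mathfrak{t}$.

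Well-definedness would proceed by the same torus-transport argument used in Theorem \ref{genus}: if $Z_G(x) = g Z_G(y) g^{-1}$ for $x, y \in \mathfrak{t}$, then both $T$ and $gTg^{-1}$ are maximal tori of $Z_G(x)$, so after multiplying $g$ by a suitable element of $Z_G(x)$ one may take $h \in N_G(T)$ with $h Z_G(y) h^{-1} = Z_G(x)$. Writing $a := Ad_h(y) \in \mathfrak{t}$, one then has $Z_G(a) = Z_G(x)$ and hence $W_x = N_{Z_G(x)}(T)/T = N_{Z_G(a)}(T)/T = W_a = [h] W_y [h]^{-1}$. Surjectivity is immediate from the construction.

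For injectivity, I would start from $W_x = [h] W_y [h]^{-1}$ with $h \in N_G(T)$, set $a := Ad_h(y) \in \mathfrak{t}$, and seek to conclude $Z_G(x) = Z_G(a)$ from $W_x = W_a$. The crucial point is that $W_x$ recovers the subsystem $\Phi_x := \{\alpha \in \Phi : \alpha(x) = 0\}$: the reflection $w_\alpha$ acts on $\mathfrak{t}$ by $v \mapsto v - \alpha(v)\alpha^\vee$, so $w_\alpha \in W_x$ if and only if $\alpha(x) = 0$, and by Chevalley's theorem on finite reflection groups, $W_x$ is generated by the reflections it contains. The Lie-algebra analogue of Proposition \ref{centralizer1} then gives $Z_G(x)^\circ = \langle T, U_\alpha : \alpha \in \Phi_x \rangle$, so $\Phi_x = \Phi_a$ forces $Z_G(x)^\circ = Z_G(a)^\circ$. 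In both settings of the Lemma, $Z_G(x)$ itself is connected for semisimple $x \in \mathfrak{g}$, since it coincides with the centralizer of a torus (the closure of the one-parameter subgroup through $x$, respectively the algebraic torus generated by $x$), and centralizers of tori in compact connected Lie groups and in connected reductive algebraic groups are known to be connected. Hence $Z_G(x) = Z_G(a) = h Z_G(y) h^{-1}$, establishing injectivity.

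The step I expect to be the main obstacle is cleanly assembling the two Lie-algebra inputs invoked above -- the root-theoretic description of $Z_G(x)^\circ$ for $x \in \mathfrak{t}$ and the connectedness of the full centralizer $Z_G(x)$ -- with the correct characteristic hypotheses in the algebraic setting, since unlike in Theorems \ref{genus} and \ref{genus2} the group $G$ is not assumed here to be simply connected.
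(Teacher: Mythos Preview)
Your proposal is correct and follows exactly the approach the paper indicates: its entire proof is the single sentence ``Consider the map $[G_x]\mapsto [WG_x],$ where $x\in \mathfrak{t}.$ To show this map a bijection we follow exactly the same line of argument as in Theorems \ref{genus} and \ref{genus2}.'' You have simply carried out that program in detail, and in fact more carefully than the paper does---your observation that $w_\alpha(x)=x\Leftrightarrow\alpha(x)=0$ holds automatically (so no analogue of Lemma \ref{root} is needed), and your attention to connectedness of $Z_G(x)$ without a simply-connected hypothesis, are points the paper's one-line proof does not explicitly address.
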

\begin{proof}
 Consider the map $[G_x]\mapsto [WG_x],$ where $x\in \mathfrak{t}.$ To show this map a bijection we follow exactly
the same line of argument as in Theorems \ref{genus} and \ref{genus2}.
\end{proof}
For determining the stabilizers in the Weyl group we follow the same line of argument as in the case of groups in the previous sections.

\subsection{ $A_n$}
When $G$ is the Lie group $SU(n+1),$ the corresponding Lie algebra $\mathfrak{su}(n+1)$ is the set of all $(n+1)\times (n+1)$
 trace zero skew-hermitian matrices,while for $G=SL(n+1)$, $\mathfrak{g}$ consists of all trace zero $(n+1)\times (n+1)$ matrices. 
The Cartan subalgebra
in the above cases are given by:
$$\mathfrak{t}=\{(a_1i,...,a_{n+1}i)\in \mathbb{M}_n(\mathbb{C})|a_1+...+a_{n+1}=0\}\subset \mathfrak{su}(n+1)$$
and,
$$\mathfrak{t}=\{(a_1,...,a_{n+1})\in \mathbb{M}_n(k)|a_1+...+a_{n+1}=0\}\subset \mathfrak{sl}(n+1).$$
We have $WG=S_{n+1}$ and it acts on $\mathfrak{t}$ by permuting the entries in both cases. 
Hence by the argument followed in Section 3, we see that the 
number of conjugacy classes of isotropy subgroups is $p(n+1)$. The subgroups are of the form 
$S_{n_1}...S_{n_k}$  for a partition $(n_1,...,n_k)$ of $(n+1)$.
\subsection{$B_n$}
For the Lie algebra of type $B_n,$ the Cartan subalgebra
$\mathfrak{t}$ consists of all block diagonal matrices of the form $(A_1,...,A_n,0)$, where
$$A_i=\begin{bmatrix}
       0&a_i\\
-a_i&0
      \end{bmatrix}$$
is the $i-$th block with $a_i\in \mathbb{R}$.
And for $B_n$ over an algebraically closed field $k$ the Cartan subalgebra consists of all diagonal matrices of the form
$(a_1,...,a_n,-a_1,...,-a_n,0)$, where $a_i\in k.$ So in either situation we note that the elements of the Cartan subalgebra can be 
parametrized by the $n$-tuples $(a_1,...,a_n)$ with $a_i\in k$.
The Weyl group $W=(\mathbb{Z}/2)^n\rtimes S_n$ acts on $\mathfrak{t}$ by permuting the elements
, followed by a change of sign.

Let $(n_1,...,n_k) $ be a partition of $n$ such that $n_1 $ denotes the number of $0^,$s and $n_i$ for $i\neq 1$ denotes 
the number of equal parameters. For such an element the isotropy subgroup is 
$((\mathbb{Z}/2)^{n_1-1}\rtimes S_{n_1})\times S_{n_2}\times...\times S_{n_k}$
by an argument similar to one seen in $\S 4.$ Hence the number of isotropy classes is
$$\sum_{i=0}^n p(n-i).$$
\subsection{$C_n$}
The Cartan subalgebra $\mathfrak{t}$ consists of all diagonal matrices of the form
$(a_1,...,a_n,-a_1,...,-a_n)$ with $a_i\in k$.
The Weyl group being the same as that of $B_n$, we have the same number of isotropy classes in this case 
also,i.e $$ \sum_{i=0}^n p(n-i)$$.
\subsection{$D_n$}
Here the Cartan subalgebra is same as that of $B_n$ and the Weyl group $W=(\mathbb{Z}/2)^{n-1}\rtimes S_n$ acts on 
$\mathfrak{t}$ by permuting the parameters and changing the signs of an even number of them.

If $n$ is odd, then for a partition $(n_1,...,n_k)$ of $n$, where $n_i^,s$ are as in Section 9.2, the isotropy subgroup of the Weyl group
is $((\mathbb{Z}/2)^{n_1-1}\rtimes S_{n_1})\times S_{n_2}\times...\times S_{n_k}$ and hence the total number of isotropy classes is
$$\sum_{i=0}^n p(n-i).$$

However if $n=2k$,then if at least one zero occurs as one of the parameters of $t\in \mathfrak{t}$, then the isotropy subgroup is obtained
as above. But if no zero occurs i.e $n_1=0$, then for each partition of $n$ containing only even integers we have a isotropy subgroup not 
conjugate to any one of the above, as we have seen in the group case (see $\S 6.$). Thus the total number of isotropy classes for $n=2k$ is
$$\sum_{i=0}^n p(n-i) +p(k) .$$
\subsection{$G_2$}
In this case, we consider a subalgebra $\mathfrak{su}(3)$ (over reals) or $\mathfrak{sl}(3)$ (over an algebraically close field $k$)
inside $\mathfrak{g}_2$ and a Cartan subalgebra of $\mathfrak{g}_2$ embeds in one such subalgebra. 
Hence, each element of the Cartan subalgebra can be considered as all tuples $(a_1,a_2,a_3),$ $a_i\in k, $ such that $a_1+a_2+a_3=0.$ The 
Weyl group $WG_2\cong S_3\rtimes S_2$ (see section 9) acts on these tuples as,
$$(\alpha,\beta)(a_1,a_2,a_3)=(\beta a_{\alpha^{-1}(1)},\beta a_{\alpha^{-1}(2)},\beta a_{\alpha^{-1}(3)}),$$
where $\alpha\in S_3$ , $\beta\in S_2$ and $\beta(a_i)=-a_i$ for $\beta\neq 1\in S_2.$ Thus we have the following possibilities:

\noindent\textbf{(a)} If $t=(0,0,0)$ then clearly, $(WG_2)_t=WG_2$.

\noindent\textbf{(b)} If $t=(a,a,-2a)$ then $(WG_2)_t=S_2\subset WSL(3)$ since the other $S_2$ factor acts non trivially.

\noindent\textbf{(c)} If $t=(a,b,-a-b)$ with $a\neq b\neq -(a+b)$, then clearly, $(WG_2)_t=\{1\}$.

\noindent\textbf{(d)} If $t=(0,a,-a)$ with $a\neq 0$ then $(WG_2)_t=\{(1,1),(\alpha,\beta)\}\cong \mathbb{Z}/2,$
 where $\alpha=(2~3)\in S_3$ 
and $\beta=(1~2)\in S_2$.
\subsection{$F_4$}
Here we will use the notations used in Section 7.
We work with the basis
of $\mathfrak{C}$ i.e $\{v_1,...,v_8\}$
as in Section 7.
We reorder this basis as $e_1=v_1,e_2=v_2,e_3=v_3,e_4=v_4,e_5=v_8,e_6=v_7,e_7=v_6,e_8=v_5$ so  that with respect to the new ordered basis
$\{e_1,...,e_8\},$
 the matrix of the bilinear form associated with the norm $N$ (see section 8) of $\mathfrak{C}$ becomes
$$\begin{bmatrix}
   0&I\\
I&0
  \end{bmatrix}.$$
 Also, the Cartan subalgebra of $\mathfrak{so}(N)$ is in the diagonal form with  respect to the above bilinear form,
i.e. $\mathfrak{t}\subset \mathfrak{so}(N)$ will consist of all diagonal matrices of the form
$(a_1,...,a_4,-a_1,...,-a_4),$ $a_i\in k.$ Henceforth we shall parametrize this diagonal matrix as $(a_1,a_2,a_3,a_4),$ $a_i\in k.$
The Cartan subalgebra of $\mathfrak{f}_4$ is contained in a copy of the Lie algebra of $Spin(N),$ i.e.
$\mathfrak{spin}(N)\cong \mathbf{L}(RT(\mathfrak{C})),$ where
$\mathbf{L}(RT(\mathfrak{C}))=\{(t_1,t_2,t_3)\in \mathfrak{so}(8)^3|t_1(xy)=t_2(x)y+xt_3(y),x,y\in\mathfrak{C}\}.$
It is known that $\mathfrak{so}(N)$ is generated as a vector space by $t_{a,b}$, $a,b\in \mathfrak{C}$; $t_{a,b}$
is defined as $t_{a,b}(x)= \langle x,a\rangle b-\langle x,b \rangle a$ for $x\in \mathfrak{C}$
where $\langle ,\rangle$ is the bilinear form of the norm $N$ (\textbf{[SV]}, Chapter 3).

If $t_1=t_{a,b},$ then $t_2=1/2(l_bl_{\overline{a}}-l_al_{\overline{b}})$ and 
$t_3=1/2(r_br_{\overline{a}}-r_ar_{\overline{b}})$ satisfy the property,
\begin{align}{\label{c}}t_1(xy)=t_2(x)y+xt_3(y).\end{align}
Also note that if $(t_1,t_2,t_3)$ and $(s_1,s_2,s_3)$ are related triples (in the Lie algebra sense) then so is
$(t_1+s_1,t_2+s_2,t_3+s_3)$. With this, we can now carry out the computation.

Let $t_1=(a_1,a_2,a_3,a_4)$. Then by a direct computation using the multiplication table for the 
basis $\{v_i\}$ in Section 8  and (\ref{c}), 
one can show that
  $t_1=\sum_{i=1}^4 t_{x_i,y_i}$, where 
$x_i,y_i $ are given by $x_i=a_i(e_i+e_{4_i})$ and $y_i=(e_i-e_{4+i})/2$. Using this, the above formulas for $t_2$ and $t_3$ and the
 multiplication table for the $v_i^,s$ (see section 8), we get,
\begin{align*}
 &t_1=(a_1,a_2,a_3,a_4)\\
&t_2=((a_1+a_2+a_3-a_4)/2,(a_1+a_2-a_3+a_4)/2,\\&(a_1-a_2+a_3+a_4)/2,(-a_1+a_2+a_3+a_4)/2)\\
&t_3=((a_1-a_2-a_3+a_4)/2,(-a_1+a_2-a_3+a_4)/2,\\&(-a_1-a_2+a_3+a_4)/2,(a_1+a_2+a_3+a_4)/2)
\end{align*}
Also note that if $t=(a_1,a_2,a_3,a_4)$ then $\hat{t}=(-a_1,a_2,a_3,a_4).$ This is evident from the fact that $\bar{e_1}=e_5$ and
$\bar{e_i}=-e_i$ whenever $i\neq 1,5$ and the definition of $\hat{t}$ i.e. $\hat{t}(x)=\overline{t(\bar{x})},$ $x\in \mathfrak{C}.$
 We refer to the above set of equations by (A). Recall that the Weyl group of $F_4$ is $W\cong WSpin(N)\rtimes S_3\cong 
((\mathbb{Z}/2)^3\rtimes S_4)\rtimes S_3$ and the action of $W$ on $\mathbf{L}RT(\mathfrak{C})$ is given by (\ref{a}).

We now calculate the stabilizers of elements of $\mathbf{L}(RT(\mathfrak{C}))$ in $W,$ the arguments being similar to those for the group
$F_4.$

\noindent\textbf{(1)} By (A),
\begin{align*}
 t_1=t_2=t_3=0
\end{align*}
 Then clearly $W_t=WF_4.$

\noindent\textbf{(2)} If

$t_1=(0,0,0,a_4),$ then by (A),
\begin{align*}
&t_2=(-a_4/2,a_4/2,a_4/2,a_4/2)\\
&t_3=(a_4/2,a_4/2,a_4/2,a_4/2) 
\end{align*}
Here we observe that only $\tau_1$ fixes $t$ since $t_2,t_3$ do not have $0$ as a parameter, no other element from $S_3=Out(Spin(N))$
can contribute to the isotropy (see \ref{a}). Thus
$W_t=((\mathbb{Z}/2)^2\rtimes S_3)\rtimes \{1,\tau_1\}.$

\noindent\textbf{(3)} If $t_1=(0,0,a_3,a_3),$ then by $(A),$
\begin{align*}
 t_1=t_2=t_3=(0,0,a_3,a_3)
\end{align*}
Therefore, $\hat{t_1}=\hat{t_2}=\hat{t_3}.$ Hence all of $S_3=Out(Spin(N))$ fixes $t$ (see \ref{a}). Therefore,
$W_t=((\mathbb{Z}/2\rtimes S_2)\times S_2)\rtimes S_3$ 

\noindent\textbf{(4)} If
 $t_1=(0,0,a_3,a_4),$ then by (A)
\begin{align*}
&t_2=\hat{t_3}=((a_3-a_4)/2,(a_4-a_3)/2,(a_3+a_4)/2,(a_3+a_4)/2).
\end{align*} We have,
$W_t=(\mathbb{Z}/2\rtimes S_2)\rtimes \{1,\tau_1\},$ because apart from $\tau_1$ any other element of $S_3$ sends $t_2$ or $t_3$
to the first position (see \ref{a}) and hence they cannot fix $t$.

\noindent\textbf{(5)} If $ t_1=(0,a_2,a_2,a_2),$ then by (A),
\begin{align*}
&t_2=\hat{t_3}=(a_2/2,a_2/2,a_2/2,a_2/2).
\end{align*}
 Since $t_2=\hat{t_3},$ only $\tau_1\in S_3$ appears in the isotropy subgroup (see \ref{a}). Therefore, $W_t=S_3\rtimes\{1,\tau_1\}.$

\noindent\textbf{(6)} If
$t_1=(0,a_2,a_2,a_4),$ then by (A),
\begin{align*}
&t_2=\hat{t_3}=((2a_2-a_4)/2,a_4/2,a_4/2,(2a_2+a_4)/2).
\end{align*}
We have, $W_t=S_2\rtimes \{1,\tau_1\}$ if $2a_2\neq a_4$ and if $a_4=2a_2$ then $t_1=t_2=t_3$ and $S_3$ will clearly fixes
 $t$ (see \ref{a}). Hence $W_t=S_2\rtimes S_3.$

\noindent\textbf{(7)} If
$t_1=(0,a_2,a_3,a_4),$ then by (A),
\begin{align*}
&t_2=((a_2+a_3-a_4)/2,(a_2-a_3+a_4)/2,(-a_2+a_3+a_4)/2,(a_2+a_3+a_4)/2)\\
&t_3=\hat{t_2}
\end{align*}
If $t_2,t_3$ does not contain $0$ as a parameter, then $W_t=\{1,\tau_1\}\subset S_3$ since any other element of $S_3$ removes $t_1$ from 
the first position of the related triple by \ref{a}.
 Otherwise, let $a_2+a_3-a_4=0$, then by (A), $t_1=t_2=t_3$ and therefore, $S_3$ stabilizes $t.$ In this case, $W_t=
\{1\}\rtimes S_3$. For the other three possibilities the related triple can be made Weyl group equivalent to the latter by a suitable 
permutation of $a_2,a_3,a_4.$

\noindent\textbf{(8)} If 
$t_1=(a_1,a_1,a_3,a_4),$  then by (A),
\begin{align*}
&t_2=((2a_1+a_3-a_4)/2,(2a_1-a_3+a_4)/2,(a_3+a_4)/2,(a_3+a_4)/2)\\
&t_3=((-a_3+a_4)/2,(-a_3+a_4)/2,(-2a_1+a_3+a_4)/2,(2a_1+a_3+a_4)/2)
\end{align*} We have
$W_t=S_2\subset WSpin(N),$ since every element of $S_3$ other than $1,$ acts non trivially on $t$ (see \ref{a}).

\noindent\textbf{(9)} If
$t_1=(a_1,a_1,a_1,a_4),$ then by (A)
\begin{align*}
&t_2=((3a_1-a_4)/2,(a_1+a_4)/2,(a_1+a_4)/2,(a_1+a_4)/2)\\
&t_3=((-a_1+a_4)/2,(-a_1+a_4)/2,(-a_1+a_4)/2,(3a_1+a_4)/2).
\end{align*} We have,
$W_t=S_3\subset Spin(N)$ because $a_1\neq a_4$ and hence only elements from $WSpin(N)$ fixes $t$ (see \ref{a}).

\noindent\textbf{(10)} If
$t_1=(a_1,a_2,a_3,a_4),$ then by (A),
\begin{align*}
&t_2=((a_1+a_2+a_3-a_4)/2,(a_1+a_2-a_3+a_4)/2,\\&(a_1-a_2+a_3+a_4)/2,(-a_1+a_2+a_3+a_4)/2)\\
&t_3=((a_1-a_2-a_3+a_4)/2,(-a_1+a_2-a_3+a_4)/2,\\&(-a_1-a_2+a_3+a_4)/2,(a_1+a_2+a_3+a_4)/2)
\end{align*}
Here, the isotropy subgroup is trivial if none of the $t_i^,$s contain $0$ as parameter, because in that case all non trivial elements
 of $S_3$
 act non trivially on
$(t_1,t_2,t_3)$ (see \ref{a}).

Hence there are 12 conjugacy classes of isotropy subgroups in the Weyl group.

We conclude this section by collecting the results for Lie algebras in the following table:
\begin{center}
 \begin{tabular}{|l|p{3cm}|p{6 cm}|p{5cm}|}
\hline
Lie algebra&Weyl group&Stabilizers&number of orbit types \\ \hline
$A_n$&$S_{n+1}$&$S_{n_1}...S_{n_k}$  for a partition $n_1,...,n_k$ of $n+1$&$p(n+1)$\\ \hline
$B_n$&$(\mathbb{Z}/2)^n\rtimes S_n$&$((\mathbb{Z}/2)^{n_1-1}\rtimes S_{n_1})\times S_{n_2}\times...\times S_{n_k}$&
$\sum_{i=0}^n\quad p(n-i)$\\ \hline
$C_n$&$(\mathbb{Z}/2)^n\rtimes S_n$&$((\mathbb{Z}/2)^{n_1}\rtimes S_{n_1})\times S_{n_2}\times...\times S_{n_k}$&
$\sum_{i=0}^n\quad p(n-i)$\\ \hline
$D_n$for $n$odd&$(\mathbb{Z}/2)^{n-1}\rtimes S_n$&$((\mathbb{Z}/2)^{n_1-1}\rtimes S_{n_1})\times S_{n_2}\times...\times S_{n_k}$&
$\sum_{i=0}^n\quad p(n-i)$\\ \hline
$D_n$ for $n=2k$&$(\mathbb{Z}/2)^{n-1}\rtimes S_n$&$((\mathbb{Z}/2)^{n_1-1}\rtimes S_{n_1})\times S_{n_2}\times...\times S_{n_k}$,
and for each partition $k_1,..,k_s$ of $k,$ $H_{2k_1}.S_{2k_2}...S_{2k_s},$ where $H_{2k_1}$ is a
 subgroup of order $(2k_1)!$ not conjugate to $S_{2k_1}$.&$\sum_{i=0}^n\quad p(n-i)+p(k) $\\ \hline
$G_2$&$S_3\rtimes S_2$&refer to the discussion above&$4$\\ \hline
$F_4$&$((\mathbb{Z}/2)^3\rtimes S_4)\rtimes S_3$&refer to the discussion above&$12$\\ \hline

\end{tabular}

\end{center}


\textbf{Acknowledgements:}
I thank Maneesh Thakur for suggesting me the problem and all the discussions I had with him and Anupam Singh from IISER,
Pune, who had lent me some of his valuable time in discussing this work. 
I also thank Dipendra Prasad from TIFR, Bombay, for his encouragement. I also thank Professor Donna Testerman from EPFL, Lausanne, for some
extremely crucial  
comments and suggestions on the manuscript.

\end{document}